\newtheorem{thm}{Theorem}[section]
\newtheorem{prop}[thm]{Proposition}
\newtheorem{lem}[thm]{Lemma}
\newtheorem{cor}[thm]{Corollary}
\theoremstyle{definition}
\newtheorem{defn}[thm]{Definition}
\theoremstyle{remark}
\newtheorem{remk}[thm]{Remark}
\newtheorem{remks}[thm]{Remarks}
\newtheorem{exm}[thm]{Example}
\newtheorem{exms}[thm]{Examples}
\newtheorem{notat}[thm]{Notation}
\numberwithin{equation}{section}
\newenvironment{rem}{\begin{remk}}%
{\hfill$\square$\end{remk}}
{\hfill$\square$\end{remks}}
{\hfill$\square$\end{exm}}
{\hfill$\square$\end{exms}}
{\hfill$\square$\end{notat}}
\newcommand{\thmref}{Theorem~\ref}
\newcommand{\propref}{Proposition~\ref}
\newcommand{\corref}{Corollary~\ref}
\newcommand{\lemref}{Lemma~\ref}
\newcommand{\sC}{{\mathcal C}}
\newcommand{\sE}{{\mathcal E}}
\newcommand{\sF}{{\mathcal F}}
\newcommand{\sI}{{\mathcal I}}
\newcommand{\sK}{{\mathcal K}}
\newcommand{\sO}{{\mathcal O}}
\newcommand{\A}{{\mathbb A}}
\newcommand{\F}{{\mathbb F}}
\newcommand{\N}{{\mathbb N}}
\renewcommand{\P}{{\mathbb P}}
\newcommand{\Q}{{\mathbb Q}}
\newcommand{\W}{{\mathbb W}}
\newcommand{\Z}{{\mathbb Z}}
\newcommand{\fm}{{\mathfrak m}}
\newcommand{\fn}{{\mathfrak n}}
\newcommand{\fp}{{\mathfrak p}}
\newcommand{\CH}{{\rm CH}}
\newcommand{\surj}{\twoheadrightarrow}
\newcommand{\inj}{\hookrightarrow}
\newcommand{\Hom}{{\rm Hom}}
\newcommand{\Spec}{{\rm Spec \,}}
\newcommand{\Sch}{{\operatorname{\mathbf{Sch}}}}
\newcommand{\Sm}{{\mathbf{Sm}}}
\newcommand{\cyc}{{\operatorname{\rm cyc}}}
\newcommand{\ds}{{/\kern-3pt/}}
\newcommand{\un}{\underline}
\newcommand{\ov}{\overline}
\renewcommand{\dim}{\text{\rm dim}}
\newcommand{\tuborg}{\left\{\begin{array}{ll}}
\newcommand{\sluttuborg}{\end{array}\right.}
\newcommand{\sfs}{{\rm sfs}}
\newcommand{\Tz}{{\rm Tz}}
\newcommand{\dlog}{{\rm dlog}}
\newcommand{\TCH}{{\rm TCH}}
\newcommand{\wt}{\widetilde}
\newcommand{\wh}{\widehat}
\newcounter{elno}
\newcounter{elno-abc}   
\newcounter{elno-abc-prime}
\begin{document}
\title{Relative $K$-theory via 0-cycles in finite
characteristic}
\author{Rahul Gupta, Amalendu Krishna}
\address{Fakult\"at f\"ur Mathematik, Universit\"at Regensburg, 
93040, Regensburg, Germany.}
\email{Rahul.Gupta@mathematik.uni-regensburg.de}
\address{Department of Mathematics, Indian Institute of Science, Bangalore, 560012, India.}
\email{amalenduk@iisc.ac.in}


\keywords{Algebraic cycles, additive higher Chow groups,
relative $K$-theory}        

\subjclass[2010]{Primary 14C25; Secondary 19E08, 19E15}

\maketitle

\begin{quote}\emph{Abstract.}  
Let $R$ be a regular semi-local ring, essentially of finite type over an infinite
perfect field of characteristic $p > 0$.
We show that the known cycle class map from the Chow group of 0-cycles with 
modulus to the relative $K$-theory 
induces a pro-isomorphism between the additive higher Chow groups of
relative 0-cycles and the relative $K$-theory of truncated polynomial
rings over $R$. This settles the problem of
completely describing the relative $K$-theory of such rings
via the cycle class map.
\end{quote}
\setcounter{tocdepth}{1}
\tableofcontents  


\section{Introduction}\label{sec:Intro}
Ever since the invention of additive Chow groups and higher Chow groups 
with modulus, it has been an open question whether these groups
together would give rise to a motivic cohomology which could describe the
algebraic $K$-theory of non-reduced schemes. Existence of such a
motivic cohomology was conjectured in the seminal work of
Bloch and Esnault \cite{BE2}.

There are usually two ways to solve this question; either construct a direct
cycle class map from the Chow groups with modulus to relative $K$-theory,
or, construct an Atiyah-Hirzebruch type spectral sequence.
For smooth schemes, both approaches have been shown to be successful
in describing the algebraic $K$-theory in terms of algebraic cycles.  
However, this question remains unsolved for singular schemes.

In \cite{Levine-1}, Levine constructed cycle class maps with rational
coefficients from Bloch's higher Chow groups \cite{Bloch-1} to the 
algebraic $K$-groups of a smooth scheme over a field. He showed that these maps 
induce isomorphisms between the higher Chow groups and
the Adams graded pieces of the algebraic $K$-groups of the scheme. 

Motivated by Levine's work, the authors constructed in \cite{GK} an explicit cycle
class map (with integral coefficients) from the higher Chow groups
of 0-cycles with modulus to the relative $K$-theory in the setting of 
pro-abelian groups. The main result of \cite{GK} was that this 
cycle class map induces a pro-isomorphism between the additive 
higher Chow groups of
relative 0-cycles and relative $K$-theory of truncated polynomial
rings over a regular semi-local
ring, essentially of finite type over a characteristic zero field.
The goal of this manuscript is to extend this result to
positive characteristic.

To state our main result, recall from \cite{BS} and \cite{KP} that
for a smooth scheme $X$ of dimension $d$ which is essentially of finite
type over a field $k$ and an effective Cartier divisor $D \subset X$,
the higher Chow groups of codimension $q$-cycles with modulus are denoted by
$\CH^{q}(X|D; n)$. Let $K(X,D)$ denote the relative $K$-theory spectrum.
In order to study the relative algebraic $K$-theory in terms of 0-cycles with
modulus, it was shown in \cite{GK} that 
there exists a cycle class map 
\begin{equation}\label{eqn:CCM}
cyc_{X|D} \colon \left\{\CH^{n+d}(X|m D; n)\right\}_{m \ge 1} \to \left\{K_n(X,mD)\right\}_{m \ge 1}
\end{equation}  
in the setting of pro-abelian groups.  
This cycle class map coincides with that of Levine when $D = \emptyset$.

Recall now that for an equi-dimensional 
scheme $X$ over $k$, the Chow group with modulus
$\CH^q(X \times \A^1_k| X \times {(m+1)}\{0\},n)$ 
is the same thing as the additive higher Chow group 
of codimension $q$-cycles $\TCH^q(X,n+1;m)$ (see \cite{KP-1}). 
Applying ~\eqref{eqn:CCM} to $X = \Spec(k)$
and using the natural connecting isomorphism 
$\partial \colon K_{n+1}({k[x]}/{(x^{m})}, (x)) \xrightarrow{\cong} 
K_n(\A^1_k, m \{0\})$, we see that ~\eqref{eqn:CCM} is the same thing as the map
\begin{equation}\label{eqn:CCM-k}
cyc_{k} \colon \left\{\TCH^{n+1}(k, n+1;m)\right\}_{m \ge 1} \to 
\left\{K_{n+1}({k[x]}/{(x^{m})}, (x))\right\}_{m \ge 1}.
\end{equation}  

The main property of $cyc_{k} $ is that its definition is very explicit on the set of 
generators $\Tz^{n+1}(k, n+1;m)$ (see \S~\ref{sec:CCM}).
This property of $cyc_k$ often turns out to be very
useful in the study of $K$-theory via algebraic cycles.
If $R$ is, more generally, a regular semi-local ring containing $k$, the map $cyc_k$ directly extends
to an explicit cycle class map 
\begin{equation}\label{eqn:CCM-R}
cyc_R \colon \Tz^{n+1} _{\sfs}(R, n+1;m) \to  K_{n+1}(R[x]/(x^{m+1}), (x)),
\end{equation}
 where 
$ \Tz^{n+1} _{\sfs}(R, n+1;m) \subset  \Tz^{n+1}(R, n+1;m)$ is a subgroup of `sfs' cycles (see \S~\ref{sec:Rel-cyc} for the definition of sfs cycles and \S~\ref{sec:CCM-R-0}
for the definition of $cyc_R$). 

The initial motivation behind the discovery of additive higher Chow groups 
by Bloch and Esnault \cite{BE2} was to know if a cycle class map such as $cyc_R$
passes through rational equivalence and if the resulting map is an isomorphism.
The main objective of this paper is to provide the following partial answer to the
question of Bloch and Esnault.
In fact, we construct a new cycle class map in the pro-setting and show that it is an isomorphism when
$k$ is any perfect field. We subsequently show that this new cycle class map coincides with
$cyc_R$ of ~\eqref{eqn:CCM-R} when $k$ is furthermore infinite.

\begin{thm}\label{thm:Main-0}
Let $R$ be a regular semi-local ring which is essentially of finite type over a field $k$ 
such that ${\rm char}(k) > 0$. 
Let $n \ge 1$ be an integer. Then there exists a cycle class map
\[
cyc'_R \colon \left\{\TCH^{n}(R, n;m)\right\}_{m \ge 1} \to 
\left\{K_{n}({R[x]}/{(x^{m})}, (x))\right\}_{m \ge 1}.
\]
This map satisfies the following.
\begin{enumerate}
\item
  $cyc'_R$ is natural  in $R$.
\item
  $cyc'_R$ is injective.
\item
$cyc'_R$ is an isomorphism if $k$ is perfect.
\item
  The composite map
  \[
    \left\{\Tz^{n} _{\sfs}(R, n;m)\right\}_{m \ge 1} \to
    \left\{\TCH^{n}(R, n;m)\right\}_{m \ge 1} \xrightarrow{cyc'_R}
    \left\{K_{n}({R[x]}/{(x^{m})}, (x))\right\}_{m \ge 1}
  \]
  coincides with $cyc_R$ if $R = k$ or $k$ is infinite. 
\end{enumerate}
\end{thm}

Using \cite{GK} in characteristic zero and \thmref{thm:sfs-lemma} otherwise, we obtain the
following.

\begin{cor}\label{cor:Main-0-0}
  Let $R$ be a regular semi-local ring which is essentially of finite type over an infinite
  perfect field. Then $cyc_R$ induces an isomorphism
  \[
 cyc_R \colon \left\{\TCH^{n}(R, n;m)\right\}_{m \ge 1} \to 
\left\{K_{n}({R[x]}/{(x^{m})}, (x))\right\}_{m \ge 1}.   
\]
\end{cor}

We remark that the only hurdle in extending the above corollary to finite base fields is the
lack of sfs-moving lemma. The proof of this moving lemma given in \cite{KP-3} breaks down
if the base field is finite. However, we expect that the new Bertini theorems of 
\cite{Ghosh-Krishna} may be enough to prove the sfs-moving lemma over finite base fields.

\vskip .3cm

For a semi-local ring $R$, let $K^M_*(R)$ denote the Milnor $K$-theory of $R$. 
When $R$ has a finite residue field, 
$K^M_*(R)$ is taken to be the one defined by Gabber (unpublished) and
Kerz \cite{Kerz10}.
Let $K^M_n({R[x]}/{(x^{m})}, (x))$ denote the kernel of the
canonical restriction map $K^M_n({R[x]}/{(x^{m})}) \to K^M_n(R)$.
Unless $R$ is local, there may not exist a natural map
from the Milnor $K$-theory ({\`a} la Gabber-Kerz) to the Quillen $K$-theory of
${R[x]}/{(x^{m})}$. Nonetheless, we show in this manuscript 
(see \corref{cor:Milnor-Quillen-map}) that
there is indeed a natural map
of pro-abelian groups
\[
\psi_R \colon  \left\{{K}^M_n({R[x]}/{(x^{m})})\right\}_{m \ge 1} \to 
 \left\{K_n({R[x]}/{(x^{m})})\right\}_{m \ge 1}.
\]
This induces a natural map between the pro-relative $K$-groups as well.
Furthermore, this restricts to the known canonical map when we
replace $R$ by any of its localizations.
The main step in the proof of \thmref{thm:Main-0} (except its last part) is the following
extension of \cite[Theorem~1.3(1)]{GK} to positive characteristic.

\begin{thm}\label{thm:Main-1}
Let $R$ be a regular semi-local ring which is essentially of finite type over a field of characteristic $p > 0$.
Let $n \ge 1$ be an integer. Then there exists a cycle class map
\[
cyc^M_R \colon \left\{\TCH^{n}(R, n;m)\right\}_{m \ge 1} \to 
\left\{K^M_{n}({R[x]}/{(x^{m})}, (x))\right\}_{m \ge 1}
\]
which is natural in $R$ and is an isomorphism.
\end{thm}

 The cycle class map $cyc'_R$ in \thmref{thm:Main-0} is, by definition, the 
 composition $\psi_R \circ cyc^M_R$.
We remark that by a result of Morrow \cite{Morrow} (which implicitly
uses \thmref{thm:Milnor-final} of this paper, see the proof of
\propref{prop:M-Q-pro}), one knows that the canonical map
from the relative Milnor $K$-theory to the relative Quillen $K$-theory
is a pro-isomorphism when $R$ is local. However, there are two points
to be noted regarding \thmref{thm:Main-1}. First, the pro-isomorphism
between the relative Milnor $K$-theory and the Quillen $K$-theory for
semi-local rings is not a straightforward deduction from the local
case.
Second, and more important, \thmref{thm:Main-1}, along with 
\thmref{thm:Main-0}(4), asserts that
the cycle class map $cyc_R$ (whose study is our main interest)
from the additive higher Chow groups to the
relative Quillen $K$-theory in ~\eqref{eqn:CCM-R}
factors through the relative Milnor $K$-theory if the base field if infinite and perfect.
This plays a very important role in understanding the cycle class
map and in our proof. A similar result in characteristic zero was proven in
\cite{GK}.

\vskip .2cm

A fundamental fact 
in Voevodsky's theory of motivic cohomology is that 
if $R$ is an equi-characteristic regular semi-local ring, then
its motivic cohomology in the equal bi-degree (the Milnor range) 
is isomorphic to the Milnor $K$-theory of $R$
(see \cite{EM}, \cite{NS} and \cite{Totaro}).
\thmref{thm:Main-1} (3) says that this isomorphism
also holds for truncated polynomial algebras over such rings.
This provides a key evidence that if one could extend 
Voevodsky's theory of motives to so-called
fat points (infinitesimal extensions of spectra of fields), then
the underlying motivic cohomology groups must coincide with the additive higher 
Chow groups, at least in the setting of pro-abelian groups (see \cite{KP-5}).

\vskip .3cm

\subsection{A brief outline of the proofs}\label{sec:Outline}
Our strategy for proving \thmref{thm:Main-0} (except part (4)) is to define the cycle class map 
$cyc_R$ as the composition of $\psi_R$ with a cycle class map $cyc^M_R$, to 
the relative Milnor $K$-theory of the truncated polynomial ring over the underlying regular
semi-local ring $R$. The two known results that are used to achieve this are
the `Chow-Witt isomorphism theorems' of R{\"u}lling \cite{R} and Krishna-Park \cite{KP-2} and the 
`Milnor-Witt isomorphism theorem' of R{\"u}lling-Saito \cite{RS}.
But these two results are not quite enough. 

Going beyond, a fundamental result of independent interest
that we need to prove is that there is a 
pro-isomorphism between the Milnor $K$-theories of R{\"ulling}-Saito \cite{RS},
Kato-Saito \cite{Kato-Saito} and Gabber-Kerz \cite{Kerz10}
(see \thmref{thm:Milnor-final}).
This allows us to define a cycle class map from the additive higher Chow groups to the relative
Milnor $K$-theory of the truncated polynomial ring (see \S~\ref{sec:CCM-MK}).
This map is easily seen to be an isomorphism by its construction.

The next step is to show the existence of a canonical map
from the relative Milnor $K$-theory to relative Quillen $K$-theory
of truncated polynomial rings over a regular semi-local ring
(see Corollary~\ref{cor:Milnor-Quillen-map}). This requires 
care if the base field is finite and the ring is not local.
The third step is show that 
the
above described map 
is a pro-isomorphism. This is done in
\propref{prop:M-Q-pro} using \thmref{thm:Milnor-final} and a result of Morrow 
\cite[Corollary~5.5]{Morrow}.

The final step to explicitly describe this composition when $R$  is a field or is 
essentially of finite type over an infinite field. Under these assumptions, 
we define an explicit  cycle class map to the relative Quillen $K$-theory of 
truncated polynomials in \S~\ref{sec:CCM} and 
\S~\ref{sec:CCM-R*}. 
The proof of \thmref{thm:Main-0} is then completed in 
 \lemref{lem:Gen-commute}. 

In all the above steps, we have to pay special care if the underlying
regular semi-local ring is not local.
This is because the Gabber-Kerz Milnor $K$-theory is not very well
behaved for such rings. This forces us to work with
their sheafified versions. But this brings in another technical problem.
Namely, a local isomorphism between two pro-sheaves does not 
necessarily imply an isomorphism between them. This is in contrast to the case of sheaves.
To take care of this problem, we always have to
prove a stronger assertion  than merely an isomorphism whenever
we work with pro-abelian groups of $K$-theories and de Rham-Witt forms
on a local ring (see \lemref{lem:pro-sheaves} for a precise version we
need to prove).

\section{The cycle class map}\label{sec:Prelim}
In this section, we shall recall our
main object of study, the cycle class map for 
the additive higher Chow groups of 0-cycles, from \cite{GK}.
Before this, we shall fix our notations, recall the basic
definitions and prove some intermediate results to be used in the proofs
of the main theorems. 

\subsection{Notations}\label{sec:Notations}
Given a field $k$, we let $\Sch_k$ denote the category of separated
finite type schemes over $k$ and let $\Sm_k$ denote the full subcategory of 
non-singular schemes over $k$.
For $X, Y \in \Sch_k$, we shall denote the product $X \times_k Y$ simply by 
$X \times Y$.
For any point $x \in X$, we shall let $k(x)$ denote the residue field of $x$.
For a reduced scheme $X \in \Sch_k$, we shall let $X^N$ denote the 
normalization of $X$.
Given a closed immersion $D \subset X$
in $\Sch_k$ defined by a sheaf of ideals $\sI_D \subset \sO_X$, 
we shall let $mD \subset X$ denote the closed subscheme
of $X$ defined by the sheaf of ideals $\sI^m_D$ for $m \ge 1$. 
In this article, we shall always consider the Zariski topology on
a Noetherian scheme whenever we talk about sheaves.

We shall let $\ov{\square}$ denote the projective space $\P^1_k = {\rm Proj}(k[Y_0, Y_1])$ and let
$\square = \ov{\square} \setminus \{1\}$. We shall let $\A^n_k  = \Spec(k[y_1, \ldots , y_n])$ be the
open subset of $\ov{\square}^n$, where $(y_1, \ldots, y_n)$ denotes the coordinate system of 
$\ov{\square}^n$ with $y_j = {Y^j_1}/{Y^j_0}$.  
Given a rational map $f \colon X \dashedrightarrow \ov{\square}^n$ in $\Sch_k$ and a
point $x \in X$ lying in the
domain of definition of $f$, we shall let
$f_i(x) = (y_i \circ f)(x)$, where $y_i \colon \ov{\square}^n \to \ov{\square}$ is the $i$-th projection.
For any $1 \le i \le n$ and $t \in \ov{\square}(k)$, we let $F^t_{n,i}$ denote the closed subscheme of 
$\ov{\square}^n$ given by $\{y_i = t\}$. We let $F^t_n = \stackrel{n}{\underset{i =1}\sum} F^t_{n,i}$.

All rings in this text will be commutative and Noetherian. For such a ring $R$ and an
integer $m \ge 0$, we shall let $R_m = {R[t]}/{(t^{m+1})}$ denote the 
truncated polynomial algebra over $R$.
If in proving a statement in this manuscript, we have to deal with  a ring $R$ and an ideal $I \subset R$,
we shall use the notation $\ov{a}$ for the residue class in $R/I$ of an element $a \in R$. 
If there are several ideals, we shall indicate the quotient in which we consider the residue class
$\ov{a}$.
The tensor product $M \otimes_{\Z} N$ will be denoted simply as $M \otimes N$.
Tensor products over other bases will be explicitly indicated.
For an abelian group $M$ and an integer $n \ge 1$, we let $_n M$ denote the
$n$-torsion subgroup of $M$. 

\subsection{The category of pro-objects}\label{sec:Pro}
Since we shall mostly work in the category of pro-abelian groups, we recall
here the notion of pro-objects in a general category.
By a pro-object in a category $\sC$, we shall mean a sequence of objects $\{A_m\}_{m \ge 0}$
together with a map $\alpha^A_m \colon A_{m+1} \to A_m$ for each $m \ge 0$. We shall write this object
often as $\{A_m\}$. We let ${\rm Pro}(\sC)$ denote the category of pro-objects in $\sC$ with the
morphism set given by
\begin{equation}\label{eqn:Pro-0}
\Hom_{{\rm pro}(\sC)}(\{A_m\}, \{B_m\}) = {\underset{n}\varprojlim} \ 
{\underset{m}\varinjlim}\ \Hom_{\sC}(A_m, B_n).
\end{equation}

It follows that giving a morphism $f \colon \{A_m\} \to \{B_m\}$ 
in ${\rm Pro}(\sC)$ is equivalent to finding a function  
$\lambda \colon \N \to \N$, a map $f_n \colon A_{\lambda(n)} \to B_n$ for each $n \ge 0$ such that 
for each $n' \ge n$, there exists $l \ge \lambda(n), \lambda(n')$ so that the diagram
\begin{equation}\label{eqn:Pro-1}
\xymatrix@C.8pc{
A_l \ar[r] \ar[dr] & A_{\lambda(n')} \ar[r]^-{f_{n'}} & B_{n'} \ar[d] \\
& A_{\lambda(n)} \ar[r]^-{f_n} & B_n}
\end{equation}
is commutative, where the unmarked arrows are the structure maps of $\{A_m\}$ and $\{B_m\}$.
We shall say that $f$ is strict if $\lambda$ is the identity function and $l = \lambda(n') = n' $. 
If $\sC$ admits all sequential limits, we shall denote the limit of $\{A_m\}$ by 
${\underset{m}\varprojlim} \ A_m \in \sC$. If $\sC$ is an abelian category, then
so is ${\rm Pro}(\sC)$. We refer the reader to \cite[Appendix~4]{AM} for further details about
${\rm Pro}(\sC)$.

Let $\sC$ be  an abelian category and let $\{A_m\}_m$ be a pro-object
in $\sC$. We shall say that $\{A_m\}$ is bounded by an integer
$N \in \N$ if the structure map $A_{m+N} \to A_m$ is zero for all
$m \ge 0$. A pro-object $\{A_m\}$ which is bounded by an integer is classically also called
AR-zero (Artin-Rees zero), see
 \cite[Expos{\'e}~ V, Definition~ 2.2.1]{SGA5}.
 We shall say that $\{A_m\}$ is bounded by $\infty$ if
$\{A_m\} = 0$ in ${\rm Pro}(\sC)$.

Let $X$ be a Noetherian scheme. By a sheaf (or pre-sheaf) of pro-abelian
groups on $X$, we shall mean a pro-object in the abelian category of
sheaves (or pre-sheaves) of abelian groups on $X$. We caution the reader
that if $\{\sF_m\}$ is a sheaf of pro-abelian groups such that
the pro-abelian group of stalks $\{\sF_{m,x}\}$ is zero for all
$x \in X$, then we can not in general conclude that $\{\sF_m\}$
is zero. However, the following is still true and will be used 
repeatedly in this article.

\begin{lem}\label{lem:pro-sheaves}
Let $\{\sF_m\}$ be a sheaf of pro-abelian groups on a Noetherian scheme
$X$. Suppose that for every integer $m \ge 0$, there is an integer
$N(m) \ge 0$ such that the structure map
$\sF_{m + N(m),x} \to \sF_{m,x}$ is zero for all $x \in X$.
Then $\left\{\sF_{m}\right\} = 0$. If 
there is an integer $N \ge 0$ 
such that $\{\sF_{m,x}\}$ is bounded by $N$ for all $x \in X$,
then $\{\sF_m\}$ is bounded by $N$. 

If $f \colon \{\sF_m\} \to \{\sF'_m\}$ is an isomorphism of
sheaves of pro-abelian groups on $X$, then the morphism of
pro-abelian groups $H^i(f) \colon \{H^i(X,\sF_m)\} \to  \{H^i(X,\sF'_m)\}$
is an isomorphism for all $i \ge 0$.
\end{lem}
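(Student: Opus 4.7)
The plan is to reduce each assertion to the elementary fact that a morphism of sheaves of abelian groups on a space is zero if and only if every one of its stalk maps vanishes.

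For the first assertion, fix $m \ge 0$. The hypothesis says that the stalk of the transition morphism $\sF_{m+N(m)} \to \sF_m$ is zero at every $x \in X$, so the transition itself is the zero morphism of sheaves on $X$. In particular, for each fixed $x$ the pro-abelian group $\{\sF_{m,x}\}$ is zero, proving the stated conclusion; moreover, the same argument applied at every $m$ yields $\{\sF_m\} = 0$ in the pro-category of sheaves as well. The second assertion is a verbatim repetition with the uniform choice $N(m) = N$: for every $m$ the sheaf map $\sF_{m+N} \to \sF_m$ vanishes on all stalks, hence vanishes as a morphism of sheaves, which is precisely the statement that $\{\sF_m\}$ is bounded by $N$.

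For the third assertion, I would exploit the levelwise functoriality of the pro-construction. An isomorphism $f \colon \{\sF_m\} \to \{\sF'_m\}$ in the pro-category of sheaves of abelian groups admits a two-sided inverse $g$, presented by a re-indexing function and a compatible system of sheaf morphisms as in~\eqref{eqn:Pro-1}. Applying the functor $H^i(X, -)$ to each component morphism of $f$ and $g$ (and keeping the re-indexing functions unchanged) produces morphisms $H^i(f)$ and $H^i(g)$ of pro-abelian groups. Since composition in ${\rm Pro}(\sC)$ is computed componentwise and $H^i(X,-)$ is a functor, the identities $g \circ f = \id$ and $f \circ g = \id$ are preserved under this levelwise application. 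Hence $H^i(f)$ is invertible in ${\rm Pro}(\Ab)$, as required.

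The argument is essentially formal and there is no substantive obstacle. The one point worth flagging is that for the third part one must not try to invert $H^i(f)$ term by term, because a pro-isomorphism of sheaves need not induce levelwise isomorphisms on cohomology; instead the inverse in ${\rm Pro}(\Ab)$ is obtained by applying $H^i(X, -)$ to the re-indexing data already packaged in $g$.
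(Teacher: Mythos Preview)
Your argument is correct. The paper itself does not give a proof of this lemma, declaring it elementary and leaving it to the reader, so there is nothing to compare against; your write-up supplies exactly the kind of routine verification the authors had in mind.
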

\begin{proof}
It is elementary and is left to the reader.
\end{proof}

\subsection{The relative algebraic $K$-theory}\label{sec:Rel-K}
Given a closed immersion $D \subset X$ of schemes, we let $K(X,D)$ be the homotopy fiber of the
restriction map between the Bass-Thomason-Trobaugh non-connective algebraic 
$K$-theory spectra $K(X) \to K(D)$.
We shall let $K_i(X)$ denote the homotopy groups of $K(X)$ for $i \in \Z$.
We similarly define $K_i(X,D)$. The canonical maps
of spectra $K(X, (m+1)D) \to K(X,mD)$ together give rise to a 
pro-spectrum $\{K(X,mD)\}$ and hence a pro-abelian group
$\{K_i(X,mD)\}$ for every $i \in \Z$.

If $X = \Spec(R)$ is affine and $D = V(I)$, we shall often write $K(X,mD)$ as $K(R, I^m)$ and
$K(X)$ as $K(R)$. For a ring $R$, we shall let $\wt{K}(R_m)$ denote the reduced
$K$-theory of $R_m$, defined as the
homotopy fiber of the augmentation map $K(R_m) \to K(R)$. 

We need to use a push-forward map between
the relative $K$-groups in a special situation. We describe this
below.
Let $R \to R'$ be a finite and flat extension of rings and let $(m, n)$
be a pair of integers such that $m \ge n \ge 0$. 
Let $f \colon \Spec(R') \to \Spec(R)$ denote
the corresponding maps between the schemes.
Since $R'_m \cong R_m \otimes_R R'$, it follows that
$R'_m \otimes_{R_m} R_n \cong R'_n$, where $R_m \surj R_n$ and 
$R'_m \surj R'_n$ are the canonical surjections. 
In particular, the diagram of schemes

\begin{equation}\label{eqn:PF-PB}
\xymatrix@C.8pc{
\Spec(R'_n) \ar[r] \ar[d] & \Spec(R_n) \ar[d] \\
\Spec(R'_m) \ar[r] & \Spec(R_m)}
\end{equation}
is Cartesian, where the vertical arrows are the closed immersions
induced by the surjections $R_m \surj R_n$ and $R'_m \surj R'_n$.
Since the horizontal arrows in this diagram are flat, it follows
that $\Spec(R_n)$ and $\Spec(R'_m)$ are Tor-independent over $\Spec(R_m)$.
Since $R'_m$ is finite and flat over $R_m$, it follows from
\cite[Proposition~3.18]{TT} that ~\eqref{eqn:PF-PB} induces a
homotopy commutative diagram of spectra
\begin{equation}\label{eqn:PF-PB-0}
\xymatrix@C.8pc{
K(R'_m) \ar[r]^-{f_{m *}} \ar[d] & K(R_m) \ar[d] \\
K(R'_n) \ar[r]^-{f_{n *}} & K(R_n),}
\end{equation}
where the horizontal arrows are the push-forward and the
vertical arrows are the pull-back maps between $K$-theory spectra.
Considering the map induced between the vertical homotopy fibers,
we get a push-forward map
$f_{(m,n) *} \colon K(R'_m, {(x^{n+1})}/{(x^{m+1})}) \to 
 K(R_m, {(x^{n+1})}/{(x^{m+1})})$ between the relative
$K$-theory spectra.
The special case of the pair $(m,0)$ yields the
push-forward map 
$f_{m *} \colon \wt{K}(R'_m) \to \wt{K}(R_m)$
between the reduced $K$-theory spectra.

\begin{lem}\label{lem:proj-PF}
Let $R \to R'$ be as above and let $m \ge n \ge 0$ be two integers.
Then the diagram
\begin{equation}\label{eqn:proj-PF-0}
\xymatrix@C.8pc{
\wt{K}_i(R'_m) \ar[r]^{f_{m *}} \ar[d] & \wt{K}(R_m) \ar[d] \\
\wt{K}_i(R'_n) \ar[r]^{f_{n *}} & \wt{K}_i(R_n)}
\end{equation}
is commutative for every $i \in \Z$, 
where the vertical arrows are the pull-back maps
induced by the quotients $R_m \surj R_n$ and $R'_m \surj R_n$.
In particular, there is a push-forward map
between the pro-abelian groups
$f_* \colon \{\wt{K}_i(R'_m)\}_m \to \{\wt{K}_i(R_m)\}_m$ for every $i \in \Z$. 
\end{lem}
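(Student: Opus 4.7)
The plan is to deduce the lemma by assembling several instances of the homotopy commutative square \eqref{eqn:PF-PB-0} into a cube and then passing to vertical homotopy fibers. The point is that all the relevant squares of affine schemes obtained from the pairs $(m,0)$, $(n,0)$, and $(m,n)$ are Cartesian and Tor-independent (because $R'_m \cong R_m \otimes_R R'$ is finite flat over $R_m$, so flatness is preserved along any quotient), so \cite[Proposition~3.18]{TT} applies to each of them and produces the required base-change compatibilities at the spectrum level.

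More precisely, I would form the cube of $K$-theory spectra
\[
\xymatrix@C=1.4pc@R=1.4pc{
K(R'_m) \ar[rr]^{f_{m*}} \ar[dd] \ar[dr] & & K(R_m) \ar[dd]|\hole \ar[dr] & \\
& K(R') \ar[rr]^-{f_{*}} \ar@{=}[dd] & & K(R) \ar@{=}[dd] \\
K(R'_n) \ar[rr]|-{f_{n*}} \ar[dr] & & K(R_n) \ar[dr] & \\
& K(R') \ar[rr]^-{f_{*}} & & K(R)
}
\]
in which the front face is \eqref{eqn:PF-PB-0} for the pair $(m,n)$, the top and bottom faces are \eqref{eqn:PF-PB-0} for the pairs $(m,0)$ and $(n,0)$ respectively, the back face is trivially commutative, and the two remaining side faces commute by functoriality of pullback along the successive quotients $R_m \twoheadrightarrow R_n \twoheadrightarrow R$ (and similarly for $R'$). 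Each face is therefore homotopy commutative.

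Now I would take vertical homotopy fibers along the slanted arrows $K(R_m) \to K(R)$, $K(R'_m) \to K(R')$, $K(R_n) \to K(R)$ and $K(R'_n) \to K(R')$. By definition these fibers compute $\wt{K}(R_m)$, $\wt{K}(R'_m)$, $\wt{K}(R_n)$ and $\wt{K}(R'_n)$, and the homotopy commutativity of the two horizontal faces of the cube produces the push-forward maps $f_{m*}\colon \wt{K}(R'_m) \to \wt{K}(R_m)$ and $f_{n*}\colon \wt{K}(R'_n) \to \wt{K}(R_n)$ on the fibers. The remaining faces then assemble into a homotopy commutative square of reduced $K$-theory spectra, so passing to the $i$-th homotopy group yields the commutative diagram \eqref{eqn:proj-PF-0} for every $i \in \Z$. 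The strict commutativity of this diagram for every $m \ge n$ is precisely what is needed to define, via \eqref{eqn:Pro-1} with $\lambda = {\rm id}$, a strict morphism of pro-abelian groups $f_* \colon \{\wt{K}_i(R'_m)\}_m \to \{\wt{K}_i(R_m)\}_m$.

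The only potential subtlety is bookkeeping: one has to make sure that the side faces of the cube are genuinely commutative (not merely up to coherent homotopy) and that the resulting map on fibers is independent of choices of homotopies. For the present purpose this is harmless because the side faces come from the strictly functorial assignment $A \mapsto K(A)$ applied to the evident commutative squares of rings, and we only need commutativity after passing to homotopy groups. Thus the argument is essentially formal once \eqref{eqn:PF-PB-0} is in hand.
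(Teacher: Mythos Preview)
Your approach and the paper's are close in spirit: both assemble the three base-change squares \eqref{eqn:PF-PB-0} for the pairs $(m,n)$, $(m,0)$, $(n,0)$ into a cube. The difference is that you work at the spectrum level with $K(R')$, $K(R)$ at the back and then pass to homotopy fibers, whereas the paper works directly with homotopy groups and places the reduced groups $\wt{K}_i$ at the back of its cube, with the canonical split inclusions $\wt{K}_i \hookrightarrow K_i$ as the slanted arrows.

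The gap in your argument is precisely the coherence issue you flag in the last paragraph, and your proposed resolution does not close it. Knowing that each of the six faces of your cube is individually homotopy commutative is not enough to conclude that the induced square on fibers commutes, even after passing to homotopy groups: different choices of the base-change homotopies for the top, bottom, and front faces could a priori produce composites on fibers that differ. The sentence ``we only need commutativity after passing to homotopy groups'' does not by itself help, because the maps $f_{m*}, f_{n*}$ on $\wt{K}_i$ are still, as you have set things up, defined via those homotopies.

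What actually closes the gap is the observation that the augmentations $R_m \twoheadrightarrow R$ split, so $\wt{K}_i(R_n) \to K_i(R_n)$ is injective. This means the map $f_{m*}$ on $\wt{K}_i$ is simply the restriction of $f_{m*}$ on $K_i$, independent of any chosen homotopy, and the commutativity of \eqref{eqn:proj-PF-0} then follows from that of \eqref{eqn:PF-PB-0} for the pair $(m,n)$ by an elementary diagram chase in abelian groups. This is exactly the paper's argument: it verifies that five faces of its cube commute and then invokes the injectivity of $\wt{K}_i(R_n) \to K_i(R_n)$ to force the sixth (back) face. Once you insert this one line, your argument collapses to the paper's.
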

\begin{proof}
We fix $i \in \Z$ and consider the diagram
\begin{equation}\label{eqn:proj-PF-1}
\xymatrix@C.8pc{
\wt{K}_i(R'_m) \ar[rr]^-{f_{m *}} 
\ar[dr] \ar[dd] & & \wt{K}_i(R_m) \ar[dd] \ar[dr] & \\ 
& K_i(R'_m) \ar[dd] \ar[rr]^->>>>>{f_{m *}} & & K_i(R_m) \ar[dd] \\
\wt{K}_i(R'_n) \ar[rr]^->>>>>{f_{n *}} \ar[dr] & & \wt{K}_i(R_n) \ar[dr] & \\ 
& K_i(R'_n) \ar[rr]^-{f_{n *}} & & K_i(R_n).}
\end{equation}

We have seen in ~\eqref{eqn:PF-PB-0} that the front face of
~\eqref{eqn:proj-PF-1} is commutative. The left and the right faces 
clearly commute. The top and the bottom faces commute by
applying ~\eqref{eqn:PF-PB-0} corresponding to the pairs $(m, 0)$ and $(n,0)$,
respectively. Since the map $\wt{K}_i(R_n) \to K_i(R_n)$ is injective,
it follows that the back face commutes, as desired.
\end{proof}

Let us now assume that $k \inj k'$ is a finite extension of fields
and let $f \colon \Spec(k') \to \Spec(k)$ denote the induced
morphism of schemes. In this case,
$k[t] \inj k'[t]$ is clearly a finite and flat extension of polynomial
rings. Let $A = k[t]_{(t)}$ denote the localization of $k[t]$ at the maximal
ideal $(t)k[t] \subset k[t]$. Let $A' = k'[t]_{(t)}$ denote the
localization of $k'[t]$ at $(t)k'[t]$. We claim that $A \inj A'$ is a finite
and flat extension of discrete valuation rings.

Indeed, it is clear that there are ring extensions
$A \inj S^{-1}k'[t] \inj A'$ in which the first inclusion 
is finite and flat if we let $S = k[t] \setminus (t)$.
For every integer $m \ge 0$, we 
have a sequence of isomorphisms of $k$-algebras: 
\begin{equation}\label{eqn:elem}
S^{-1}k'[t] \otimes_A A/{(t^{m+1})} \cong (k'[t] \otimes_{k[t]} A)
\otimes_A k_m \cong k'[t] \otimes_{k[t]} k_m \hspace*{4cm} 
\end{equation}
\[
\hspace*{7cm} 
\cong
(k' \otimes_k k[t]) \otimes_{k[t]} k_m \cong k' \otimes_k k_m \cong k'_m.
\]

If we let $\fm = (t)k[t] \subset A$ and $m =0$, it follows that 
$A \inj S^{-1}k'[t]$ is a finite extension of regular semi-local integral
domains of dimension one
such that $\fm S^{-1}k'[t]$ is a maximal ideal of $S^{-1}k'[t]$.
This forces $S^{-1}k'[t]$ to be a discrete valuation ring (since $A$ is).
Since $A'$ is a also a discrete valuation ring which 
is a localization of $S^{-1}k'[t]$, different from the fraction field of
$S^{-1}k'[t]$, we must have $S^{-1}k'[t] = A'$. This proves the claim.

For any integer $m \ge 0$, we now get finite and flat ring extensions
\[
k \inj k', \ k_m \inj k'_m, \ A \inj A', \ \mbox{and} \ k(t) \inj k'(t).
\]
Each of these extensions induces a push-forward map on the
$K$-theory spectra. We denote all these push-forward maps by the
common notation $f_*$.

\begin{lem}\label{lem:push-ford}
For each $i \in \Z$, there is a commutative diagram
\begin{equation}\label{eqn:push-ford-0}
\xymatrix@C.8pc{
\wt{K}_i(k'_m) \ar@{^{(}->}[r] \ar[d]_-{f_*} & K_i(k'_m) \ar[d]^-{f_*} &
K_i(A') \ar@{^{(}->}[r] \ar[l] \ar[d]^-{f_*} & K_i(k'(t)) \ar[d]^-{f_*} \\
\wt{K}_i(k_m) \ar@{^{(}->}[r] & K_i(k_m) &
K_i(A) \ar@{^{(}->}[r] \ar[l] & K_i(k(t)),}
\end{equation}
where the horizontal arrows in the middle and the right
side squares are the natural maps on $K$-theory
induced by the ring homomorphisms. The horizontal arrows in the
left square are the canonical maps.
\end{lem}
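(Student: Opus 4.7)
The plan is to exhibit each of the three squares as arising from a Tor-independent Cartesian square of affine schemes, at which point commutativity becomes an instance of the Thomason--Trobaugh base change \cite[Proposition~3.18]{TT}, used in exactly the same manner as in the derivation of~\eqref{eqn:PF-PB-0}. The key simplification throughout is that $A \inj A'$ is flat (even finite flat), so any base change against it is automatically Tor-independent, and only the Cartesianness of each square needs to be checked separately.

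For the middle square, the diagram of affine schemes obtained from $A \inj A'$ and the quotients $A \surj k_m$, $A' \surj k'_m$ is Cartesian: indeed,~\eqref{eqn:elem} applied with $S^{-1}k'[t]=A'$ gives $A' \otimes_A k_m \cong k'_m$. Since $A \inj A'$ is flat the square is Tor-independent, and \cite[Proposition~3.18]{TT} then yields the homotopy commutativity of $f_*$ with the pullbacks $K_i(A) \to K_i(k_m)$ and $K_i(A') \to K_i(k'_m)$. For the right square, $A \inj k(t)$ and $A' \inj k'(t)$ are localizations; since $t$ is the uniformizer of the discrete valuation ring $A'$, inverting the nonzero elements of $A$ inside $A'$ produces $k'(t)$, so $k(t) \otimes_A A' \cong k'(t)$ and the square is again a Tor-independent Cartesian square of affine schemes, to which the same base-change principle applies. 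The left square commutes by the very construction of $f_*$ on reduced $K$-theory recalled immediately before \lemref{lem:proj-PF}: applying~\eqref{eqn:PF-PB-0} to the pair $(m,0)$ with $R=k$, $R'=k'$ (so that $R_0=k$ and $R'_0=k'$) produces a homotopy commutative square between $K(k'_m) \to K(k_m)$ and $K(k') \to K(k)$, and $f_*\colon \wt{K}(k'_m) \to \wt{K}(k_m)$ is by definition the induced map on vertical homotopy fibres; compatibility with the canonical inclusions $\wt{K}_i(-) \inj K_i(-)$ is then tautological.

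There is essentially no genuine obstacle; the lemma is a catalogue of three instances of the same Tor-independent base-change principle that was already used to define the push-forwards in the first place. The only care required is purely bookkeeping, namely the explicit identifications $A'\otimes_A k_m \cong k'_m$ and $A'\otimes_A k(t) \cong k'(t)$, both of which are immediate from~\eqref{eqn:elem} and from the fact that $A'$ is a discrete valuation ring with uniformizer $t$ and fraction field $k'(t)$.
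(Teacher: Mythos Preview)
Your argument for the commutativity of all three squares is correct and is essentially identical to the paper's own proof: both reduce to Tor-independent base change via \cite[Proposition~3.18]{TT}, using the identifications $A'\otimes_A k_m\cong k'_m$ from~\eqref{eqn:elem} and $A'\otimes_A k(t)\cong k'(t)$, and both invoke the construction preceding \lemref{lem:proj-PF} for the left square. The one thing you omit is a verification of the injectivity of the horizontal arrows in the right square (drawn as hooked arrows in~\eqref{eqn:push-ford-0}); the paper dispatches this in one line via the Gersten resolution for $K$-theory of the discrete valuation rings $A$ and $A'$.
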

\begin{proof}
The left square commutes by the construction of $f_*$ in
~\eqref{eqn:PF-PB-0}. The horizontal arrows in this square are split injective
via the augmentation maps. The middle square commutes by exactly the
same argument as for the commutativity of ~\eqref{eqn:PF-PB-0}
since $A'$ is finite and flat over $A$, hence $A'$ and $k_m$ are
Tor-independent over $A$. Furthermore,
$A' \otimes_A k_m \cong k'_m$ by ~\eqref{eqn:elem}. 
The square on the right side commutes by a similar reason
once we know that $k(t) \otimes_A A' \cong k'(t)$.
But this is obvious because $k'(t)$ is the field of fractions of
$A'$ on the one hand and $k(t) \otimes_A A'$ is a localization 
of the integral domain $A'$ which is finite over $k(t)$ on the
other hand. It follows that $k'(t) \subset k(t) \otimes_A A' \subset k'(t)$. 
The horizontal arrows in the right side square are injective by
the Gersten resolution of $K$-theory.
\end{proof}

\subsection{The additive higher Chow groups}\label{sec:ACG*}
We recall the definition of the higher Chow groups with modulus and
the additive higher Chow groups (see \cite{BS}, \cite{KP-1}
and \cite{KP}). Let $k$ be a field and let $X$
be an equidimensional scheme over $k$. Let $D \subset X$ be an
effective Cartier divisor. 

For any integers 
$n, q \ge 0$, we let
$\un{z}^q(X|D, n)$ denote the free abelian group on the set of integral closed 
subschemes of $X \times \square^n$ of codimension $q$ satisfying the following.
\begin{enumerate}
\item
$Z$ intersects $X \times F$ properly for each face $F \subset \square^n$.
\item
If $\ov{Z}$ is the closure of $Z$ in $X \times \ov{\square}^n$ and 
$\nu: \ov{Z}^N \to X \times \ov{\square}^n$
is the canonical map from the normalization of $\ov{Z}$, then the inequality 
(called the modulus condition)
\[
\nu^*(D \times \ov{\square}^n) \le \nu^*(X \times F^1_n)
\] 
holds in the group of Weil divisors on $\ov{Z}^N$.
\end{enumerate}

An element of the group $\un{z}^q(X|D, n)$ will be called an admissible cycle.
It is known that $\{n \mapsto \un{z}^q(X|D, n)\}$ is a 
cubical abelian group (see \cite[\S~1]{KLevine}). 
We denote this by $\un{z}^q(X|D, *)$.
We let $z^q(X|D, *) = \frac{\un{z}^q(X|D, *)}{\un{z}^q_{\rm degn}(X|D, *)}$, 
where $\un{z}^q_{\rm degn}(X|D, *)$ is the degenerate part of the cubical 
abelian group $\un{z}^q(X|D, *)$. 

The higher Chow groups with modulus of $(X,D)$ are defined as 
$\CH^q(X|D, n) = H_n(z^q(X|D, *))$.
It is clear that there is a canonical
map $\CH^q(X|(m+1)D, n) \to  \CH^q(X|mD, n)$ for every integer $m \ge 1$.
In particular, $\{\CH^q(X|mD, n)\}_{m \ge 1}$ is a pro-abelian group.

For an equidimensional scheme $X$ over $k$ and
integers $m, n \ge 0, q \ge 1$, the additive higher Chow
group of $X$ is defined by
\begin{equation}\label{eqn:ACG}
\TCH^q(X, n+1;m) := \CH^q(X \times \A^1_k| X \times (m+1)\{0\}, n).
\end{equation}
As with the Chow groups with modulus, the datum $(X, n, q)$ 
for $n, q \ge 1$ gives rise to a pro-abelian group
$\{\TCH^q(X, n;m)\}_{m \ge 0}$.

\subsection{The cycle class map}\label{sec:CCM}
In this subsection, we recall our main object of study, the cycle
class map for 0-cycles with modulus, which was constructed in \cite{GK}.
Let $X$ be a smooth quasi-projective scheme of dimension $d \ge 1$ over
a field $k$ and let $D \subset X$ be an effective Cartier divisor.
We fix an integer $n \ge 0$.

Let $z \in X \times \square^n$ be an admissible closed point
and let $f \colon z = \Spec(k(z)) \to \square^n$ be the projection map.
For $1 \le i \le n$, we let $y_i \colon \Spec(k(z)) 
\xrightarrow{f} \square^n \to \square$ be the projection to the
$i$-th factor of $\square^n$.
It follows from the face condition of $z$ that $y_i(z)$ does not meet
$0, \infty \in \square$ for any $i$. Hence, we get an element
$\un{y}(z) = \{y_1(z), \ldots , y_n(z)\}
\in K^M_n(k(z))$. Composing with the canonical map
$K^M_n(k(z)) \to K_n(k(z))$, we get an element 
$\un{y}(z) \in K_n(k(z))$. Since $\Spec(k(z)) \to X$ is finite and 
$z \notin D \times \square^n$, it follows that there is a 
push-forward map $K(k(z)) \cong K(z, \emptyset) \
\to K(X,D)$.
Letting $cyc_{X|D}([z])$ be the image of $\un{y}(z) \in K_n(k(z))$
in $K_n(X,D)$ and extending it linearly, we obtain a cycle
class map
\begin{equation}\label{eqn:CCM-gen}
cyc_{X|D} \colon z^{d+n}(X|D,n) \to K_n(X,D).
\end{equation}

The key observation in the construction of the cycle class map at
the level of the Chow group of 0-cycles with modulus is that
the composite map
$z^{d+n}(X|(n+1)D,n+1) \inj z^{d+n}(X|D,n+1) \xrightarrow{\partial}  
z^{d+n}(X|D,n)$ is zero. This yields for every $m \ge 1$, the map
\begin{equation}\label{eqn:CCM-main}
cyc_{X|mD} \colon \CH^{d+n}(X|(n+1)mD,n) \to K_n(X, mD).
\end{equation}
This coincides with the cycle class maps of Levine \cite{Levine-1}
and Totaro \cite{Totaro} when $D = \emptyset$.

It is immediate from the above construction that the maps 
$\{cyc_{X|mD}\}_{m \ge 1}$ are compatible with respect to the change in
$m \ge 1$. In particular, they together give rise to a
map of pro-abelian groups
$cyc_{X|D} \colon \{\CH^{d+n}(X|mD,n)\}_m \to \{K_n(X, mD)\}_m$.

\vskip .3cm

Applying ~\eqref{eqn:CCM-main} to the additive higher Chow groups of
the field $k$, we see that for every $m \ge 0$ and $n \ge 1$,
there is a cycle class map
$cyc_{k|m} \colon \TCH^{n}(k,n ; n(m+1)-1) \to K_{n-1}(\A^1_k, (m+1)\{0\})$.
The homotopy fiber sequence
$K(\A^1_k, (m+1)\{0\}) \to K(\A^1_k) \to K(\Spec(k_m))$ and the homotopy
invariance of $K$-theory for smooth schemes together show that the
connecting homomorphism $\partial \colon \Omega \wt{K}(\Spec(k_m)) \to 
K(\A^1_k, (m+1)\{0\})$ is a functorial weak equivalence.
Hence, we get a cycle class map
\begin{equation}\label{eqn:CCM-add}
cyc_{k|m} \colon \TCH^{n}(k,n ; n(m+1)-1) \to \wt{K}_n(k_m).
\end{equation}

The compatibility of these maps for varying values of $m \ge 0$ yields
the cycle class map at the level of pro-abelian groups
\begin{equation}\label{eqn:CCM-pro}
cyc_{k} \colon \{\TCH^{n}(k,n ; m)\}_{m} \to \{\wt{K}_n(k_m)\}_{m}
\end{equation}
for which the associated function $\lambda_n \colon \N \to \N$ 
(see \S~\ref{sec:Pro}) is given by $\lambda_n(m) = n(m+1) - 1$.

About the above cycle class map, the following were shown in \cite{GK} when the base field
has characteristic zero.

\begin{enumerate}
\item
The map $cyc_k$ of ~\eqref{eqn:CCM-pro} extends
to the additive Chow group of relative 0-cycles over 
a regular semi-local domain $R$, essentially of finite type over 
$k$.
\item
The resulting map $cyc_R$ is an isomorphism.
\end{enumerate}

In this manuscript, we wish to study this problem when $k$ has
positive characteristic.

\section{The relative Milnor $K$-groups}\label{sec:Milnor}
The relative Milnor $K$-groups were defined by Kato-Saito \cite[\S~1.3]{Kato-Saito}, Kerz \cite{Kerz10}  and
R\"ulling-Saito \cite[\S~2.7]{RS}. The groups defined by Kato-Saito and Kerz agree when all residue fields
of the underlying ring are infinite. However, they differ from the one defined by R{\"u}lling-Saito
even if all residue fields are infinite. When the underlying ring has a finite residue field, all three
are in general different from each other.
We need to establish some isomorphisms between these $K$-groups
in pro-setting
in order to prove our main results. We shall prove these isomorphisms 
in the next two sections.

\subsection{Kato-Saito relative Milnor $K$-groups}\label{sec:Kato-S}
For a ring $R$, the Milnor $K$-group $K^M_n(R)$ was defined by Kato \cite{Kato} to be
the $n$-th graded piece of the graded ring $K^M_*(R)$.
The latter is the quotient of the tensor algebra $T_*(R^{\times})$ by the 
two-sided graded ideal generated by the homogeneous elements
$a_1 \otimes \cdots \otimes a_n$ such that $n \ge 2$ and $a_i + a_j = 1$ for some $1 \le i \neq j \le n$.
The residue class $a_1 \otimes \cdots \otimes a_n \in T_n(R^{\times})$
in $K^M_n(R)$ is denoted by the Milnor symbol $\un{a} = \{a_1,  \ldots , a_n\}$.
If $a_i + a_j = 1$ for some $1 \le i \neq j \le n$, we shall usually say that $\un{a}$ is a Kato symbol
(or Kato relation).
If $I \subset R$ is an ideal, the relative Milnor $K$-group
$K^M_n(R,I)$ was defined by Kato-Saito \cite[\S~1.3]{Kato-Saito} as the kernel of the natural map $K^M_n(R) \to K^M_n(R/I)$.
In order to give a simple description of $K^M_*(R,I)$, we need the following elementary step.

\begin{lem}\label{lem:Milnor-elem}
  Let $R$ be a semi-local ring containing a field of cardinality at least 
three. Let $I \subset R$ be a proper ideal.
  Suppose $a \in R$ is such that $\ov{a} \in (R/I)^{\times}$. We can then find an
  element $b \in I$ such that $a+b \in R^{\times}$. If $\ov{a}, 1 - \ov{a} \in (R/I)^{\times}$, then
  we have $a+b, 1 -(a+b) \in R^{\times}$.
\end{lem}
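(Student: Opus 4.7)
The plan is to exploit the semi-local structure of $R$ together with the Chinese Remainder Theorem. Let $\mathfrak{m}_1,\ldots,\mathfrak{m}_s$ be the finitely many maximal ideals of $R$. Reorder so that $I \subseteq \mathfrak{m}_i$ for $1 \le i \le r$ and $I \not\subseteq \mathfrak{m}_i$ for $r+1 \le i \le s$. For the maximal ideals in the first group, any $b \in I$ satisfies $b \in \mathfrak{m}_i$, so $a + b \equiv a \pmod{\mathfrak{m}_i}$, and since $\bar a \in (R/I)^\times$ the image of $a$ in $R/\mathfrak{m}_i$ is already a unit. Thus the real task is to control the residues of $a+b$ modulo $\mathfrak{m}_{r+1},\ldots,\mathfrak{m}_s$.

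For those indices, I would note that $I + \mathfrak{m}_i = R$, and that the ideals $\{I \cap \mathfrak{m}_i\}_{i > r}$ are pairwise comaximal (because the $\mathfrak{m}_i$ are). Hence the natural map
\[
I \longrightarrow \prod_{i=r+1}^{s} R/\mathfrak{m}_i
\]
is surjective. Given any target residues $\bar c_{r+1},\ldots,\bar c_s$ in $R/\mathfrak{m}_{r+1},\ldots,R/\mathfrak{m}_s$, we can therefore find $b \in I$ with $b \equiv \bar c_i - \bar a \pmod{\mathfrak{m}_i}$ for $i > r$. For the first assertion it suffices to pick each $\bar c_i$ to be any unit in $R/\mathfrak{m}_i$ (say $\bar c_i = 1$); then $a + b \equiv \bar c_i \not\equiv 0 \pmod{\mathfrak{m}_i}$ for $i > r$, and already $a + b \not\equiv 0 \pmod{\mathfrak{m}_i}$ for $i \le r$ by the paragraph above. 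Hence $a + b \in R^\times$.

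For the second assertion we need to avoid both $0$ and $1$ modulo each $\mathfrak{m}_i$ with $i > r$. This is where the cardinality hypothesis enters: the field $k \subset R$ of cardinality at least three injects into each residue field $R/\mathfrak{m}_i$ (a nonzero map from a field is injective), so $R/\mathfrak{m}_i$ contains at least three elements and we can choose $\bar c_i \in R/\mathfrak{m}_i$ with $\bar c_i \ne 0,1$. Arguing as before, the resulting $b \in I$ gives $a + b \equiv \bar c_i \pmod{\mathfrak{m}_i}$, so both $a + b$ and $1 - (a+b)$ avoid $\mathfrak{m}_i$ for $i > r$. For $i \le r$ the congruences $a + b \equiv a$ and $1 - (a+b) \equiv 1 - a \pmod{\mathfrak{m}_i}$ combined with the hypothesis $\bar a, 1 - \bar a \in (R/I)^\times$ give the same conclusion. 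Hence $a + b$ and $1 - (a+b)$ lie in $R^\times$ simultaneously.

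The only real obstacle is making sure the $\bar c_i$ can be chosen to avoid two prescribed values in each residue field; this is exactly the reason for the assumption that $R$ contains a field of cardinality at least three, and without it the argument would fail precisely when some $R/\mathfrak{m}_i$ is the field with two elements.
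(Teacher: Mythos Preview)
Your proof is correct and follows essentially the same route as the paper's: split the maximal ideals into those containing $I$ and those not, observe that residues modulo the first group are already controlled by the hypothesis, and use the Chinese remainder theorem to adjust $b \in I$ so that $a+b$ has prescribed residues modulo the second group. The only cosmetic difference is that the paper first picks a single global element $u \in R$ with $u, 1-u \in R^{\times}$ (using the subfield of cardinality at least three) and sets $b \equiv u - a$ modulo each $\mathfrak{n}_j$, whereas you choose the target residues $\bar c_i$ separately in each residue field; both achieve the same thing. One small point: your phrase ``the ideals $\{I \cap \mathfrak{m}_i\}_{i>r}$ are pairwise comaximal'' is literally false as ideals of $R$ (their sum lies in $I \subsetneq R$), but the intended conclusion that $I \to \prod_{i>r} R/\mathfrak{m}_i$ is surjective is correct, since $I + \bigcap_{i>r}\mathfrak{m}_i = R$ (any maximal ideal containing the left side would have to contain $I$ and some $\mathfrak{m}_i$ with $i>r$, a contradiction).
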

\begin{proof}
  Let $\{\fm_1, \ldots , \fm_r\}$ be the set of maximal ideals of $R$ which contain $I$ and
  let $\{\fn_1, \ldots , \fn_s\}$ be the set of remaining maximal ideals of $R$.
  Since $R$ contains a field of cardinality at least 
three, we can find an element $u \in R$ such that $u, 1-u \in R^{\times}$.
  By the Chinese remainder theorem, we can find an element $b \in I$ such that
  $b \equiv u - a$ modulo $\fn_i$ for $1 \le i \le s$.

  If $\ov{a} \in (R/I)^{\times}$, then we must have that $a \notin \fm_i$ for any $i$.
  It follows that $a + b \notin \fm_i$ for all $i$. Since $a+ b$ is a unit modulo $\fn_j$ for each $j$,
  $a+b$ can not belong to $\fn_j$ either. It follows that $a + b \in R^{\times}$.
  Suppose now that $\ov{a}, 1 - \ov{a} \in (R/I)^{\times}$. Then $ 1 - (a+b)$ can not be in any $\fm_i$.
  On the other hand, we have $1 - (a + b) \equiv 1 - u$, which is a unit modulo $\fn_j$ for every $j$ and hence
  $1 - (a+b)$ can not be in any $\fn_j$ either. It follows that $a+b, 1 - (a+b) \in R^{\times}$.
 \end{proof}

The next lemma is due to Kato-Saito (see \cite[Lemma~1.3.1]{Kato-Saito})
when $R$ is local. We shall need a version of this also
for the relative $K$-theory of Kerz \cite{Kerz09} 
(see \lemref{lem:Kerz-symbol}).

\begin{lem}\label{lem:Semi-local}$($\cite[Lemma~1.3.1]{Kato-Saito}$)$
  Let $R$ be a semi-local ring and $I \subset R$ a proper ideal. Then $K^M_n(R) \to K^M_n(R/I)$
  is surjective. If $R$ contains a field of cardinality at least three, then $K^M_n(R,I)$ is generated by the
  Milnor symbols $\{a_1, \ldots , a_n\}$ such that $a_i \in {\rm Ker}(R^{\times} \to (R/I)^{\times})$
for some $1 \le i \le n$.
\end{lem}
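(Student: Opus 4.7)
The plan is to prove the surjectivity statement first, and then recognize the second assertion as the statement that the canonical surjection $K^M_n(R)/H \twoheadrightarrow K^M_n(R/I)$ is an isomorphism, where $H \subseteq K^M_n(R,I)$ is the subgroup generated by the Milnor symbols in which one of the entries lies in $\ker(R^\times \to (R/I)^\times)$.

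For surjectivity, every symbol in $K^M_n(R/I)$ is a sum of symbols $\{\ov{u}_1, \dots, \ov{u}_n\}$ with $\ov{u}_i \in (R/I)^\times$, so it suffices to prove that $R^\times \to (R/I)^\times$ is surjective. For this, let $\{\fm_1, \dots, \fm_r\}$ be the maximal ideals of $R$ containing $I$ and $\{\fn_1, \dots, \fn_s\}$ the remaining maximal ideals. Given $u \in R$ whose class is a unit of $R/I$, we have $u \notin \fm_i$ for any $i$, while $I + \fn_j = R$ for each $j$. By the Chinese remainder theorem we pick $b \in I$ with $b \equiv 1 - u \pmod{\fn_j}$ for every $j$, so $u + b$ is a unit in $R$ lifting $\ov{u}$. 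This handles the first claim and does not require the cardinality hypothesis.

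For the second claim, the projection $\pi \colon K^M_n(R)/H \to K^M_n(R/I)$ is surjective by what we just proved, so it is enough to construct a two-sided inverse $\varphi \colon K^M_n(R/I) \to K^M_n(R)/H$. Given $\ov{a}_1, \dots, \ov{a}_n \in (R/I)^\times$, use \lemref{lem:Milnor-elem} (here the cardinality $\ge 3$ assumption is used) to lift each $\ov{a}_i$ to a unit $a_i \in R^\times$, and set $\varphi(\{\ov{a}_1, \dots, \ov{a}_n\}) = \{a_1, \dots, a_n\} \bmod H$. Any two lifts of $\ov{a}_i$ differ by a factor in $\ker(R^\times \to (R/I)^\times)$, so multilinearity of the symbol and the definition of $H$ guarantee that this value is independent of the chosen lifts. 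Multilinearity in each slot is then formal.

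The only nontrivial point is that $\varphi$ respects the Kato (Steinberg) relations. Suppose $\ov{a}_i + \ov{a}_j = 1$ in $R/I$ for some $i \ne j$. Pick any lift $a \in R$ of $\ov{a}_i$; then $\ov{a}, 1-\ov{a} \in (R/I)^\times$, and by the second part of \lemref{lem:Milnor-elem} there exists $b \in I$ with $a + b \in R^\times$ and $1 - (a+b) \in R^\times$. Setting $a_i = a+b$ and $a_j = 1 - a_i$ gives units of $R$ lifting $\ov{a}_i$ and $\ov{a}_j$ with $a_i + a_j = 1$, whence the symbol $\{a_1, \dots, a_n\}$ already vanishes in $K^M_n(R)$ and hence in $K^M_n(R)/H$. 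Since $\varphi$ is independent of the chosen lifts, the Kato relation in $K^M_n(R/I)$ is respected. Thus $\varphi$ descends to a well-defined homomorphism, is manifestly a section of $\pi$, and by surjectivity of $\pi$ also its inverse; therefore $H = K^M_n(R,I)$, as claimed. The main obstacle is precisely the verification of the Steinberg relation for $\varphi$, which is where \lemref{lem:Milnor-elem} (and thus the cardinality hypothesis) is indispensable.
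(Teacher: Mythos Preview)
Your proof is correct and follows the same strategy as the paper: construct a section of the surjection $K^M_n(R)/H \to K^M_n(R/I)$ by lifting units, check independence of lifts, and use \lemref{lem:Milnor-elem} to lift the Kato relations. Your independence-of-lifts argument via multilinearity is slightly slicker than the paper's, which writes out explicit identities and inducts on $n$, but the content is identical.

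One minor logical slip to fix: the assertion that $\pi\varphi = \id$ together with surjectivity of $\pi$ makes $\varphi$ the inverse of $\pi$ is false in general (a surjection can have many sections without being injective). What you actually need---and what the paper checks---is $\varphi\pi = \id$. This follows at once from the independence-of-lifts step you already proved: for any generator $\{a_1,\ldots,a_n\}\bmod H$ of $K^M_n(R)/H$, the $a_i$ are themselves lifts of the $\ov{a}_i$, so $\varphi\pi(\{a_1,\ldots,a_n\}\bmod H) = \{a_1,\ldots,a_n\}\bmod H$. With that correction the argument is complete.
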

\begin{proof}
  The first part of the lemma follows from \lemref{lem:Milnor-elem}. The reader can check from
  the proof of \lemref{lem:Milnor-elem} that this part does not require $R$ to contain
  a field (take $u =1$).
  To prove the second part, let $N$ be the ideal of $T_*(R)$ generated Kato relations (see above) and
  the Milnor symbols of the type given in the statement of the lemma. It is clear that
  the map ${T_*(R)}/N \to K^M_*(R/I)$ is surjective. It suffices therefore to construct a
  map $\eta_I \colon K^M_*(R/I) \to {T_*(R)}/N$ such that the composite
  ${T_*(R)}/N \to K^M_*(R/I) \to {T_*(R)}/N$ is identity.

  Given $a'_1, \ldots , a'_n \in (R/I)^{\times}$, we can use the first part of the lemma to find
  $a_1, \ldots , a_n \in R^{\times}$ such that $\ov{a_i} = a'_i$ for each $i$.
  We let $\eta_I(a'_1 \otimes \ldots \otimes a'_n) = \{a_1, \ldots , a_n\}$ modulo $N$.
  To show that this does not depend on the choice of the lifts, we first let $n =2$
  (note that the $n =1$ case is clear).
  We let $a_i, b_i \in R^{\times}$ be such that $a_ib^{-1}_i \equiv 1$ modulo $I$ for $i =1,2$.
  We then have the identities $\{a_1, a_2\} = \{a_1, a_2b^{-1}_2\} + \{a_1, b_2\}$ and
  $\{b_1, b_2\} = \{a^{-1}_1b_1, b_2\} + \{a_1, b_2\}$. The $n =2$ case follows immediately from
  these two identities.

  Suppose now that $n \ge 3$ and
  we are given $a_i, b_i \in R^{\times}$ such that $a_ib^{-1}_i \equiv 1$ modulo $I$
  for $1 \le i \le n$. We then have the identities
  \[
\{a_1, \ldots , a_n\} = \{a_1, \ldots , a_{n-1}, a_nb^{-1}_n\} + \{a_1, \ldots a_{n-2}\} \cdot \{a_{n-1}, b_n\}
\ \mbox{and}
\]
\[
\{b_1, \ldots , b_n\} = \{b_1, \ldots , a^{-1}_{n-1}b_{n-1}, b_n\} + \{b_1, \ldots , b_{n-2}\} \cdot
\{a_{n-1}, b_n\}.
\]

Using the induction and the above two identities, we conclude the proof of well-definedness of $\eta_I$.
It is easy to check that $\eta_I$ is multi-linear and hence defines a ring homomorphism
$\eta_I \colon T_*(R/I) \to {T_*(R)}/N$. Furthermore, it follows from \lemref{lem:Milnor-elem}
that $\eta_I$ kills Kato relations. In particular, we get a ring homomorphism
$\eta_I \colon K^M_*(R/I) \to {T_*(R)}/N$. It is clear that the composite map
${T_*(R)}/N \to K^M_*(R/I) \to {T_*(R)}/N$ is identity. This finishes the proof.
\end{proof}

\begin{remk}\label{remk:Only-local}
  Lemmas~\ref{lem:Milnor-elem} and \ref{lem:Semi-local} hold even if $R$ does not contain a field
  as long as ${R}/{\fm}$ contains at least three elements for every maximal ideal
  $\fm \subset R$ not containing $I$. In particular, they hold if $R$ is local.
  It is however not clear that they hold for all semi-local rings. The problem lies in lifting Kato relations from $R/I$ to $R$.
  \end{remk}

  \subsection{Kerz's relative Milnor $K$-groups}\label{sec:Kerz*}
  In \cite{Kerz09}, Kerz defined the Milnor $K$-group $K^{MS}_n(R)$ for a ring $R$ to be the $n$-th graded piece of the
  graded ring $K^{MS}_*(R)$. The latter is the quotient of the tensor algebra $T_*(R^{\times})$ by the two-sided
  graded ideal generated by the Steinberg symbols $a \otimes (1- a) \in T_2(R^{\times})$, where
  $a,  1-a \in R^{\times}$.  This is the direct extension of $K$-theory of fields defined by Milnor
  \cite{Milnor-1} to rings.
  If $I \subset R$ is an ideal, we let
  $K^{MS}_n(R,I)$ be the kernel of the natural map $K^{MS}_n(R) \to K^{MS}_n(R/I)$.
A straightforward imitation of the proof of \lemref{lem:Semi-local} shows the following.

\begin{lem}\label{lem:Kerz-symbol}
  \lemref{lem:Semi-local} is valid for the map $K^{MS}_*(R) \to K^{MS}(R/I)$ and the group $K^{MS}_*(R,I)$.
\end{lem}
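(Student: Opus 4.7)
The plan is to mirror the proof of \lemref{lem:Semi-local} line by line, simply replacing the role played there by Kato relations (pairs $a_i + a_j = 1$) with the role played here by Steinberg relations (pairs $a, 1-a \in R^\times$). The key input, \lemref{lem:Milnor-elem}, was already stated in the form needed: its first part gives lifts of units, and its second part lifts Steinberg pairs. So both halves of \lemref{lem:Semi-local} transfer directly.

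For surjectivity of $K^{MS}_n(R) \to K^{MS}_n(R/I)$, the argument is identical to the one in \lemref{lem:Semi-local} and uses only the first part of \lemref{lem:Milnor-elem}: every element of $(R/I)^{\times}$ lifts to an element of $R^{\times}$, so every Milnor symbol on $R/I$ is the image of a Milnor symbol on $R$. This step does not touch the defining relations of $K^{MS}$ vs $K^M$ at all.

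For the description of $K^{MS}_n(R,I)$, I would let $N \subset T_*(R^\times)$ be the two-sided graded ideal generated by all Steinberg symbols $a \otimes (1-a)$ together with all tensors $a_1 \otimes \cdots \otimes a_n$ having some $a_i \in \ker(R^\times \to (R/I)^\times)$, and construct a splitting
\[
\eta_I \colon K^{MS}_*(R/I) \longrightarrow T_*(R^\times)/N
\]
of the canonical surjection $T_*(R^\times)/N \twoheadrightarrow K^{MS}_*(R/I)$. The definition is the obvious one: send $a'_1 \otimes \cdots \otimes a'_n$ to $\{a_1,\ldots,a_n\} \bmod N$ for any choice of unit lifts $a_i$ of $a'_i$. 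Independence of the lifts is verified exactly as in \lemref{lem:Semi-local}, via the same two telescoping identities
\[
\{a_1,a_2\} = \{a_1,a_2 b_2^{-1}\} + \{a_1,b_2\}, \qquad \{b_1,b_2\} = \{a_1^{-1}b_1,b_2\} + \{a_1,b_2\}
\]
in degree $2$, and the analogous identities combined with induction in higher degrees. This produces a well-defined multiplicative map $\eta_I \colon T_*(R/I)^\times \to T_*(R^\times)/N$.

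The only step where the change from Kato to Steinberg relations is actually felt is in showing that $\eta_I$ factors through $K^{MS}_*(R/I)$: given $\ov{a}, 1 - \ov{a} \in (R/I)^\times$, the second part of \lemref{lem:Milnor-elem} (here the hypothesis that $R$ contains a field of cardinality at least three is essential) produces a lift $a \in R^\times$ with $1 - a \in R^\times$ as well, so $\{a, 1-a\}$ is itself a Steinberg symbol in $T_*(R^\times)$ and thus vanishes modulo $N$ by the very definition of $N$. The composition $T_*(R^\times)/N \to K^{MS}_*(R/I) \xrightarrow{\eta_I} T_*(R^\times)/N$ is visibly the identity, giving the desired splitting and hence the description of $K^{MS}_n(R,I)$. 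The potential obstacle — lifting Steinberg pairs compatibly across a semi-local ring — is not an obstacle at all here, since \lemref{lem:Milnor-elem} was already set up to do exactly this; the entire proof is thus a formal translation of the previous argument.
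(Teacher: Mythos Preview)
Your proposal is correct and is exactly the approach the paper intends: the paper's own proof consists of the single sentence ``A straight-forward imitation of the proof of \lemref{lem:Semi-local} shows the following,'' and your write-up carries out precisely that imitation, with the only substantive change being the use of the second part of \lemref{lem:Milnor-elem} to lift Steinberg pairs rather than Kato pairs.
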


It is evident from the above definitions and Lemmas~\ref{lem:Semi-local} and ~\ref{lem:Kerz-symbol}
that there are natural surjections
\begin{equation}\label{eqn:Kato-Kerz}
  K^{MS}_*(R) \surj K^M_*(R) \ \mbox{and} \ K^{MS}_*(R,I) \surj K^M_*(R,I),
\end{equation}
where the second surjectivity holds under the assumption that $R$ is a semi-local ring containing a field of cardinality at least three.
It follows from \cite[Lemma~2.2]{Kerz09} that the maps of ~\eqref{eqn:Kato-Kerz} are isomorphisms if
  $R$ is a semi-local ring with infinite residue fields. In fact, the reader can easily check that the
  proof of \cite[Lemma~2.2]{Kerz09} remains valid if $R$ is a local ring whose residue field contains at least five
  elements (if $a = 1 - b$ with $b \notin R^{\times}$, take $s_1 = s_2 = 2- b$ in Kerz's proof).
  We therefore get:

  \begin{lem}\label{lem:surj-Milnor}
    Let $R$ be either a semi-local ring with infinite residue fields or
    a local ring with residue field having cardinality at least five.  Let $I \subset R$ be
    a proper ideal. Then the maps $K^{MS}_*(R) \to K^M_*(R)$ and $K^{MS}_*(R,I) \to K^M_*(R,I)$ are isomorphisms.
    \end{lem}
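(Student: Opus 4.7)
The plan is to handle the absolute and relative statements in turn, reducing the relative case to the absolute one via a five-lemma argument.

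For the absolute isomorphism $K^{MS}_*(R) \to K^M_*(R)$, when $R$ is semi-local with infinite residue fields I would invoke \cite[Lemma~2.2]{Kerz09} directly. For the local case with residue field of cardinality at least five, I would revisit Kerz's proof: the only step that uses infiniteness is the choice of auxiliary units $s_1, s_2 \in R^\times$ in the final symbol manipulation. Following the authors' hint, in the delicate situation $a = 1-b$ with $b \notin R^\times$ one takes $s_1 = s_2 = 2-b$; since $b$ lies in the maximal ideal, $2-b$ is a unit of $R$, and with at least five elements available in the residue field there is enough room to complete the identity and extract the Kato relation from the Steinberg relations alone.

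For the relative statement, I would invoke the commutative diagram with exact rows
\[
\xymatrix@C.8pc{
0 \ar[r] & K^{MS}_n(R,I) \ar[r] \ar[d] & K^{MS}_n(R) \ar[r] \ar[d] & K^{MS}_n(R/I) \ar[r] \ar[d] & 0 \\
0 \ar[r] & K^{M}_n(R,I) \ar[r] & K^{M}_n(R) \ar[r] & K^{M}_n(R/I) \ar[r] & 0,}
\]
where exactness on the left of each row holds by definition, and surjectivity on the right follows from \lemref{lem:Semi-local} and \lemref{lem:Kerz-symbol} (using \remref{remk:Only-local} to cover the local case, where the field-containment hypothesis is not needed). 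The quotient $R/I$ inherits the hypothesis on residue fields: if $R$ is semi-local with infinite residue fields then so is $R/I$, and if $R$ is local with residue field of cardinality $\geq 5$ then $R/I$ is local with the same residue field. Hence the absolute case applies to both $R$ and $R/I$, making the middle and right vertical maps isomorphisms, and the five lemma yields the isomorphism on the left.

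The main obstacle is the verification that Kerz's combinatorial identity from \cite[Lemma~2.2]{Kerz09} still produces every Kato relation when only five residue units are available. The authors' hint reduces this to checking that the specific substitution $s_1 = s_2 = 2-b$ works in place of Kerz's original generic choice, which is a routine inspection of his symbol computation and does not require any new conceptual input.
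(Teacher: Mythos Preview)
Your proposal is correct and follows essentially the same approach as the paper: invoke \cite[Lemma~2.2]{Kerz09} for the absolute isomorphism (with the indicated modification $s_1=s_2=2-b$ in the local case), then deduce the relative statement. The only difference is that you spell out the five-lemma step explicitly, whereas the paper leaves the passage from the absolute to the relative isomorphism implicit in the sentence preceding the lemma. One minor remark: your appeal to \remref{remk:Only-local} is unnecessary, since the surjectivity assertion in the first part of \lemref{lem:Semi-local} (and hence of \lemref{lem:Kerz-symbol}) already holds for arbitrary semi-local rings without any field-containment hypothesis.
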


\subsection{Gabber-Kerz improved relative Milnor $K$-groups}\label{sec:Improved-K}
When the residue fields of $R$ are not all infinite, 
then the Milnor $K$-theories $K^M_*(R)$ and $K^{MS}_*(R)$ do
not have good properties. 
For example, the Gersten conjecture does not hold for them even if $R$ is a regular
local ring containing a field.
If $R$ is a finite product of local rings containing a field, 
Gabber (unpublished) and Kerz \cite{Kerz10} defined an improved 
version of Milnor $K$-theory,
which is denoted as $\wh{K}^M_*(R)$. This is a graded commutative ring and
\cite[Proposition~10 (3), Theorem~13]{Kerz10} imply that 
there are natural maps of graded commutative rings 

\begin{equation}\label{eqn:Relation}
K^{MS}_*(R) \surj K^M_*(R) \stackrel{\eta_R}{\surj} \wh{K}^M_*(R) \xrightarrow{\psi_R} K_*(R). 
\end{equation}

The first two arrows are isomorphisms 
if $R$ is a field. Moreover, the Gersten resolution holds for $\wh{K}^M_*(R)$
if $R$ is regular. 
Given an ideal $I \subset R$, one defines $\wh{K}^M_*(R, I)$
similar to $K^M_*(R,I)$. The product structures on the 
(improved) Milnor and Quillen $K$-theories yield 
natural graded homomorphisms of $K^{MS}_*(R)$-modules
\begin{equation}\label{eqn:Relation-0}
K^{MS}_*(R,I) \surj K^M_*(R, I) \xrightarrow{\eta_{R|I}} \wh{K}^M_*(R,I) \xrightarrow{\psi_R} \wh{K}_*(R,I), 
\end{equation}
where the latter is the kernel of the canonical map $K_*(R) \to K_*(R/I)$.

If $k$ is a field and $X$ is a scheme over $k$, let $\sK^M_{n,X}$ be the Zariski sheaf on $X$ whose stalk
at a point $x$ is $K^M_{n}(\sO_{X,x})$ for $n \ge 0$. In \cite{Kerz10},
Kerz actually shows that there is a Zariski sheaf $\wh{\sK}^M_{n,X}$ on $X$ with a natural surjective map
$\sK^M_{n,X} \to \wh{\sK}^M_{n,X}$ such that the stalk of $\wh{\sK}^M_{n,X}$ at $x$ is
$\wh{K}^M_n(\sO_{X,x})$.
 If $X = \Spec(A)$, we let $\wh{K}^M_n(A) = H^0(X, \wh{\sK}^M_{n,X})$.
If $I \subset A$ is an ideal, we let $\wh{K}^M_n(A,I)$ be the kernel of the canonical map
$\wh{K}^M_n(A) \to \wh{K}^M_n(A/I)$.

\begin{defn}\label{defn:Image-rel}
  For $n \ge 0$, we let $\ov{K^M_n(A,I)}$ denote the image of the canonical map
  $K^M_n(A,I) \to \wh{K}^M_n(A,I)$. We similarly define $\ov{K^M_n(A)}$ to be the image of the  canonical map $K^M_n(A) \to \wh{K}^M_n(A)$
  \end{defn}

\begin{rem}
  Observe that if $A$ is a local ring, then $\ov{K^M_n(A)} = \wh{K}^M_n(A) $ but
  it may happen that $ \ov{K^M_n(A,I)} \neq \wh{K}^M_n(A,I)$ if $I \neq 0$ is
  a proper ideal in $A$.
  Moreover, for a semi-local ring $A$, we may have that $\ov{K^M_n(A)} \neq \wh{K}^M_n(A) $.
\end{rem}

If $R$ is a regular semi-local ring containing $k$ and if $F$ is its 
total ring of quotients, then the Gersten complex
\cite{Kato}
\begin{equation}\label{eqn:Gersten}
  0 \to \wh{K}^M_n(R) \to K^M_n(F) \to {\underset{{\rm ht}(\fp) = 1}\oplus} K^M_{n-1}(k(\fp)) \to
  \cdots \to {\underset{{\rm ht}(\fp) = n}\oplus} K^M_{0}(k(\fp))
  \end{equation}
  coincides with the Gersten complex for higher Chow groups except at the first place. Bloch \cite{Bloch-1}
  showed that the latter is exact when $R$ is local. In general, it follows from the above definition of $\wh{K}^M_n(R)$
  that ~\eqref{eqn:Gersten} is exact at the first two terms. 

We refer to \cite[\S~3]{GK} for more details about other properties of the improved Milnor
$K$-theory. In this paper, we shall always use the improved
Milnor $K$-theory of rings, whenever it is defined.
For fields, we shall use the notations of Milnor and
improved Milnor $K$-groups interchangeably.

\subsection{R\"ulling-Saito relative Milnor $K$-groups}
\label{sec:RSM}
Let $R$ be a local domain with fraction field $F$ and let $I = (f)$ be a 
principal ideal, where $f \in R$ is a non-zero divisor. 
Let $R_f$ denote the localization $R[f^{-1}]$ obtained by
inverting the powers of $f$. Let $F$ denote the ring of total quotients of
$R$ so that there are inclusions of rings $R \inj R_f \inj F$.
We let $\wh{K}^M_1(R|I) = K^M_1(R,I)$ and for $n \ge 2$, we let
$\wh{K}^M_n(R|I)$ denote the image of the canonical map
of abelian groups
\begin{equation}\label{eqn:RS-0}
\wh{K}^M_1(R|I) \otimes (R_f)^{\times} \otimes \cdots \otimes (R_f)^{\times} \to \wh{K}^M_n(F),
\end{equation}
induced by the product in the Milnor $K$-theory. 
These groups were defined by  R\"ulling-Saito \cite[\S~2.7]{RS} as stalks of a sheaf (see \cite[Definition~2.4 and Lemma~2.1]{RS}).  We shall call $\wh{K}^M_*(R|I)$ the R\"ulling-Saito relative 
Milnor $K$-groups.
The basic relation between the Kato-Saito and R\"ulling-Saito relative Milnor $K$-groups
is given by the following.

\begin{lem}\label{lem:Relation-rel}
  Let $R$ be a regular local domain and let $I = (f)$ be a non-zero principal ideal.
  Then there is a commutative diagram of graded groups
  \begin{equation}\label{eqn:Relation-rel-0}
    \xymatrix@C.8pc{
      K^M_*(R,I) \ar@{^{(}->}[d] \ar@{->>}[r] & \ov{K^M_*(R,I)} \ar@{^{(}->}[r] \ar@{^{(}->}[d] &
      \wh{K}^M_*(R|I) \ar@{^{(}->}[d] \\
      K^M_*(R) \ar[r] & \wh{K}^M_*(R)  \ar@{^{(}->}[r] & \wh{K}^M_*(R_f).}
  \end{equation}
\end{lem}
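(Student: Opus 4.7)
The plan is as follows. Much of the diagram is essentially tautological, so I would first isolate those parts. The leftmost vertical arrow $K^M_*(R,I)\hookrightarrow K^M_*(R)$ is literally the kernel of $K^M_*(R)\to K^M_*(R/I)$. The surjection $K^M_*(R,I)\twoheadrightarrow \ov{K^M_*(R,I)}$ together with the middle vertical inclusion $\ov{K^M_*(R,I)}\hookrightarrow \wh{K}^M_*(R)$ are immediate from \defref{defn:Image-rel}, applied to the natural map $\eta_R\colon K^M_*(R)\to \wh{K}^M_*(R)$ of \eqref{eqn:Relation}. Commutativity of the left square is then nothing but the restriction of $\eta_R$ to $K^M_*(R,I)$.

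Next I would establish injectivity of the two bottom-right maps. Since $R$ is a regular local domain, the Gersten complex \eqref{eqn:Gersten} for $\wh{K}^M_*$ is exact, so the composite $\wh{K}^M_*(R)\to\wh{K}^M_*(R_f)\hookrightarrow K^M_*(F)$ (the second arrow again being Gersten for the regular local ring $R_f$) is injective, which forces $\wh{K}^M_*(R)\hookrightarrow \wh{K}^M_*(R_f)$. For the top horizontal $\wh{K}^M_*(R|I)\hookrightarrow \wh{K}^M_*(R_f)$, observe that in the defining image \eqref{eqn:RS-0} every tensor slot from the second onward lies in $R_f^\times$, and the first slot comes from $K^M_1(R,I)=1+I\subset R^\times\subset R_f^\times$; hence every generator of $\wh{K}^M_*(R|I)$ already lives in the subgroup $\wh{K}^M_*(R_f)\hookrightarrow \wh{K}^M_*(F)$, so the inclusion into $\wh{K}^M_*(R_f)$ is automatic.

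The substantive step is to produce the top-middle map $\ov{K^M_*(R,I)}\to \wh{K}^M_*(R|I)$ and check that the right square commutes. Here I would invoke \lemref{lem:Semi-local} in its local-ring form (appealing to \remref{remk:Only-local} in the case that $R$ does not contain a field of cardinality at least three): every class in $K^M_n(R,I)$ is a sum of Milnor symbols $\{a_1,\ldots,a_n\}$ for which $a_j\in 1+I$ for some $j$. Writing such an $a_j$ as $1+fu$ with $u\in R$, the image of this symbol in $\wh{K}^M_n(R_f)$ is manifestly one of the generators appearing in \eqref{eqn:RS-0} of $\wh{K}^M_n(R|I)$. This defines the desired map on $\ov{K^M_*(R,I)}$, and commutativity of the right square is transparent on these generators because both routes deliver the same symbol in $\wh{K}^M_n(R_f)$.

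Finally, injectivity of $\ov{K^M_*(R,I)}\hookrightarrow \wh{K}^M_*(R|I)$ is forced by the commutativity already achieved: the composite $\ov{K^M_*(R,I)}\to \wh{K}^M_*(R|I)\hookrightarrow \wh{K}^M_*(R_f)$ equals the injective composite $\ov{K^M_*(R,I)}\hookrightarrow \wh{K}^M_*(R)\hookrightarrow \wh{K}^M_*(R_f)$ from the previous paragraph. The only delicate point, and the step I expect to be the real obstacle, is the application of \lemref{lem:Semi-local}/\remref{remk:Only-local} in the third paragraph; one must check that the hypotheses are in force for the given regular local domain $R$, which is ultimately ensured by the characteristic assumption running through the paper.
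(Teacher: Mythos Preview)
Your argument is correct and follows the same route as the paper: reduce to showing that the image of $K^M_*(R,I)$ in $K^M_*(F)$ lands in $\wh{K}^M_*(R|I)$, and verify this on the generators supplied by \lemref{lem:Semi-local}. Two small slips to fix: (i) $R_f$ is regular but not local in general, so your parenthetical ``Gersten for the regular local ring $R_f$'' is misphrased --- the injection $\wh{K}^M_*(R_f)\hookrightarrow K^M_*(F)$ follows instead from Gersten on stalks of the regular scheme $\Spec R_f$; (ii) a symbol $\{a_1,\ldots,a_n\}$ with $a_j\in 1+I$ is not \emph{manifestly} a generator from \eqref{eqn:RS-0} unless $j=1$, since in that definition the $(1+I)$-entry sits in the first slot --- the paper closes this by invoking anti-commutativity of $K^M_*(F)$ to permute $a_j$ to the front, and you should say the same.
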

\begin{proof}
 Let $F$ be the fraction field of $R$. 
We then note that the image of $K^M_*(R,I)$ in
  $ K^M_*(F)$ under the composite map
  $K^M_*(R) \surj \wh{K}^M_*(R) \inj \wh{K}^M_*(F)$ is $\ov{K^M_*(R,I)}$.
  We therefore only need to verify that the image of this composite map lies in the subgroup
  $\wh{K}^M_*(R|I) \subset \wh{K}^M_*(R_f)$. 

To prove this claim, it suffices to show that
  the canonical map $\zeta_R \colon K^M_n(R) \to K^M_n(F)$ sends $K^M_n(R,I)$ into $\wh{K}^M_n(R|I) \subset K^M_n(F)$ for
  all $n \ge 1$. We can assume $n \ge 2$ as the assertion is clear for $n =1$.

  Now, by \lemref{lem:Semi-local}, we need to show that if $\un{a} = \{a_1, \ldots , a_n\} \in K^M_n(R)$
  is such that
  $a_i \in K^M_1(R,I)$ for some $1 \le i \le n$, then $\zeta_R(\un{a}) \in \wh{K}^M_n(R|I)$.
  In other words, we have to show that as an element of $K^M_n(F)$, the symbol $\un{a}$ actually lies
  in $\wh{K}^M_n(R|I)$. However, this is immediate (see ~\eqref{eqn:RS-0})
  because the ring $K^M_*(F)$ is anti-commutative \cite[Lemma~1.1]{Milnor-1}.
  \end{proof}

  \subsection{Connection between $\wh{K}^M_*(R,I)$ and $\wh{K}^M_*(R|I)$}
  \label{sec:relation-rel-1}
  The canonical
  map $K^M_*(R,I) \to \wh{K}^M_*(R|I)$ of \lemref{lem:Relation-rel} in general
may not factor through
    $\wh{K}^M_*(R,I)$. We shall show however that this is indeed the case in some situations
    if we replace $K^M_*(R,I)$
    and $\wh{K}^M_*(R|I)$ by the pro-abelian groups $\{K^M_*(R,I^m)\}_{m}$ and
    $\{\wh{K}^M_*(R|I^m)\}_{m}$, respectively. In this subsection, we construct a map in the
    opposite direction, which is slightly easier.

    Let $R$ be a semi-local ring
     with the  maximal ideals $\fm_1, \dots, \fm_r$.
Let $R[T]$ denote the
    polynomial ring over $R$ and let $A$ denote the localization of $R[T]$
obtained by inverting all polynomials having invertible constant term. 
Then $A$ is a semi-local ring of Krull dimension $\dim(R) + 1$ and the 
maximal ideals $\fm_iA + (T)$, for $1\leq i \leq r$.
    We now let $R$ be a local ring with maximal ideal $\fm$ and let $R(T)$ be the localization of $A[T^{-1}]$ at maximal ideal $\fm[T^{\pm 1}]$.    Then we have the inclusions $R[T] \inj A \inj R(T)$. The ring $R(T)$ is 
local with infinite residue  
    field $\frac{R}{\fm}(T)$. When $R$ is a field, then 
$A = R[T]_{(T)}$. If $R$ is an integral domain with fraction
    field $F$, then $A$ is an integral domain with fraction field $F(T)$.

We now fix a local integral domain $R$ containing a field. Let $A$ be the local ring
defined above. We fix the ideal $I = (T) \subset A$. Let $F$ denote the fraction field of $R$.
The key lemma to connect $\wh{K}^M_*(A|I)$ with $\wh{K}^M_*(A,I)$ is the following.

\begin{lem}\label{lem:Milnor-0}
Assume that $R$ is a regular local ring. 
Then the following inclusions hold for every pair of integers $n \ge 0, m \ge 1$.
 \begin{equation}\label{eqn:Milnor-0-0}
    \wh{K}^M_{n+1}(A|I) \subset (1 + I)\wh{K}^M_n(A) \subset \ov{K^M_{n+1}(A,I)} \subset \wh{K}^M_{n+1}(A, I).
  \end{equation}
  \begin{equation}\label{eqn:Milnor-0-1}
    \wh{K}^M_{n+1}(A|I^{m+1}) \subset (1 + I^m)\wh{K}^M_n(A) \subset \ov{K^M_{n+1}(A, I^m)}
      \subset \wh{K}^M_{n+1}(A, I^m).
  \end{equation}
\end{lem}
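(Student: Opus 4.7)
The plan is to prove each of the three inclusions in each chain, working from right to left. The key structural observation I will use is that, since $R$ is a regular local ring, the ring $A$ is itself a regular local ring of Krull dimension $\dim R + 1$, hence a UFD; moreover, $T$ is a regular parameter of $A$ and is therefore a prime element, so every unit of $A_T = A[T^{-1}]$ has the form $c T^r$ with $c \in A^\times$ and $r \in \Z$.

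The rightmost inclusion $\ov{K^M_{n+1}(A, I^m)} \subset \wh{K}^M_{n+1}(A, I^m)$ in both chains is immediate from the definition of $\ov{K^M_{n+1}(A, I^m)}$ as the image of the canonical map $K^M_{n+1}(A, I^m) \to \wh{K}^M_{n+1}(A, I^m)$. For the middle inclusion $(1+I^m)\wh{K}^M_n(A) \subset \ov{K^M_{n+1}(A, I^m)}$, I plan to use that $A$ is local, so that the natural map $K^M_n(A) \twoheadrightarrow \wh{K}^M_n(A)$ is surjective (the remark following \defref{defn:Image-rel}). Given $u \in 1+I^m = K^M_1(A, I^m)$ and $y \in \wh{K}^M_n(A)$, I choose a lift $\tilde{y} \in K^M_n(A)$; the product $u \cdot \tilde{y} \in K^M_{n+1}(A, I^m)$ then maps to $u \cdot y \in \ov{K^M_{n+1}(A, I^m)}$, as required.

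The main work lies in the leftmost inclusion. A generator of $\wh{K}^M_{n+1}(A|I^{m+n})$ has the form $\{1+T^{m+n}g, a_1, \ldots, a_n\} \in \wh{K}^M_{n+1}(F)$ with $g \in A$ and $a_i \in A_T^\times$. Using $A_T^\times = A^\times \cdot T^{\Z}$, multilinearity, anti-commutativity, and the identity $\{x, x\} = \{x, -1\}$, I reduce to generators $\{1+T^{m+n}g, b_1, \ldots, b_n\}$ in which each $b_i \in A^\times \cup \{T\}$ and at most one $b_i$ equals $T$. The remaining $T$-slot is eliminated using the Steinberg identity $\{1+T^s h, -T^s h\} = 0$, valid in $\wh{K}^M_2(F)$, from which one extracts the key relation $s\{1+T^s h, T\} = \{1+T^s h, -1\} - \{1+T^s h, h\}$; combined with a judicious choice $h \leftarrow h_0^s$ that absorbs the integer $s$ by multilinearity, this rewrites the problematic $\{u, T\}$ as an element of $(1+I)\wh{K}^M_1(A)$. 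Iterating this reduction, one elimination per Milnor slot containing $T$, the exponent of $T$ in the first slot drops by at most one per step, which accounts for the shift from $m+n$ to $m$ in the final bound. The main obstacle is the careful bookkeeping needed to simultaneously absorb the integer coefficients produced by Steinberg, to handle non-unit auxiliary elements such as $h$ by further UFD-factorization along the prime $T$, and to guarantee that only at most $n$ powers of $T$ are consumed in the first slot across the entire iteration.
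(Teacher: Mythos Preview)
Your treatment of the two rightmost inclusions is correct and matches the paper. The gap lies in the elimination of the single $T$-slot. After your reduction you must show, for arbitrary $g\in A$ and $s\ge 2$, that $\{1+T^{s}g,\,T\}\in (1+T^{s-1}A)\,\wh{K}^M_1(A)$ inside $\wh{K}^M_2(F)$. Your Steinberg computation only yields $s\cdot\{1+T^{s}g,\,T\}=-\{1+T^{s}g,\,-g\}$; this leaves two obstructions that your outline does not resolve. First, the factor $s$ cannot be cancelled in the abelian group $\wh{K}^M_2(F)$, and the substitution ``$h\leftarrow h_0^{s}$'' is not available because $g$ is prescribed, not chosen. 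Second, even setting that aside, the element $-g$ need not lie in $A^{\times}$; your ``UFD-factorization along the prime $T$'' only lets you strip powers of $T$ from $g$, but the $T$-free part of $g$ is merely an element of $A\setminus TA$ and is generically a non-unit (e.g.\ any element of $\fm_R A$), so $\{1+T^{s}g,\,-g\}$ is still not visibly in $(1+T^{s-1}A)\,A^{\times}$.

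The paper circumvents both obstructions with a multiplicative factorisation of the \emph{first} slot rather than the second. Writing $t=-gT^{s-1}$ so that $1+T^{s}g=1-tT$, one sets $v''=-(1+T)\in A^{\times}$ and $v'=(1-t(1+T))^{-1}\in A^{\times}$ (a unit because $t(1+T)\in\fm_A$); a direct check gives $(1+v't)(1+v''t)=1-tT$ and $1+v't,\,1+v''t\in 1+T^{s-1}A$. Then
\[
\{1+T^{s}g,\,T\}=-\{1-tT,\,t\}=-\{1+v't,\,t\}-\{1+v''t,\,t\}=\{1+v't,\,-v'\}+\{1+v''t,\,-v''\},
\]
each term now lying in $(1+T^{s-1}A)\,A^{\times}$. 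This single-step drop from exponent $s$ to $s-1$ is then fed into an induction on $n$ (not a direct $n$-fold iteration on a fixed symbol) to obtain the shift from $m+n$ to $m$.
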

\begin{proof}
The map $K^M_*(A) \to \wh{K}^M_*(A)$ is surjective as $A$ is local.
This immediately implies 
  the second inclusions in ~\eqref{eqn:Milnor-0-0}
and ~\eqref{eqn:Milnor-0-1}. 
So we only
  need to show the first set of inclusions. Since $A/(T)= R$ is a regular local ring, $I$ defines an effective Cartier divisor on 
 $\Spec(A)$ which is regular (and hence simple normal crossing). The first inclusion in  ~\eqref{eqn:Milnor-0-0} now 
 follows from  \cite[Proposition~2.8(2)]{RS}. To prove ~\eqref{eqn:Milnor-0-1}, we first 
 observe that 
 $(A[T^{-1}])^{\times} = A^{\times} \cdot T^{\Z}$. Since $\{T, T\} = \{T, -1\}$ in $K_2^M(F(T))$, 
 it follows that  $\wh{K}^M_{n+1}(A|I^{m+1})$ is generated by the subgroup 
 $(1+I^{m+1}) \wh{K}^M_n(A) $ and the element of the form $\{1+aT^{m+1}, T, u_1, \dots, u_{n-1}\} \in K_{n+1}^M(F(T))$, with $a\in A$ and $u_i\in A^{\times}$. It therefore suffices to show that for $a\in A$, we have $\{1+aT^{m+1}, T\} \in (1+I^m)\cdot A^{\times} \subset K^M_2(F(T))$.   But this follows 
 from   \cite[Lemma~2.7(2)]{RS}. 
 This completes the proof of the lemma.
  \end{proof}

  \enlargethispage{10pt}
  
    Let $R$ be a regular local ring containing a field. 
We let $\wt{K}^M_n(R_m) := {\rm Ker}(\wh{K}^M_n(R_m) \surj \wh{K}^M_n(R))$.  
To apply \lemref{lem:Milnor-0}, we observe that the canonical injection
$R[T] \inj A$ induces an isomorphism $R_m \xrightarrow{\cong} {A}/{I^{m+1}}$ for all $m \ge 0$.
Since the maps $K^M_n({A}/{I^{m+1}}) \to \wh{K}^M_n({A}/{I^{m+1}})$ and
$K^M_n(A) \to K^M_n({A}/{I^{m+1}})$ are surjective, 
our assumption implies that 
\[
  0 \to \wh{K}^M_n(A, I^{m+1}) \to \wh{K}^M_n(A) \to \wh{K}^M_n({A}/{I^{m+1}}) \to 0
\]
is an exact sequence. Using this, we therefore see 
that the canonical restriction map $\wh{K}^M_n(A) \surj \wh{K}^M_n(R_m)$
induces a natural (in $R$) isomorphism
\begin{equation}\label{eqn:Milnor-1}
\theta_{R_m} \colon \frac{\wh{K}^M_n(A, I)}{\wh{K}^M_n(A, I^{m+1})}
  \xrightarrow{\cong} \wt{K}^M_n(R_m).
  \end{equation}
  
  Since the map $R \to {A}/I$ is an isomorphism, the quotient maps
  $K^M_n(A) \surj K^M_n({A}/{I})$ and $\wh{K}^M_n(A) \surj \wh{K}^M_n(A/I)$ compatibly split via the augmentation.
  This implies that the induced map on the kernels $K^M_n(A,I) \to \wh{K}^M_n(A,I)$ is also surjective.
  In particular, the map
  \begin{equation}\label{eqn:Milnor-1-0}
    \frac{\ov{K^M_n(A,I)}}{\ov{K^M_n(A, I^{m+1})}} \to \frac{\wh{K}^M_n(A, I)}{\wh{K}^M_n(A, I^{m+1})}
  \end{equation}
  is surjective.

Recall from \lemref{lem:Relation-rel} that $\ov{K^M_n(A, I^{m})}$ is a 
subgroup of  $\wh{K}^M_n(A|I^{m})$ for every $m \ge 1$. 
The main result we wished to prove in this section is the following.

 \begin{prop}\label{prop:Milnor-reln-main}
 Let $R$, $A$ and $F$ be as in \lemref{lem:Milnor-0} and let $n \ge 0$ be an integer.
Then the kernel (resp. cokernel) of the natural morphism of pro-abelian groups
$\{\ov{K^M_n(A, I^{m})}\}_m \to \{\wh{K}^M_n(A|I^m)\}_m$
is bounded by 0 (resp. 1).
In particular, the inclusions $\wh{K}^M_n(A|I^m) \inj \wh{K}^M_n(F(T))$ 
induce a morphism of pro-abelian groups
   \begin{equation}\label{eqn:Milnor-reln-main-0}
     \left\{\frac{\wh{K}^M_n(A|I)}{\wh{K}^M_n(A|I^m)}\right\}_{m}
     \to  \left\{\frac{\wh{K}^M_n(A, I)}{\wh{K}^M_n(A, I^{m})}\right\}_{m}
     \end{equation}
with $\lambda(m) = m+1$ whose cokernel is bounded by zero.
\end{prop}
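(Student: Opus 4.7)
The plan is to deduce both assertions from \lemref{lem:Milnor-0}. The first assertion is essentially a direct consequence: at each level, the map $\ov{K^M_n(A,I^m)} \to \wh{K}^M_n(A|I^m)$ is the inclusion furnished by \lemref{lem:Relation-rel}, so the kernel is zero at every level and the pro-kernel is bounded by zero. For the cokernel bound by $n$, I need the structure map $C_{m+n} \to C_m$ of the pro-cokernel to vanish, where $C_m = \wh{K}^M_n(A|I^m)/\ov{K^M_n(A,I^m)}$; equivalently, $\wh{K}^M_n(A|I^{m+n}) \subset \ov{K^M_n(A,I^m)}$ as subgroups of $\wh{K}^M_n(A[T^{-1}])$. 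This is immediate from \eqref{eqn:Milnor-0-1} applied with $n$ replaced by $n-1$ (with the $n=0$ case being trivial because all groups vanish).

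For the second assertion, I first construct the pro-morphism with $\lambda(m) = m+n$. By the first inclusion in \eqref{eqn:Milnor-0-0}, $\wh{K}^M_n(A|I) \subset (1+I)\wh{K}^M_{n-1}(A) \subset \wh{K}^M_n(A)$; since $1+I$ maps to $1$ in $(A/I)^{\times}$, the image of $\wh{K}^M_n(A|I)$ in $\wh{K}^M_n(A/I)$ vanishes, placing $\wh{K}^M_n(A|I)$ inside $\wh{K}^M_n(A,I)$. The same argument applied to \eqref{eqn:Milnor-0-1} shows $\wh{K}^M_n(A|I^{m+n}) \subset \wh{K}^M_n(A,I^m)$. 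Together these give a well-defined map
\[
f_m \colon \wh{K}^M_n(A|I)/\wh{K}^M_n(A|I^{m+n}) \to \wh{K}^M_n(A,I)/\wh{K}^M_n(A,I^m),
\]
compatible with the structure maps of both pro-objects.

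The most substantive step, which I expect to be the main obstacle, is proving that the pro-cokernel of $\{f_m\}$ is bounded by zero, equivalently that each $f_m$ is surjective. I plan to derive this from the stronger equality $\wh{K}^M_n(A|I) = \wh{K}^M_n(A,I)$ inside $\wh{K}^M_n(A[T^{-1}])$. Since $R \cong A/I$ and the projection $A \twoheadrightarrow A/I$ admits the canonical section, the short exact sequences for $K^M_n$ and $\wh{K}^M_n$ split compatibly with the surjection $K^M_n(A) \twoheadrightarrow \wh{K}^M_n(A)$ (which is surjective because $A$ is local); this forces $K^M_n(A,I) \to \wh{K}^M_n(A,I)$ to be surjective, so $\ov{K^M_n(A,I)} = \wh{K}^M_n(A,I)$, exactly as in the argument preceding \eqref{eqn:Milnor-1-0}. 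Combined with the sandwich $\ov{K^M_n(A,I)} \subset \wh{K}^M_n(A|I) \subset \wh{K}^M_n(A,I)$ (the first inclusion from \lemref{lem:Relation-rel}, the second from the previous paragraph), all three subgroups coincide, whence the surjectivity of each $f_m$ is immediate.
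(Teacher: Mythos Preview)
Your proposal is correct and follows essentially the same route as the paper: both deduce the first assertion directly from \lemref{lem:Relation-rel} and \lemref{lem:Milnor-0}, construct the pro-morphism~\eqref{eqn:Milnor-reln-main-0} from the containments in~\eqref{eqn:Milnor-0-0} and~\eqref{eqn:Milnor-0-1}, and prove levelwise surjectivity via the equality $\wh{K}^M_n(A|I)=\ov{K^M_n(A,I)}=\wh{K}^M_n(A,I)$ obtained from the splitting of $A\twoheadrightarrow A/I$. The only cosmetic difference is that you re-derive the inclusion $\wh{K}^M_n(A|I)\subset\wh{K}^M_n(A,I)$ by hand, whereas the paper simply reads it off from~\eqref{eqn:Milnor-0-0} (which in fact gives the sharper $\wh{K}^M_n(A|I)\subset\ov{K^M_n(A,I)}$, making your sandwich argument redundant).
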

   \begin{proof}
     For $n = 0$, all the groups are zero. For $n \ge 1$, the first assertion is a direct
     consequence of Lemmas~\ref{lem:Relation-rel} and
~\ref{lem:Milnor-0}. Since the map $\ov{K^M_n(A,I)} \to
     \wh{K}^M_n(A|I)$ is an isomorphism, also by \lemref{lem:Milnor-0}, 
it follows that the kernel of the map
   \[
     \left\{\frac{\ov{K^M_n(A,I)}}{\ov{K^M_n(A, I^{m})}}\right\}_{m} \to
     \left\{\frac{\wh{K}^M_n(A|I)}{\wh{K}^M_n(A|I^m)}\right\}_{m}
   \]
   is bounded by $1$ and the cokernel is bounded by zero.
Equivalently, 
$\left\{\frac{\wh{K}^M_n(A|I^m)}{\ov{K^M_n(A, I^{m})}}\right\}_{m}$
is bounded by $1$. 
By combining this with ~\eqref{eqn:Milnor-1-0}, we see that
the inclusions $\wh{K}^M_n(A|I^m) \inj \wh{K}^M_n(A[T^{-1}])$ 
induce a morphism of pro-abelian groups such as in
~\eqref{eqn:Milnor-reln-main-0} with $\lambda(m) = m+1$.
It is clear that the map 
$\frac{\wh{K}^M_n(A|I)}{\wh{K}^M_n(A|I^{m+1})}
     \to \frac{\wh{K}^M_n(A, I)}{\wh{K}^M_n(A, I^{m})}$
is surjective for each $m \ge 1$ since 
$\wh{K}^M_n(A|I) = \ov{K^M_n(A,I)} = \wh{K}^M_n(A, I)$.
This finishes the proof.
\end{proof}

\section{The de Rham-Witt complex and $K$-theory}\label{sec:DWC}
  Proposition~\ref{prop:Milnor-reln-main} is not quite enough to prove our main results.
  We need the map ~\eqref{eqn:Milnor-reln-main-0} to be actually an isomorphism. We shall
  prove this stronger assertion in this section using the de Rham-Witt complex. We shall use this
  isomorphism in \S~\ref{sec:CCM-MK} to obtain our cycle class map to the 
relative Milnor $K$-theory of truncated polynomial rings.
  We shall also use the de Rham-Witt complex to prove some more results on 
Milnor and Quillen $K$-groups of truncated polynomial rings in \S~\ref{sec:MQ}. 

We shall not recall the definition of the de Rham-Witt complex here.
  Instead, we refer the reader to \cite{Hesselholt-Ac} and \cite[\S~1]{R} for its definition
  and basic properties. We only recall that for a regular semi-local ring $R$ and integer $m \ge 1$,
  there are natural isomorphisms of abelian groups
  \begin{equation}\label{eqn:Witt-vec}
    \gamma_{R,m} \colon \W_m(R) \xrightarrow{\cong} \frac{(1 + TR[[T]])^{\times}}
    {(1 + T^{m+1}R[[T]])^{\times}} \xrightarrow{\cong} \wt{K}^M_1(R_m);
   \end{equation}
   \[
     \gamma_{R,m}(\un{a}) = \stackrel{m}{\underset{i = 1}\prod} (1 - a_iT^i).
     \]

     We shall often write $\gamma_{R,m}(\un{a}) = \gamma_{R,m}((a_1 \ldots , a_m))$ as
     $\gamma(\un{a})$ if the context of its usage is clear. For any $a \in R$, we recall
     that $[a] = (a, 0, \ldots , 0) \in \W_m(R)$ denotes the Teichm\"uller lift of $a$.  Note that $\gamma_{R,m}$ is clearly natural in $R$ and $m \ge 1$.
     The following lemma is a direct consequence of 
\cite[Proposition~2.3]{KP-2}.
     We state it separately as we will need it a few times in our proofs.

     \begin{lem}\label{lem:DWC-inj}
       Let $R$ be a regular semi-local ring containing a field and let $F$ be 
its total ring of quotients.
       Then the canonical map $\W_m\Omega^n_R \to \W_m\Omega^n_F$ is injective for all
       $m \ge 1$ and $n \ge 0$.
       \end{lem}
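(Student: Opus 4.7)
The plan is to reduce the statement to the case of a regular local domain and then invoke the Gersten-type resolution for the de Rham-Witt complex over such rings, which is exactly the content of the cited results.

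First, I would reduce to the case where $R$ is a regular semi-local integral domain. A regular semi-local ring decomposes as a finite product of rings corresponding to its connected components, and the de Rham-Witt functor respects such finite products; so after projecting onto a component I may assume $R$ is an integral domain with fraction field $F$ (which then equals the total ring of quotients). Next, I would pass from semi-local to local: Lemma~7.1.1 of \cite{KP-2} gives an injection $\W_m\Omega^n_R \hookrightarrow \prod_{\mathfrak{m}} \W_m\Omega^n_{R_\mathfrak{m}}$, where $\mathfrak{m}$ ranges over the maximal ideals of $R$. Each $R_\mathfrak{m}$ is a regular local ring with the same fraction field $F$, so it is enough to prove injectivity of $\W_m\Omega^n_{R_\mathfrak{m}} \to \W_m\Omega^n_F$ for each such $\mathfrak{m}$.

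For a regular local ring containing a field, the key input is Proposition~5.0.1 of \cite{KP-2}, which establishes the exactness of the Gersten complex
\[
0 \to \W_m\Omega^n_R \to \W_m\Omega^n_F \to \bigoplus_{\mathrm{ht}(\mathfrak{p})=1} \W_m\Omega^{n-1}_{k(\mathfrak{p})} \to \cdots .
\]
Exactness at the leftmost term is precisely the desired injectivity. Combining this with the earlier reductions yields the lemma.

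The only genuine obstacle in this strategy is the Gersten resolution itself, which is a nontrivial theorem about de Rham-Witt sheaves on regular schemes in positive characteristic. However, this is exactly what the cited Proposition~5.0.1 and Lemma~7.1.1 of \cite{KP-2} provide, so the proof is effectively a short assembly of these inputs together with the reduction to a local integral domain.
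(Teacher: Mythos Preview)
Your proposal is correct and follows exactly the route the paper indicates: the paper states the lemma as a direct consequence of \cite[Proposition~5.0.1, Lemma~7.1.1]{KP-2} without further elaboration, and you have simply spelled out how these two inputs combine---the reduction to a local integral domain via the sheaf property (Lemma~7.1.1), followed by the Gersten resolution for the de Rham--Witt complex (Proposition~5.0.1).
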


\subsection{Generators of de Rham-Witt complex}\label{sec:DRW*}
We give a generating set of the de Rham-Witt complex of semi-local rings
in this subsection.
After proving this result, we realized that we only need it when the
underlying ring is a field (see the proofs of Lemmas~\ref{lem:Key-lem-1} and
~\ref{lem:Key-lem-2}) to prove the main results of this paper. 
In this special case, a presentation 
of de Rham-Witt complex is already known
by Hyodo-Kato \cite{HK} and R{\"u}lling-Saito \cite{RS}.
We however present this generalization here since it has independent interest.
For instance, it is indispensable for the proof of
\cite[Corollary~6.4]{KP-2}.

\begin{prop}\label{prop:HK-RS}
Let $R$ be a regular semi-local ring containing a field $k$ of characteristic 
$p > 0$ and let 
$m \ge 1, n \ge 0$ be two integers. Assume that either $k$ is infinite 
or $R$ is local. Then the map
\begin{equation}\label{eqn:HK-main-0}
(\W_m(R) \otimes \bigwedge^n_{\Z}R^{\times}) \oplus (\W_m(R) \otimes 
\bigwedge^{n-1}_{\Z} R^{\times}) \to \W_m\Omega^n_R,
\end{equation}
defined by
\[
w \otimes (a_1 \wedge \cdots \wedge a_n) \mapsto w\dlog[a_1] \cdots \dlog[a_n]
\ \mbox{and}
\]
\[
w \otimes (a_1 \wedge \cdots \wedge a_{n-1}) \mapsto dw 
\dlog[a_1] \cdots \dlog[a_{n-1}],
\] 
is surjective. 
\end{prop}
\begin{proof}
When $R$ is a local ring, this result was proven (with a description of the kernel of this map)
by Hyodo-Kato 
\cite[Proposition~4.6]{HK} in the $p$-typical case and the reduction of
the general case to the $p$-typical case was shown by R{\"u}lling-Saito
\cite[Proposition~4.4]{RS}. The new assertion is that 
the surjectivity part of the result of Hyodo-Kato holds for
semi-local rings as well.
 
It suffices to prove the proposition for the $p$-typical de Rham-Witt
complex. This reduction is routine (see the proof of 
\cite[Proposition~4.4]{RS}) and requires no condition on $R$.
We shall use the notations of $p$-typical de Rham-Witt complex
$W_m\Omega^n_R$, where $W_m(-) := \W_{\{1, p, \ldots , p^{m-1}\}}(-)$.

We let $E^n_{R,m}$ denote the abelian group on the left hand side of
~\eqref{eqn:HK-main-0}. 
We need to show that the map $\theta^n_{R,m} \colon
E^n_{R,m} \to W_m\Omega^n_R$ is surjective.
We shall fix $n \ge 0$ and prove that $\theta^n_{R,m}$ is a surjection 
by induction on $m \ge 1$.

If $m = 1$, then we know that $W_m\Omega^n_R \cong \Omega^n_R$.
In this case, it follows from our assumption and
\cite[Lemma~7.4]{GK} that  $R$ is additively generated by its units.
The latter immediately implies the desired surjectivity.

In the general case, we know by \cite[I.3.15.2, p.~576]{Illusie} that
there is an exact sequence
\[
0 \to {\rm gr}^{m}_C W\Omega^n_R \to W_{m+1}\Omega^n_R \to W_{m}\Omega^n_R
\to 0,
\]
where ${\rm gr}^{\bullet}_C W\Omega^n_R$ is the associated graded module
for the canonical filtration on the de Rham-Witt complex of $R$.
One knows that the canonical filtration of $W_{m+1}\Omega^n_R$ coincides 
with its $V$-filtration (see \cite[Lemma~3.2.4]{HM-1}). Equivalently, we have
${\rm gr}^{m}_C W\Omega^n_R = {\rm gr}^{m}_V W\Omega^n_R := 
V^mW_{1}\Omega^n_R + d V^mW_{1}\Omega^{n-1}_R$.

We now consider the commutative diagram with exact 
rows
\begin{equation}\label{eqn:HK-RS-1}
\xymatrix@C.8pc{
& {R \otimes (\bigwedge^n_{\Z}R^{\times} \oplus \bigwedge^{n-1}_{\Z}R^{\times})}
\ar[r]^-{V_m \otimes {id}} \ar[d] & E^n_{R, m+1} \ar[r] \ar[d] &
E^n_{R,m} \ar[r] \ar[d] & 0 \\
0 \ar[r] & {\rm gr}^{m}_V W\Omega^n_R \ar[r] & W_{m+1}\Omega^n_R \ar[r] &
W_{m}\Omega^n_R \ar[r] &  0.}
\end{equation}

The $m =1$ case of the proposition and the expression of
${\rm gr}^{m}_V W\Omega^n_R$ given above show that the
left vertical arrow in ~\eqref{eqn:HK-RS-1} is surjective.
The right vertical arrow is surjective by induction. It follows that
the middle vertical arrow is surjective, as desired.
\end{proof}

\subsection{Milnor $K$-theory and de Rham-Witt complex}
\label{sec:DWC-M}
Let $X$ be a Noetherian scheme. 
For an integer $m \ge 0$, we let $X_m = X \times \Spec(k_m)$.
We let $\wh{\sK}^M_{n, m, X}$ 
be the Zariski sheaf on $X$ whose stalk at a point
$x \in X$ is the Milnor $K$-group $\wh{K}^M_n({\sO_{X,x}[t]}/{(t^{m+1})})$.
We let $\wt{K}^M_{n,m, X}$ be the kernel of the split surjection
$\wh{\sK}^M_{n, m, X} \surj \wh{\sK}^M_{n, X}$.
We let ${\sK}_{n, m, X}$ be the Zariski sheaf on $X$ whose stalk at a point
$x \in X$ is the Quillen $K$-group ${K}_n({\sO_{X,x}[t]}/{(t^{m+1})})$.
We define $\wt{\sK}_{n,m,X}$ just as we defined $\wt{K}^M_{n,m, X}$.

For a point $x \in X$, we shall denote the
ring $A(\sO_{X,x})$ (which is obtained exactly as in
\S~\ref{sec:relation-rel-1}, where $R$ is replaced by $\sO_{X,x}$)
in short by $A_x$. Note that $A_x$ is local.
We let $\wh{\sK}^M_{n, X|I^m}$ denote the Zariski sheaf on $X$ whose
stalk at a point $x \in X$ is the group $\wh{K}^M_n(A_x|I^m)$.
We let $\wh{K}^M_{n, (X,I^m)}$ denote the Zariski sheaf on $X$ whose
stalk at a point $x \in X$ is the group $\wh{K}^M_n(A_x, I^m)$.

We now fix a regular semi-local ring $R$ containing a field $k$ of
characteristic $p > 0$ and let $X = \Spec(R)$.
We let $A := A(R)$ be the semi-local ring
defined in \S~\ref{sec:relation-rel-1} and let $I = (T) \subset A$.
We shall continue to follow the notations of
\S~\ref{sec:Milnor}. 
We shall use the following result of  R{\"u}lling-Saito
which gives an explicit relation between the
de Rham-Witt complex and the R{\"u}lling-Saito relative Milnor $K$-theory.

\begin{thm}\label{thm:RS-main}$($\cite[Theorem~4.8]{RS}$)$
Let $n \ge 0$ and $m \ge 1$ be two integers. Then the map
of sheaves
\begin{equation}\label{eqn:RS-main-0}
\lambda_X \colon \W_m\Omega^n_X \to 
\frac{\wh{\sK}^M_{n+1, X|I}}{\wh{\sK}^M_{n+1, X|I^m}},
\end{equation}
which on local sections is given by
\[
  w \dlog[a_1] \cdots \dlog[a_n] \mapsto \{\gamma(w), a_1, \ldots , a_n\}
\]
and
\[
  dw \dlog[a_1] \cdots \dlog[a_{n-1}] \mapsto 
(-1)^n \{\gamma(w), a_1, \ldots ,a_{n-1}, T\},
\]
is an isomorphism.
\end{thm}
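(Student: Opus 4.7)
The plan is to establish the isomorphism at the level of sheaves by combining the already-existing local result of R{\"u}lling-Saito with a stalk-wise reduction, using the sheaf-theoretic framework that has been set up in the preceding subsections. Since the de Rham-Witt complex is known to be a Zariski sheaf (by \cite[Proposition I.1.13.1]{Illusie}) and the right-hand side is a sheaf by construction with stalks at $x \in X$ equal to $\wh{K}^M_{n+1}(A_x|I)/\wh{K}^M_{n+1}(A_x|I^{m+1})$, the question reduces to producing a well-defined sheaf morphism with the prescribed formula, and then checking that it is an isomorphism on stalks.

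First I would verify that $\lambda_X$ gives a well-defined map of sheaves. By \lemref{lem:HK-RS}, the de Rham-Witt sheaf $\W_m\Omega^n_X$ is (in the semi-local, and in particular the local, setting we need) generated locally by $w\,\dlog[a_1]\cdots\dlog[a_n]$ and $dw\,\dlog[a_1]\cdots\dlog[a_{n-1}]$, so the formulas pin down $\lambda_X$ uniquely on generators. The defining relations on the left-hand side (the Leibniz, Teichm\"uller and Verschiebung relations) translate on the right-hand side into Milnor-symbol identities, which can be checked inside $\wh{K}^M_{n+1}(A_x[T^{-1}])$, and the injectivity in \lemref{lem:DWC-inj} allows such verifications to be carried out after passing to the total ring of fractions, where the anti-symmetry and the identity $\{T,T\}=\{T,-1\}$ make the computations routine.

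Next, to prove that $\lambda_X$ is an isomorphism, I would check this stalk by stalk. At a point $x \in X$, the local ring $\sO_{X,x}$ is regular, the associated ring $A_x$ is a regular local ring, and the induced map on stalks is precisely the map
\[
\W_m\Omega^n_{\sO_{X,x}} \longrightarrow \frac{\wh{K}^M_{n+1}(A_x|I)}{\wh{K}^M_{n+1}(A_x|I^{m+1})}
\]
constructed locally by the same formulas. This is exactly the content of \cite[Theorem 4.8]{RS}, which applies in the local case and yields the desired isomorphism.

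The main obstacle I foresee is surjectivity, since generic generators of $\wh{K}^M_{n+1}(A_x|I)$ have the form $\{1+aT,a_1,\dots,a_n\}$ with $a_i \in (A_x[T^{-1}])^\times$, and one must reduce these (after writing $a_i = u_iT^{j_i}$ with $u_i$ a unit) to the specific form in the image of $\lambda_X$. This reduction uses the anti-symmetry of the Milnor product together with $\{T,T\}=\{T,-1\}$, and iteratively brings one down to the $n=1$ case, which in turn reduces via ~\eqref{eqn:Milnor-1} to Bloch's description of the symbolic relative $K$-group $SC_mK_2$ \cite[Proposition II.3.1.3]{Bloch-IHES}. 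Bloch's argument requires that $R_m$ be additively generated by units and that $\tfrac12 \in R$, which is why the hypothesis $p>2$ is needed and why one must be a little careful in the semi-local case; the first condition can be arranged using \cite[Lemma 6.4]{GK} and our standing regularity hypothesis.
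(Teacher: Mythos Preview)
The paper does not supply its own proof of this theorem: it is stated with attribution to \cite[Theorem~4.8]{RS} and used as a black box. Your proposal is therefore not competing with any argument in the paper; it is filling in the passage from the local statement of R{\"u}lling--Saito to the sheaf statement recorded here. That passage is essentially what you describe in your first three paragraphs: both sides are Zariski sheaves (the left by \cite[Proposition~I.1.13.1]{Illusie}, the right by construction in \S\ref{sec:DWC-M}), the formulas are natural in the local ring, and a morphism of sheaves is an isomorphism as soon as it is so on stalks, which is exactly \cite[Theorem~4.8]{RS}. This is correct and is all that is required.

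Your final paragraph, however, is redundant and slightly muddles the logic. Once you invoke \cite[Theorem~4.8]{RS} for the stalk-level isomorphism, surjectivity is already contained in that citation; you do not need to reproduce the reduction to $n=1$ via anti-symmetry and $\{T,T\}=\{T,-1\}$, nor Bloch's description of $SC_mK_2$. Those arguments belong to the proof of \cite[Theorem~4.8]{RS} itself, not to the sheafification step you are carrying out. If you intend to \emph{reprove} the local result rather than cite it, say so explicitly and keep that separate from the stalk-wise reduction; as written, the two are conflated. (Incidentally, your index $n+1$ on the Milnor side matches the formulas and the usage in \corref{cor:Milnor-truncated-local}; the subscript $n$ in the displayed sheaf quotient of the theorem statement appears to be a typographical slip in the paper.)
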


By combining \propref{prop:Milnor-reln-main}, \lemref{lem:DWC-inj}
and \thmref{thm:RS-main}, 
we now prove the following key result. This will play a crucial role in the
proofs of the main theorems. This result is also used in 
\cite[\S~6]{Morrow} without a proof when $R$ is local.

\begin{thm}\label{thm:Milnor-final}
Let $X$ be a regular scheme over a field $k$ of
characteristic $p > 0$ and let $n \ge 0$ be an integer.
Then the canonical maps of sheaves of pro-abelian groups  
\[
\left\{\frac{\wh{\sK}^M_{n, X|I}}{\wh{\sK}^M_{n, X|I^m}}\right\}_m
\to \left\{\frac{\wh{\sK}^M_{n, (X,I)}}{\wh{\sK}^M_{n, (X,I^m)}}\right\}_m
\to \left\{\wt{\sK}^M_{n, m, X}\right\}_m
\]
are isomorphisms.
\end{thm}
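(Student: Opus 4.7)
The plan is to verify both maps on stalks and then apply \lemref{lem:pro-sheaves} to pass to sheaves of pro-abelian groups. Fix a point $x \in X$, set $R_x = \sO_{X,x}$ (a regular local ring containing $k$), and let $A_x$ denote the associated local ring constructed from $R_x$ exactly as in \S\ref{sec:relation-rel-1}.

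For the second map, I would invoke ~\eqref{eqn:Milnor-1}, which provides for each $m \ge 0$ a natural isomorphism of abelian groups
\[
\theta_{(R_x)_m} \colon \frac{\wh{K}^M_n(A_x, I)}{\wh{K}^M_n(A_x, I^{m+1})} \xrightarrow{\cong} \wt{K}^M_n((R_x)_m),
\]
compatible with the surjections $R_{m+1} \surj R_m$. Hence the second map is an isomorphism of sheaves at each fixed level $m$, and therefore an isomorphism of sheaves of pro-abelian groups.

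For the first map, I would combine \thmref{thm:RS-main} with \propref{prop:Milnor-reln-main}. The former identifies the source stalk $\{\wh{K}^M_n(A_x|I)/\wh{K}^M_n(A_x|I^m)\}_m$ with $\{\W_m\Omega^{n-1}_{R_x}\}_m$ via $\lambda_{R_x}$, while the latter asserts that the first map is a pro-surjection (cokernel bounded by zero). It then remains to show that the kernel of this pro-map vanishes. Composing with the isomorphism $\theta$ from the previous step, I obtain a pro-morphism
\[
\Psi_{R_x} \colon \{\W_m\Omega^{n-1}_{R_x}\}_m \to \{\wt{K}^M_n((R_x)_m)\}_m
\]
which, on the generators of $\W_m\Omega^{n-1}_{R_x}$ provided by \lemref{lem:HK-RS}, sends $w\,\dlog[a_1]\cdots\dlog[a_{n-1}]$ to the Milnor symbol $\{\gamma(w), a_1, \ldots, a_{n-1}\}$ and $dw\,\dlog[a_1]\cdots\dlog[a_{n-2}]$ to the corresponding symbol involving $T$. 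This is precisely the Chow-Witt map of R\"ulling \cite{R} and Krishna-Park \cite{KP-2}, known to be a pro-isomorphism for local $R_x$ in characteristic $p > 2$; for $n = 1$ it reduces to $\gamma_{R_x,m}$ of ~\eqref{eqn:Witt-vec}. Since $\lambda_{R_x}$, $\theta$, and $\Psi_{R_x}$ are all pro-isomorphisms, the first map is also a pro-isomorphism at every stalk. Its kernel and cokernel are thus sheaves of pro-abelian groups whose stalks are bounded by zero; by \lemref{lem:pro-sheaves} the kernel and cokernel vanish as pro-sheaves, yielding the first map as an isomorphism of sheaves of pro-abelian groups.

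The principal obstacle is verifying that the composite $\Psi_{R_x}$ agrees with the Chow-Witt map of \cite{R,KP-2} and that the latter is indeed a pro-isomorphism in our setting. Our stalks are local, so the local case of those references applies. The compatibility itself reduces to a routine calculation on the generators supplied by \lemref{lem:HK-RS}, using the explicit formulas of \thmref{thm:RS-main} and ~\eqref{eqn:Milnor-1}; care is required because $\lambda_{R_x}$ lands in the R\"ulling-Saito relative Milnor group $\wh{K}^M_n(A_x|I^m)$ rather than the Kato-Saito one, and one must track how the identification in \propref{prop:Milnor-reln-main} interacts with the symbolic description on both sides.
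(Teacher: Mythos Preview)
Your argument contains a genuine gap at the crucial step. You assert that the composite
\[
\Psi_{R_x} \colon \{\W_m\Omega^{n-1}_{R_x}\}_m \to \{\wt{K}^M_n((R_x)_m)\}_m
\]
is ``precisely the Chow-Witt map of R\"ulling \cite{R} and Krishna-Park \cite{KP-2}, known to be a pro-isomorphism''. This is a misidentification. The Chow-Witt map $\tau_R$ of \cite{R,KP-2} is an isomorphism $\W_m\Omega^{n-1}_R \xrightarrow{\cong} \TCH^n(R,n;m)$ to the \emph{additive higher Chow groups}, not to the relative Milnor $K$-theory $\wt{K}^M_n(R_m)$; see ~\eqref{eqn:Witt-C}. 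No result in \cite{R} or \cite{KP-2} produces the pro-isomorphism you need. In fact, the statement that $\Psi_{R_x}$ is a pro-isomorphism is exactly \corref{cor:Milnor-truncated-local}, which is derived \emph{from} the theorem you are trying to prove. So the argument is circular.

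The paper's proof avoids this by a different mechanism: it reduces to showing that the kernel of the first map on stalks is bounded by an integer independent of the point $x$. For a \emph{field}, it handles the finite case trivially (both sides pro-vanish) and the infinite case by using that $\ov{K^M_n(A,I^m)} = \wh{K}^M_n(A,I^m)$, then chasing the composite of the two maps in \propref{prop:Milnor-reln-main} to see the kernel is bounded by $n$. For a general regular local ring $R_x$ with fraction field $F$, it sets up a commutative diagram comparing the map for $R_x$ with the map for $F$; the key input is \lemref{lem:DWC-inj}, which gives the level-wise injection $\W_m\Omega^{n-1}_{R_x} \hookrightarrow \W_m\Omega^{n-1}_F$. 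Via \thmref{thm:RS-main} this transports to level-wise injectivity of the middle vertical arrow, and then the bound $n$ established for $F$ forces the same bound for $R_x$. The uniformity of this bound in $x$ is precisely what \lemref{lem:pro-sheaves} requires and is what makes the passage to sheaves work; your sketch does not address this uniformity either.
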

\begin{proof}
The theorem is obvious for $n = 1$ from the definitions of
various groups. We thus assume that $n \ge 2$.
Note here that the first map is induced by ~\eqref{eqn:Milnor-0-0} and ~\eqref{eqn:Milnor-0-1}
and second by ~\eqref{eqn:Milnor-1}. Indeed, note that all the sheaves 
$\wh{\sK}^M_{n, X|I}, \wh{\sK}^M_{n, X|I^m}, \wh{\sK}^M_{n, (X,I)}$ and 
$\wh{\sK}^M_{n, (X,I^m)}$ are sub-sheaves of the constant sheaf $K_n(F(T))$, where 
$F$ is the total ring of quotients of $R$. It then follows by ~\eqref{eqn:Milnor-0-0} and ~\eqref{eqn:Milnor-0-1} that 
$\wh{\sK}^M_{n, X|I} = \wh{\sK}^M_{n, (X,I)}$ and 
$\wh{\sK}^M_{n, X|I^m} \subset \wh{\sK}^M_{n, (X,I^{m-n})}$. A similar argument yields the second map using ~\eqref{eqn:Milnor-1}
so that it is a level-wise isomorphism.
We only need to show that the first map is an isomorphism too.
By \lemref{lem:pro-sheaves} and \propref{prop:Milnor-reln-main},
we only need to show that if $R$ is local, then the kernel of
the level-wise surjective map
\begin{equation}\label{eqn:Milnor-final-0}
\left\{\frac{\wh{K}^M_n(A|I)}{\wh{K}^M_n(A|I^m)}\right\}_{m}
     \surj \left\{\frac{\wh{K}^M_n(A, I)}{\wh{K}^M_n(A, I^{m})}\right\}_{m}
\end{equation}
is bounded by an integer not depending on $R$.

We first assume that $R$ is a field.
If $R$ is a finite field, then it is well known that
$\W_m\Omega^{n-1}_R = 0$. It follows from \propref{prop:Milnor-reln-main} and
\thmref{thm:RS-main} that both sides of ~\eqref{eqn:Milnor-final-0}
are bounded by zero. 
If $R$ is an infinite field, then $A$ contains an infinite field. Hence, the map
$\ov{K^M_n(A, I^m)} \to \wh{K}^M_n(A,I^m)$ is an isomorphism (see \cite[Proposition~10 (5)]{Kerz10}). We can therefore replace $\wh{K}^M_n(A,I^m)$ by
$\ov{K^M_n(A, I^m)}$ in ~\eqref{eqn:Milnor-final-0}.
We now use the maps
\[
\left\{\frac{\wh{K}^M_n(A|I)}{\wh{K}^M_n(A|I^m)}\right\}_{m}
     \to  \left\{\frac{\ov{K^M_n(A, I)}}{\ov{K^M_n(A, I^{m})}}\right\}_{m}
\to \left\{\frac{\wh{K}^M_n(A|I)}{\wh{K}^M_n(A|I^m)}\right\}_{m},
\]
where the second arrow is
induced by the inclusions $\ov{K^M_n(A, I^m)} \inj \wh{K}^M_n(A|I^m)$.
We now note that the function $\lambda \colon \N \to \N$ associated to the
first arrow is $\lambda(m) = m+1$ and it is identity for the second arrow.
More precisely, the composite arrow is induced by the canonical
surjections $\frac{\wh{K}^M_n(A|I)}{\wh{K}^M_n(A|I^{m+1})}
\surj \frac{\wh{K}^M_n(A|I)}{\wh{K}^M_n(A|I^m)}$.
It follows immediately that the kernel of the composite arrow
is bounded by $1$. We are therefore done.

In the general case, we let $F$ be the fraction field of $R$. Let
$B = F[T]_{(T)} = A(F)$ and $J = IB \subset B$. It is clear that the diagram
\begin{equation}\label{eqn:Milnor-final-1}
  \xymatrix@C.8pc{
    \{\W_m\Omega^{n-1}_R\}_m \ar[r]^-{\cong} \ar[d] &
    \left\{\frac{\wh{K}^M_{n}(A|I)}{\wh{K}^M_{n}(A|I^{m})}\right\}_m \ar[r] \ar[d] &
    \left\{\frac{\wh{K}^M_n(A, I)}{\wh{K}^M_n(A, I^{m})}\right\}_{m} \ar[d] \\
  \{\W_m\Omega^{n-1}_F\}_m \ar[r]^-{\cong} &
    \left\{\frac{\wh{K}^M_{n}(B|J)}{\wh{K}^M_{n}(B|J^{m})}\right\}_m 
\ar[r] &
    \left\{\frac{\wh{K}^M_n(B, J)}{\wh{K}^M_n(B, J^{m})}\right\}_{m}}
  \end{equation}
is commutative, where the vertical arrows are the canonical base change maps.

It follows from \lemref{lem:DWC-inj} that the left vertical arrow
in ~\eqref{eqn:Milnor-final-1} is level-wise
injective. Since $R$ is local, it follows from \thmref{thm:RS-main} that the 
left horizontal arrows on the top and the bottom are level-wise
isomorphisms. It follows that the middle vertical arrow is
level-wise injective. We showed above that the kernel of the
second horizontal arrow on the bottom is bounded by $1$. 
We deduce that the kernel of the
second horizontal arrow on the top must also be bounded by $1$. 
This proves the theorem.
\end{proof}

Combining \thmref{thm:Milnor-final} and ~\eqref{eqn:Milnor-1}, 
we get the following.

\begin{cor}\label{cor:Milnor-truncated}
Let $R$ be a regular semi-local ring containing a field $k$ of
characteristic $p > 0$ and let $X= \Spec(R)$.  Let $n \ge 0$ be an
integer. 
Then there are isomorphisms of pro-abelian groups
  \[
    \{\W_m\Omega^{n-1}_R\}_m \stackrel{\lambda_R} 
{\underset{\cong}\longrightarrow} 
\left\{H^0(X, {\wh{\sK}^M_{n, X|I}}/{\wh{\sK}^M_{n, X|I^m}})\right\}_m
\stackrel{\theta_R}{\underset{\cong}\longrightarrow} 
\left\{\wt{K}^M_n(R_m)\right\}_m,
  \]
which are natural in $R$. 
\end{cor}
\begin{proof}
Since $U \mapsto \W_m\Omega^{n-1}_{\sO(U)}$ is a Zariski sheaf on $X$
(see \cite[Proposition~I.1.13.1]{Illusie}),
the first isomorphism follows from \thmref{thm:RS-main}.
We now show the second isomorphism.

\thmref{thm:Milnor-final} yields a map
$\{\W_m\Omega^{n-1}_R\}_m \xrightarrow{\theta_R \circ \lambda_R} 
\left\{H^0(X, \wt{K}^M_{n, m, X})\right\}_m$
whose kernel and cokernel are bounded by $1$.
It suffices therefore to show that 
$\wt{K}^M_n(R_m) \cong H^0(X, \wt{K}^M_{n, m, X})$.
Using the augmentation $R_m \surj R$, it enough to show that
$\wh{K}^M_n(R_m) \cong H^0(X, \wh{\sK}^M_{n, m, X})$.
But this is clear from the definition of the improved Milnor
$K$-theory given in \S~\ref{sec:Improved-K} once we observe that
$\wh{\sK}^M_{n, m, X}$ is nothing but the direct image sheaf
$\pi_*(\wh{\sK}^M_{n, X_m})$, where $\pi \colon 
X_m \to X$ is the projection.
Indeed, we have
$H^0(X, \wh{\sK}^M_{n, m, X}) = H^0(X, \pi_*(\wh{\sK}^M_{n, X_m}))
= H^0(X_m, \wh{\sK}^M_{n, X_m}) = \wh{K}^M_n(R_m)$.
\end{proof}

If $R$ is local, the above result is equivalent to the 
following.

\begin{cor}\label{cor:Milnor-truncated-local}
Let $R$ be as in \corref{cor:Milnor-truncated} and $n \ge 0$ an
integer. Assume $R$ is local.
Then there are isomorphisms of pro-abelian groups
  \[
    \{\W_m\Omega^{n-1}_R\}_m \stackrel{\lambda_R} 
{\underset{\cong}\longrightarrow} 
\left\{\frac{\wh{K}^M_n(A|I)}{\wh{K}^M_n(A|I^m)}\right\}_m
\stackrel{\theta_R}{\underset{\cong}\longrightarrow} 
\left\{\wt{K}^M_n(R_m)\right\}_m,
  \]
which are natural in $R$.
\end{cor}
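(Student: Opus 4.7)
The plan is to deduce this directly from Corollary~\ref{cor:Milnor-truncated} by identifying the global sections of the relevant Zariski sheaves with the desired stalk-level groups when $R$ is local. Throughout, let $x_0$ denote the unique closed point of $X = \Spec(R)$. The first observation is that the construction of $A$ in \S~\ref{sec:relation-rel-1} from the local ring $R$ coincides with the construction $A_{x_0}$ applied to the stalk $\sO_{X,x_0} = R$, so $A_{x_0} = A$, and consequently the stalk of $\wh{\sK}^M_{n, X|I^m}$ at $x_0$ is exactly $\wh{K}^M_n(A|I^m)$ for every $m \ge 1$.

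The main step is then to verify that $H^0(X, \wh{\sK}^M_{n, X|I^m}) = \wh{K}^M_n(A|I^m)$. For this, I would use that $\wh{\sK}^M_{n, X|I^m}$ sits as a subsheaf of the constant Zariski sheaf with value $\wh{K}^M_n(F(T))$, where $F$ is the fraction field of $R$: indeed, each stalk $\wh{K}^M_n(A_x|I^m)$ is realized as a subgroup of $\wh{K}^M_n(F(T))$ via the natural inclusions of \lemref{lem:DWC-inj} and \lemref{lem:Relation-rel}. For such a subsheaf of a constant sheaf on $\Spec(R)$ with $R$ local, a global section is an element of $\wh{K}^M_n(F(T))$ lying in $\wh{K}^M_n(A_x|I^m)$ for every $x \in X$; but every such $A_x$ contains $A = A_{x_0}$, so the stalkwise conditions are all implied by membership in $\wh{K}^M_n(A|I^m)$, giving the desired identification. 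The same reasoning applies to $\wh{\sK}^M_{n, X|I}$. Taking quotients and passing to pro-objects, one gets
\[
\bigl\{H^0(X, \wh{\sK}^M_{n, X|I}/\wh{\sK}^M_{n, X|I^m})\bigr\}_m \cong \left\{\frac{\wh{K}^M_n(A|I)}{\wh{K}^M_n(A|I^m)}\right\}_m,
\]
where the identification of the quotient of global sections with the global sections of the quotient follows by \lemref{lem:pro-sheaves} since the obstruction sits in an $H^1$ which vanishes on a local Spec for the subsheaves considered.

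Composing this identification with the isomorphisms of pro-abelian groups $\lambda_R$ and $\theta_R$ supplied by Corollary~\ref{cor:Milnor-truncated} yields the two asserted pro-isomorphisms and naturality in $R$ is inherited from naturality in Corollary~\ref{cor:Milnor-truncated}. The only non-routine point is the identification of global sections in the middle step; everything else is formal. I would expect the potential obstacle to be in checking that the quotient-sheaf subtlety indeed collapses in the local case, but since our sheaves are subsheaves of a constant sheaf on a local spectrum and the closed point lies in every non-empty open, the argument reduces to a direct computation with subgroups of $\wh{K}^M_n(F(T))$.
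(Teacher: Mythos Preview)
Your approach is correct and is essentially what the paper does: it treats the corollary as an immediate restatement of \corref{cor:Milnor-truncated} in the local case, without further argument. The only comment is that you are working harder than necessary. On $X = \Spec(R)$ with $R$ local, the closed point $x_0$ lies in a unique open set, namely $X$ itself, so for \emph{any} Zariski sheaf $\sF$ one has $H^0(X,\sF) = \sF_{x_0}$. Applying this directly to the quotient sheaf $\wh{\sK}^M_{n,X|I}/\wh{\sK}^M_{n,X|I^m}$ (and using that stalks commute with quotients) gives
\[
H^0\bigl(X,\ \wh{\sK}^M_{n,X|I}/\wh{\sK}^M_{n,X|I^m}\bigr) \;=\; \wh{K}^M_n(A|I)\big/\wh{K}^M_n(A|I^m)
\]
in one line, with no need for the subsheaf-of-a-constant-sheaf description or any $H^1$ argument. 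In particular, the appeal to \lemref{lem:pro-sheaves} is unnecessary here (and that lemma is about vanishing of pro-sheaves, not about $H^1$).
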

 
 In particular, we have the following corollary stating that the improved Milnor $K$-groups of the truncated polynomials over finite fields are pro-zero. 
 
 \begin{cor}\label{cor:Milnor-trucated-finite-fields}
 Let $k$ be a finite field and $n\geq 2$. Then the pro-group $\{\wh{K}_n^M(k_m) \}_m $ is zero. 
 \end{cor}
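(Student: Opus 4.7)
The plan is to combine Corollary~\ref{cor:Milnor-truncated-local} with two classical vanishings. The augmentation $k_m \twoheadrightarrow k$ is split by the inclusion $k \hookrightarrow k_m$, and hence induces a natural direct sum decomposition
\[
\wh{K}^M_n(k_m) \;\cong\; \wt{K}^M_n(k_m)\,\oplus\,\wh{K}^M_n(k)
\]
for every $m \ge 1$. Therefore it suffices to prove separately that the pro-abelian group $\{\wt{K}^M_n(k_m)\}_m$ vanishes and that $\wh{K}^M_n(k) = 0$.

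For the second vanishing, since $k$ is a field we have $\wh{K}^M_n(k) = K^M_n(k)$, and a classical theorem of Milnor (see also Bass--Tate) asserts that $K^M_n(\F_q) = 0$ for every $n \ge 2$. For the first vanishing, Corollary~\ref{cor:Milnor-truncated-local}, applied to the regular local ring $R = k$ (which clearly satisfies the hypotheses of \thmref{thm:Milnor-final}), yields an isomorphism of pro-abelian groups $\{\W_m\Omega^{n-1}_k\}_m \cong \{\wt{K}^M_n(k_m)\}_m$. Since $n - 1 \ge 1$, it is enough to show that $\W_m\Omega^{n-1}_k = 0$ for every $m \ge 1$. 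This is a standard property of the (big) de Rham--Witt complex of a perfect field: after decomposing $\W_m\Omega^\bullet_k$ into its $p$-typical components $W_{m'}\Omega^\bullet_k$, one is reduced to showing that $W_{m'}\Omega^i_k = 0$ for $i \ge 1$, which in turn follows from the fact that $W_{m'}(k)$ is \'etale over $W_{m'}(\F_p) = \Z/p^{m'}$ together with the triviality of $\Omega^1_{\Z/p^{m'}/\Z}$.

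Because the substantive technical content has already been packaged into Corollary~\ref{cor:Milnor-truncated-local}, what remains is only to assemble these two classical facts, so no real obstacle is expected. The only point demanding a little care is verifying that the Milnor $K$-theory and Quillen $K$-theory interpretations are compatible with the augmentation splitting, but this is immediate from naturality of the improved Milnor $K$-group construction in the ring.
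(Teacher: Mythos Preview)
Your proof is correct and follows essentially the same approach as the paper: split $\wh{K}^M_n(k_m)$ via the augmentation, kill the constant part using $K^M_n(k)=0$ for $n\ge 2$, and kill the relative part via \corref{cor:Milnor-truncated-local} together with the vanishing $\W_m\Omega^{n-1}_k=0$ for a finite (perfect) field. The paper compresses this into a single sentence (the de Rham--Witt vanishing having already been noted as well known in the proof of \thmref{thm:Milnor-final}), whereas you spell out the splitting and the de Rham--Witt argument explicitly; your closing remark about Quillen $K$-theory is extraneous here, since only the improved Milnor $K$-theory is in play.
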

 \begin{proof}
 It follows from \corref{cor:Milnor-truncated-local}, using the fact that 
the Milnor $K$-groups $K^M_n(k)$ are zero for $n\geq 2$.
 \end{proof}

\begin{remk}\label{remk:bound}
It should be noted that we actually showed in
Corollaries~\ref{cor:Milnor-truncated} and ~\ref{cor:Milnor-truncated-local}
that $\lambda_R$ is a level-wise isomorphism while the
kernel and cokernel of $\theta_R$ are bounded by $1$.
\end{remk}

\begin{remk}\label{remk:BDI}
  Let $R$ be as in \corref{cor:Milnor-truncated-local}, where we assume
  further that it is essentially of finite type over a perfect field and
  $\dim(R) < p$. In this special case, one can obtain a direct proof
  of \corref{cor:Milnor-truncated-local} as follows.

  We consider the maps
  \[
    \{\W_m\Omega^{n-1}_R\}_m \stackrel{\theta_R \circ \lambda_R}{\surj}
    \left\{\wt{K}^M_n(R_m)\right\}_m \surj
    \left\{\wt{K}^{\rm sym}_n(R_m)\right\}_m,
\]
where $\wt{K}^{\rm sym}_n(R_m)$ is the reduced symbolic $K$-theory used by
Bloch \cite{Bloch-IHES}. We observed in \propref{prop:Milnor-reln-main} that
the first arrow is surjective. It must therefore be an isomorphism because
the composite arrow is an isomorphism by \cite[Th{\`e}or{\'e}me~I.5.2]{Illusie}.
We warn the reader however that \thmref{thm:Milnor-final} does not follow
from the local case since we can not work with stalks in order to
show that a given morphism between pro-sheaves is an isomorphism.
\end{remk}

\section{Milnor vs Quillen relative $K$-theories}\label{sec:MQ}
In this section,  we shall use \thmref{thm:Milnor-final} to
prove some relations between the relative Milnor and Quillen $K$-groups
for regular semi-local rings (see \propref{prop:M-Q-pro}). As an 
immediate consequence, we shall prove our main results
sans \thmref{thm:Main-0}(4).

\subsection{Milnor to Quillen 
$K$-theory}\label{sec:Map}
Let $R$ be a regular ring containing a field $k$ and let $X = 
\Spec(R)$. Let $m, n \ge 0$ be two integers. Recall the
sheaves $\wt{\sK}^M_{n,m, X}$ and $\wt{\sK}_{n,m, X}$ from \S~\ref{sec:DWC-M}.
There is a canonical map $\wt{K}_n(R_m) \to H^0(X, \wt{\sK}_{n,m, X})$ which is
functorial in $R$ and $m \ge 0$. We shall need to know that this
map is close to being an isomorphism in order to construct a 
map from the relative Milnor to Quillen $K$-theory. To 
prove a precise statement, we need another result.

Suppose that ${\rm char}(k) = p > 0$.
By the main result of \cite{Hesselholt-tower}, there is a map of
pro-abelian groups
$\left\{\W_m\Omega^{n-1}_R\right\}_m \to \left\{\wt{K}_n(R_m)\right\}_m$
which is natural in $R$. In particular,
there is a map of sheaves of pro-abelian groups
$\left\{\W_m\Omega^{n-1}_X\right\}_m \to
\left\{\wt{\sK}_{n,m, X}\right\}_m$.
This map has the following property.

\begin{lem}\label{lem:Hesselholt-T}
The maps $\left\{\W_m\Omega^{n-1}_R\right\}_m \to 
\left\{\wt{K}_n(R_m)\right\}_m$ and
$\left\{\W_m\Omega^{n-1}_X\right\}_m \to
\left\{\wt{\sK}_{n,m, X}\right\}_m$ are isomorphisms.
\end{lem}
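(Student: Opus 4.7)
The plan is to invoke the explicit computation of the relative $K$-groups of truncated polynomial algebras due to Hesselholt (and Madsen), which forms the main result of \cite{HM}, and then show that after forming the pro-system over $m$ only the ``leading'' summand survives, matching the image of the de Rham--Witt complex.

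First, I would recall the structure of the Hesselholt--Madsen decomposition: for $R$ a regular $\F_p$-algebra satisfying the hypotheses of \cite{HM}, the relative $K$-group $\wt{K}_n(R_m)$ admits a natural direct sum decomposition indexed by positive integers, in which the summand indexed by $j=1$ is canonically $\W_m\Omega^{n-1}_R$ mapping in via the natural map of the lemma (essentially via the cyclotomic trace and Hesselholt's identification of $\mathrm{TR}$ with the de Rham--Witt complex), while the summands indexed by $j\ge 2$ are Witt-complex groups $\W_{\lambda(j,m)}\Omega^{n-1}_R$ whose length $\lambda(j,m)$ is strictly less than $m$ (roughly of order $m/j$). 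This is precisely compatible with the map already given in the excerpt.

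Next, I would analyze the pro-system. The structure maps $\wt{K}_n(R_{m+1}) \to \wt{K}_n(R_m)$ respect the decomposition and restrict on each summand to the Witt-complex restriction $R\colon \W_{\lambda(j,m+1)}\Omega^{n-1}_R \to \W_{\lambda(j,m)}\Omega^{n-1}_R$. For each fixed $j\ge 2$ and each $m_0$, after iterating the structure map sufficiently many times, the image in level $m_0$ of the $j$-th summand at level $m$ becomes zero because $\lambda(j,m)$ cannot keep pace with the target index. Hence the summands with $j\ge 2$ form pro-zero systems, and $\{\wt{K}_n(R_m)\}_m$ reduces in the pro-category to its $j=1$ piece, which is $\{\W_m\Omega^{n-1}_R\}_m$ via the map of the lemma.

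For the sheaf statement, Hesselholt's construction is natural in $R$ and so sheafifies to a decomposition of $\wt{\sK}_{n,m,X}$ with the analogous analysis on stalks. The crucial observation is that the bounds $\lambda(j,m)$ in the pro-analysis depend only on $m$ and $j$, not on the chosen stalk $\sO_{X,x}$; thus \lemref{lem:pro-sheaves} upgrades the stalk-wise pro-isomorphism to a pro-isomorphism of Zariski sheaves. The main obstacle I anticipate is a hypothesis-matching issue: \cite{HM} is typically stated for smooth or $F$-finite $\F_p$-algebras, whereas here $R$ is only assumed regular containing a field. I would resolve this by appealing to N\'eron--Popescu desingularization to write $R$ as a filtered colimit of smooth $k$-algebras and then using the continuity of both $\W_m\Omega^{n-1}_{(-)}$ and $\wt{K}_n((-)_m)$ in the ring (the continuity of the latter in the relative setting being a standard property of relative $K$-theory for nilpotent ideals), so that the isomorphism established in the smooth case passes to the limit.
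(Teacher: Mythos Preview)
Your overall strategy---show the ``extra'' pieces are pro-zero with bounds depending only on $m$ and $p$, then invoke \lemref{lem:pro-sheaves} for the sheaf statement, and use N\'eron--Popescu to reduce from regular to smooth---is exactly the skeleton of the paper's argument. The paper, however, does not attempt to rederive the pro-isomorphism from \cite{HM}; it cites \cite[Theorem~A, Theorem~6.3(iii)]{Hesselholt-tower} directly, where the pro-isomorphism $\{\W_m\Omega^{n-1}_R\}_m \xrightarrow{\sim} \{\wt{K}_n(R_m)\}_m$ (with the needed uniform bounds $N(m)=1$ on the kernel side and $N'(m)$ depending only on $m,p$ on the cokernel side) is already established.

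Your attempt to argue from \cite{HM} contains a genuine misstatement: the main result of \cite{HM} is \emph{not} a direct sum decomposition of $\wt{K}_n(R_m)$ into pieces $\W_{\lambda(j,m)}\Omega^{n-1}_R$ of fixed de Rham degree and varying Witt length. It is a long exact sequence (see \eqref{eqn:K-inj-0} and the displayed sequence in the proof of \lemref{lem:M-Q-pro})
\[
{\underset{i \ge 0}\bigoplus}\, \W_{i+1}\Omega^{n-1-2i}_R \xrightarrow{V_{m+1}}
{\underset{i \ge 0}\bigoplus}\, \W_{(m+1)(i+1)}\Omega^{n-1-2i}_R \xrightarrow{\epsilon}
\wt{K}_n(R_m) \xrightarrow{\partial}
{\underset{i \ge 0}\bigoplus}\, \W_{i+1}\Omega^{n-2-2i}_R,
\]
in which the summands live in \emph{different} de Rham degrees $n-1-2i$ and do \emph{not} split off $\wt{K}_n(R_m)$. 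In particular, your claim that the structure maps ``restrict on each summand'' to Witt-complex restrictions, and the ensuing pro-vanishing argument for the $j\ge 2$ summands, do not make sense as written. One can still extract the pro-isomorphism from this long exact sequence, but the analysis is more delicate (one must control both $\mathrm{coker}(V_{m+1})$ on the $i\ge 1$ pieces and $\ker(V_{m+1})$ in the next term, and track how the transition maps act on $\partial$); this is precisely the content of \cite{Hesselholt-tower}, which is why the paper simply cites it.
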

\begin{proof}
Let $\sE^n_m$ and $\sF^n_m$ denote the kernel and
cokernel of the map $\W_m\Omega^{n-1}_X \to \wt{\sK}_{n,m, X}$, respectively.
By \lemref{lem:pro-sheaves}, it suffices to show that 
for every $m \ge 0$, there are integers $N(m)$ and $N'(m)$ such that
the maps of stalks $\sE^n_{m+ N(m),x} \to \sE^n_{m,x}$ and
$\sF^n_{m + N'(m), x} \to \sF^n_{m,x}$ are zero for all $x \in X$.
But this follows directly from 
\cite[Theorem~A, Theorem~6.3 (iii)]{Hesselholt-tower}.
In fact, one can take $N(m) = 1$ for all $m$ and $N'(m)$ depends
only on $m$ and $p$. The identical proof works for 
the map $\left\{\W_m\Omega^{n-1}_R\right\}_m \to 
\left\{\wt{K}_n(R_m)\right\}_m$ too because Hesselholt's result holds for
$R$ as well (using N{\'e}ron-Popescu desingularization).
\end{proof}

Fix $n \ge 0$. We can now prove:

\begin{lem}\label{lem:Hesselholt-app}
The map $\wt{K}_n(R_m) \to H^0(X, \wt{\sK}_{n,m, X})$ is an isomorphism if
${\rm char}(k) = 0$. The map of
pro-abelian groups
$\left\{\wt{K}_n(R_m)\right\}_m \to \left\{H^0(X, \wt{\sK}_{n,m, X})\right\}_m$
is an isomorphism if ${\rm char}(k) > 0$.
\end{lem}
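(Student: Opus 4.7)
The plan is to treat the two characteristics separately, reducing each to the statement that a suitable de Rham-type sheaf on $X$ computes both $\wt{K}_n(R_m)$ and its sheafified counterpart $\wt{\sK}_{n,m,X}$, together with the fact that this de Rham-type sheaf is genuinely a Zariski sheaf (not merely a presheaf) so that its $H^0$ equals its value on $R$.

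For the positive characteristic case, the argument is essentially immediate from \lemref{lem:Hesselholt-T}. That lemma supplies two isomorphisms of pro-objects, namely $\{\W_m\Omega^{n-1}_R\}_m \xrightarrow{\cong} \{\wt{K}_n(R_m)\}_m$ at the level of pro-abelian groups and $\{\W_m\Omega^{n-1}_X\}_m \xrightarrow{\cong} \{\wt{\sK}_{n,m,X}\}_m$ at the level of pro-sheaves. I would apply the last assertion of \lemref{lem:pro-sheaves} to the second isomorphism to extract an isomorphism $\{H^0(X,\W_m\Omega^{n-1}_X)\}_m \xrightarrow{\cong} \{H^0(X,\wt{\sK}_{n,m,X})\}_m$ of pro-abelian groups. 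Because the de Rham--Witt presheaf is already a Zariski sheaf (\cite[Proposition~I.1.13.1]{Illusie}), the left-hand side is simply $\{\W_m\Omega^{n-1}_R\}_m$. Combining with the first isomorphism from \lemref{lem:Hesselholt-T} and the naturality of all maps involved yields the desired pro-isomorphism $\{\wt{K}_n(R_m)\}_m \xrightarrow{\cong} \{H^0(X,\wt{\sK}_{n,m,X})\}_m$.

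For the characteristic zero case the conclusion is stronger, being an isomorphism for each fixed $m$ rather than only at the pro-level. Here I would use Goodwillie's theorem to identify the relative Quillen $K$-theory of the nilpotent ideal $(t) \subset R_m$ with the relative negative cyclic homology: $\wt{K}_n(R_m) \cong HN_{n-1}(R_m,(t))$, using that relative $K$-groups of nilpotent ideals in $\Q$-algebras are uniquely divisible. The analogous identification holds on each stalk $\sO_{X,x}[t]/(t^{m+1})$, so the same reasoning applied at a point identifies $\wt{\sK}_{n,m,X}$ with the Zariski sheafification of the relative negative cyclic homology. By the HKR theorem for regular $\Q$-algebras, $HN_{n-1}(R_m,(t))$ is computed by a functor built out of relative de Rham complexes of $R_m$ over $R$, which is a Zariski sheaf on $X$ in the sense that its formation commutes with localization on $R$. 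Consequently the natural map $\wt{K}_n(R_m) \to H^0(X,\wt{\sK}_{n,m,X})$ is identified with the tautological isomorphism on the de Rham side.

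The main obstacle I anticipate is the characteristic zero case: the characteristic $p$ case is essentially a formal consequence of \lemref{lem:Hesselholt-T} together with the sheaf-theoretic machinery of \lemref{lem:pro-sheaves}, whereas in characteristic zero one must ensure that Goodwillie's comparison map and the HKR isomorphism are sufficiently natural and sheaf-compatible to give an isomorphism on the nose for each individual $m$. In practice, this is standard, and can be handled either by checking Zariski-local models or by appealing to descent properties of relative cyclic homology for nilpotent thickenings of regular rings.
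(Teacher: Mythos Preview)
Your positive-characteristic argument is exactly the paper's: assemble the square with $\{\W_m\Omega^{n-1}_R\}_m$ and $\{H^0(X,\W_m\Omega^{n-1}_X)\}_m$ on the left and the two relative $K$-theory pro-groups on the right, invoke \lemref{lem:Hesselholt-T} for the horizontal arrows and \lemref{lem:pro-sheaves} to pass to $H^0$, and use that de Rham--Witt is a Zariski sheaf for the left vertical arrow. The paper adds one small remark you omit: Illusie's sheaf statement \cite[Proposition~I.1.13.1]{Illusie} is for the $p$-typical complex, so one needs the finite $p$-typical decomposition (compatible with restriction) to conclude the same for $\W_m\Omega^{n-1}_X$.

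In characteristic zero your route diverges from the paper's. The paper simply cites Hesselholt's explicit computation \cite[Theorem~10]{Hesselholt-Hand}, which gives $\wt{K}_n(A_m)\cong\bigoplus_{i\ge 1}(\Omega^{n+1-2i}_A)^m$ for any regular $\Q$-algebra $A$; this exhibits $\wt{\sK}_{n,m,X}$ as a quasi-coherent sheaf, so $H^0$ on an affine scheme recovers the global value immediately. Your Goodwillie/HKR approach is morally the same computation unwound (Hesselholt's formula is derived by these methods), and it is correct, but it is more work to make precise: you must check naturality of the Goodwillie isomorphism and the HKR filtration under Zariski localization, whereas the paper's citation delivers a closed-form answer in terms of K\"ahler differentials whose sheaf property is evident. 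Also note a small indexing slip: Goodwillie's isomorphism is $K_n(R,I)\cong HN_n(R,I)$ for nilpotent $I$ in a $\Q$-algebra, not $HN_{n-1}$.
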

\begin{proof}
Assume first that ${\rm char}(k) = 0$.
In this case, it follows from \cite[Theorem~10]{Hesselholt-Hand}
that $\wt{K}_n(A_m) \cong {\underset{i \ge 1}\oplus} 
(\Omega^{n+1-2i}_A)^m$ for any regular ring $A$ containing $k$.
In particular, $\wt{\sK}_{n, m, X}$ is a quasi-coherent sheaf on $X$ defined 
by the $R$-module $\wt{K}_n(R_m)$.
This immediately implies the desired result.

Suppose now that ${\rm char}(k) > 0$ and consider the
commutative diagram of pro-abelian groups
\begin{equation}\label{eqn:Hesselholt-app-0}
\xymatrix@C.8pc{
\left\{\W_m\Omega^{n-1}_R\right\}_m \ar[d] \ar[r] &
\left\{\wt{K}_n(R_m)\right\}_m \ar[d] \\
\left\{H^0(X, \W_m\Omega^{n-1}_X)\right\}_m \ar[r] &
\left\{H^0(X, \wt{\sK}_{n,m, X})\right\}_m.}
\end{equation}

The left vertical arrow is an isomorphism by 
\cite[Proposition~I.1.13.1]{Illusie} and the usual
$p$-typical decomposition argument. The top horizontal arrow
is an isomorphism by \lemref{lem:Hesselholt-T}. The bottom horizontal
arrow is an isomorphism by Lemmas~\ref{lem:pro-sheaves} and
~\ref{lem:Hesselholt-T}. We conclude that the right vertical arrow 
is an isomorphism too.
\end{proof}

Since the Quillen $K$-theory sheaf on the spectrum of a regular
semi-local ring containing a field is acyclic by Quillen's
Gersten resolution, \lemref{lem:Hesselholt-app} implies the following.

\begin{cor}\label{cor:Hesselholt-app*}
Let $R$ be a regular semi-local ring containing a field $k$. Then
the map ${K}_n(R_m) \to H^0(X, {\sK}_{n, m, X})$ is an isomorphism if
${\rm char}(k) = 0$. The map of
pro-abelian groups
$\left\{{K}_n(R_m)\right\}_m \to \left\{H^0(X, {\sK}_{n,m, X})\right\}_m$
is an isomorphism if ${\rm char}(k) > 0$.
\end{cor}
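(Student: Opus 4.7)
The plan is to reduce the statement to \lemref{lem:Hesselholt-app} by extracting the absolute $K$-theory summand via the augmentation, and then to handle the absolute summand separately using Quillen's Gersten resolution.

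First, I would exploit the splitting $R \inj R_m \surj R$ given by the inclusion and the augmentation. This produces a split short exact sequence of Zariski sheaves on $X$
\[
0 \to \wt{\sK}_{n,m,X} \to \sK_{n,m,X} \to \sK_{n,X} \to 0,
\]
where $\sK_{n,X}$ denotes the Zariski sheaf with stalks $K_n(\sO_{X,x})$, and a compatible split decomposition $K_n(R_m) \cong \wt{K}_n(R_m) \oplus K_n(R)$. Because the sequence is split, taking $H^0$ preserves the decomposition, so we get
\[
H^0(X,\sK_{n,m,X}) \cong H^0(X,\wt{\sK}_{n,m,X}) \oplus H^0(X,\sK_{n,X}),
\]
and the canonical map $K_n(R_m)\to H^0(X,\sK_{n,m,X})$ respects these decompositions.

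Second, for the reduced summand, I would directly invoke \lemref{lem:Hesselholt-app}: it yields a level-wise isomorphism $\wt{K}_n(R_m) \xrightarrow{\cong} H^0(X,\wt{\sK}_{n,m,X})$ when $\mathrm{char}(k)=0$ (where $\wt{\sK}_{n,m,X}$ is quasi-coherent), and an isomorphism of pro-abelian groups when $\mathrm{char}(k)>0$.

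Third, for the absolute summand, I would invoke Quillen's Gersten resolution for the regular ring $R$: it provides a flasque resolution of $\sK_{n,X}$ by the usual complex of Zariski skyscrapers supported at the points of $X$ of increasing codimension, whose global sections are exact (by Quillen's theorem) and compute $K_n(R)$. Equivalently, $\sK_{n,X}$ is acyclic on the semi-local regular scheme $X$ and its global sections equal $K_n(R)$, giving $K_n(R) \xrightarrow{\cong} H^0(X,\sK_{n,X})$ level-wise.

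Combining these three ingredients via the split decomposition yields the corollary in both characteristic cases. I do not foresee any real obstacle here; the argument is essentially a bookkeeping exercise once the splitting is in place. The only mild point to keep track of is that the statement in positive characteristic is only a pro-isomorphism in the reduced direction, which therefore controls the overall level of isomorphism after combining with the (level-wise) absolute summand.
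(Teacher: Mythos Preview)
Your proposal is correct and follows essentially the same route as the paper: split off the reduced summand via the augmentation, apply \lemref{lem:Hesselholt-app} to it, and handle the absolute summand $K_n(R)\to H^0(X,\sK_{n,X})$ using Quillen's Gersten resolution on the regular semi-local $X$. The paper compresses this into a single sentence, but the content is identical.
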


Recall that unless $R$ is local, it is not known if there exists
a canonical map from the Milnor $K$-theory defined by Gabber and Kerz
to the Quillen $K$-theory. We can however now show using the
previous results that such a map
exists for the truncated polynomials in the pro-setting.

\begin{cor}\label{cor:Milnor-Quillen-map}
Let $R$ be a regular semi-local ring containing a field $k$ and let
$n \ge 0$ be an integer. Then there is a map of
pro-abelian groups
$\left\{\wh{K}^M_n(R_m)\right\}_m \to \left\{{K}_n(R_m)\right\}_m$
which is natural in $R$. The same holds for the relative $K$-groups.
\end{cor}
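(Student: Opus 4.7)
The plan is to assemble the map out of three ingredients, each already available: the stalk-wise map $\psi_R$ from \eqref{eqn:Relation}, the sheaf-theoretic identification of $\wh{K}^M_n(R_m)$ via the improved Milnor $K$-theory sheaf, and the (pro-)isomorphism of \corref{cor:Hesselholt-app*}. Throughout, write $X=\Spec(R)$ and $\pi\colon X_m \to X$ for the natural projection.

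First, I would construct a map of Zariski sheaves $\wh{\sK}^M_{n,m,X} \to \sK_{n,m,X}$ on $X$. At a stalk $x \in X$, the ring $\sO_{X,x}$ is local and containing a field, so \eqref{eqn:Relation} provides a natural map $\psi_{\sO_{X,x}[t]/(t^{m+1})} \colon \wh{K}^M_n(\sO_{X,x}[t]/(t^{m+1})) \to K_n(\sO_{X,x}[t]/(t^{m+1}))$. Naturality of $\psi$ with respect to localization shows these assemble into a morphism of sheaves; taking global sections gives a map
\[
\wh{K}^M_n(R_m) \;=\; H^0(X_m,\wh{\sK}^M_{n,X_m}) \;=\; H^0(X,\wh{\sK}^M_{n,m,X}) \;\longrightarrow\; H^0(X,\sK_{n,m,X}),
\]
where the first equality uses $\wh{\sK}^M_{n,m,X} = \pi_*\wh{\sK}^M_{n,X_m}$ (as in the proof of \corref{cor:Milnor-truncated}) and the definition $\wh{K}^M_n(A)=H^0(\Spec A,\wh{\sK}^M_{n,\Spec A})$.

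Second, \corref{cor:Hesselholt-app*} gives an isomorphism $K_n(R_m) \xrightarrow{\cong} H^0(X,\sK_{n,m,X})$ in characteristic zero, and a pro-isomorphism $\{K_n(R_m)\}_m \xrightarrow{\cong} \{H^0(X,\sK_{n,m,X})\}_m$ in positive characteristic. In the pro-abelian category we may invert this pro-isomorphism and compose with the map constructed above to obtain
\[
\{\wh{K}^M_n(R_m)\}_m \;\longrightarrow\; \{H^0(X,\sK_{n,m,X})\}_m \;\xrightarrow{\;\cong\;}\; \{K_n(R_m)\}_m,
\]
which is the desired $\psi_R$. Naturality in $R$ is immediate since every ingredient, $\psi$ on local rings, the sheaf identifications, and the Gersten-type isomorphism of \corref{cor:Hesselholt-app*}, is natural in $R$.

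For the relative version, both $\wh{K}^M_n(R_m,(t))$ and $K_n(R_m,(t))$ are by definition the kernels of the augmentations induced by the split surjection $R_m \to R$. The augmentation admits the canonical section $R \to R_m$, and the construction above is compatible with these sections in both Milnor and Quillen theories. Hence the pro-map restricts to the kernels, yielding the analogous map $\{\wh{K}^M_n(R_m,(t))\}_m \to \{K_n(R_m,(t))\}_m$, again natural in $R$. The only nontrivial step, and the one that genuinely forces us into the pro-category when $\operatorname{char}(k)>0$, is the passage from $H^0(X,\sK_{n,m,X})$ back to $K_n(R_m)$; at a fixed level $m$ this need not be an isomorphism, which is exactly why the statement is phrased for pro-abelian groups rather than level-wise.
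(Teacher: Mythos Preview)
Your proof is correct and follows essentially the same route as the paper: build the map of sheaves $\wh{\sK}^M_{n,m,X}\to\sK_{n,m,X}$ via the Kerz map on local rings, identify $\wh{K}^M_n(R_m)$ with $H^0(X,\wh{\sK}^M_{n,m,X})$ by definition, and then invoke \corref{cor:Hesselholt-app*} to invert the comparison $\{K_n(R_m)\}_m\to\{H^0(X,\sK_{n,m,X})\}_m$ in the pro-category. Your discussion of the relative case and of why the pro-setting is needed in positive characteristic is more explicit than the paper's, but the argument is the same.
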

\begin{proof}
We let $X = \Spec(R)$. 
At any rate, we have a natural map of sheaves
$\wh{\sK}^M_{n,m, X} \to {\sK}_{n,m, X}$ by the main result of
\cite{Kerz10}. This gives rise to a 
commutative diagram
\begin{equation}\label{eqn:Milnor-Quillen-map-0}
\xymatrix@C.8pc{
\left\{\wh{K}^M_n(R_m)\right\}_m \ar@{.>}[r] \ar[d]_{\cong} &
\left\{{K}_n(R_m)\right\}_m \ar[d]^{\cong} \\
\left\{H^0(X, \wh{\sK}^M_{n,m, X})\right\}_m
\ar[r] & \left\{H^0(X, {\sK}_{n,m, X})\right\}_m.}
\end{equation}

The left vertical arrow is a level-wise isomorphism by definition
and the right vertical arrow is an isomorphism by
\corref{cor:Hesselholt-app*}. The corollary follows.
\end{proof}

\subsection{Identification of relative Milnor and Quillen 
$K$-theory}\label{sec:Inj*}
We shall now show that the canonical map from relative Milnor to
Quillen $K$-theory that we constructed
in \S~\ref{sec:Map} is an isomorphism. We need the following
to prove its injectivity.

\begin{lem}\label{lem:Torsion-0}
Let $X$ be a Noetherian regular scheme over a field of
characteristic $p > 0$. Let $q \ge 1$ be an integer. Then 
$\left\{_q \W_m\Omega^n_X\right\}_m = 0$ for all $n \ge 0$.
\end{lem}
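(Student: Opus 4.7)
\emph{Plan.} The approach is to pass to the $p$-typical decomposition of the big de Rham-Witt complex, reduce via induction to showing that the $p$-torsion in the $p$-typical complex is pro-trivial, and then to deduce this from the relations $p = VF = FV$ together with Illusie's structural results for smooth schemes over a perfect field.

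The big de Rham-Witt complex decomposes as
\[
\W_m\Omega^n_X \;\cong\; \prod_{\substack{1 \le l \le m\\ (l,p)=1}} W_{s(m,l)}\Omega^n_X,
\]
where $W_r$ denotes the $p$-typical de Rham-Witt complex and $s(m,l) = 1 + \lfloor \log_p(m/l) \rfloor$, compatibly with the restriction transitions in $m$. Since each $p$-typical factor is a sheaf of $W_r(\F_p) = \Z/p^r\Z$-modules, multiplication by any integer coprime to $p$ acts invertibly. Writing $q = q' p^a$ with $\gcd(q',p) = 1$, the problem reduces to $\{{}_{p^a} W_r\Omega^n_X\}_r = 0$, and combining the short exact sequence
\[
0 \to {}_{p^{a-1}} W_r\Omega^n_X \to {}_{p^a} W_r\Omega^n_X \xrightarrow{p^{a-1}} {}_{p} W_r\Omega^n_X
\]
with induction on $a$, I further reduce to the base case $a = 1$.

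For the base case, I apply N{\'e}ron-Popescu desingularization to reduce to the situation where $X$ is smooth over a perfect field of characteristic $p$; this is legitimate because $W_r\Omega^n$ and its $p$-torsion subsheaf commute with filtered colimits of rings and the bound to be established will be uniform in $X$. In this smooth setting Illusie's structural theory gives: (i) the Verschiebung $V: W_r\Omega^n_X \inj W_{r+1}\Omega^n_X$ is injective; (ii) $FV = VF = p$; and (iii) the kernel of the restriction $R: W_{r+1}\Omega^n_X \surj W_r\Omega^n_X$ is $V^r W_1\Omega^n_X + dV^r W_1\Omega^{n-1}_X$, which (on $W_{r+1}\Omega^n_X$) coincides with $\ker F$, as both can be read off from the $V$-filtration via the relations $FV^r = p V^{r-1}$ and $Fd = p\, dF$ combined with the vanishing of $p$ on $W_1\Omega^*_X = \Omega^*_X$.

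Given $y \in W_{r+1}\Omega^n_X$ with $py = 0$, items (i) and (ii) force $Fy = 0$, and (iii) then yields $R(y) = 0$. Hence the transition ${}_p W_{r+1}\Omega^n_X \to {}_p W_r\Omega^n_X$ is zero for every $r$, giving the desired pro-vanishing with the uniform bound $N(r) = 1$. The main obstacle will be justifying the finite-level equality $\ker F = \ker R$ of (iii) from Illusie's work (the inclusion $\ker R \subseteq \ker F$ is immediate from $Fd = p\, dF$ and $FV^r = p V^{r-1}$, but the reverse inclusion requires his detailed analysis of the $V$-filtration on smooth schemes over a perfect field) and in confirming that this identification is preserved under the N{\'e}ron-Popescu passage from smooth to regular at the level of the $p$-torsion sub-pro-sheaf.
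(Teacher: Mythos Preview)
Your overall strategy (pass to the $p$-typical complex, strip off the prime-to-$p$ part of $q$, induct on the $p$-adic valuation of $q$, and invoke Illusie's structure theory for the base case) is sound and close in spirit to the paper's route. However, the execution of the base case contains genuine errors that break the argument.

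First, item (i) is false for $n\ge 1$: the Verschiebung $V:W_r\Omega^n_X\to W_{r+1}\Omega^n_X$ is \emph{not} injective. Indeed the standard relation $Vd=p\,dV$ gives, for $\eta\in\Omega^{n-1}_X=W_1\Omega^{n-1}_X$, that $V(d\eta)=p\,dV\eta=d(pV\eta)=d(V(p\eta))=0$ since $p\eta=0$ in characteristic $p$. Hence $d\Omega^{n-1}_X\subset\ker V$, and your implication ``$py=0\Rightarrow VFy=0\Rightarrow Fy=0$'' fails. Second, the relation you use, $Fd=p\,dF$, is backwards: the correct identity is $dF=pFd$ (equivalently $FdV=d$). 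With the correct relation one computes $F(dV^r\omega)=FdV(V^{r-1}\omega)=d(V^{r-1}\omega)$, which is generically nonzero, so $dV^r\omega\in\ker R\setminus\ker F$ and your asserted equality $\ker F=\ker R$ is false as well. Thus neither the ``immediate'' inclusion $\ker R\subseteq\ker F$ you claim, nor the opposite inclusion via injectivity of $V$, goes through.

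By contrast, the paper first reduces to the case of a field (via the injection $\W_m\Omega^n_R\hookrightarrow\W_m\Omega^n_F$ of Lemma~\ref{lem:DWC-inj}, which also guarantees uniformity of the bound), then to the $p$-typical complex, and finally appeals directly to Illusie's theorem (\cite[Prop.~I.3.4]{Illusie}, cf.\ \cite[Lemma~2.3]{R}) that the canonical and $p$-filtrations on $W_r\Omega^n_k$ coincide; this is exactly the statement that $p^r$-torsion classes die after $r$ restrictions, which is the content of your base case but is not derivable from the relations you invoke. If you want to keep your framework, replace your items (i)--(iii) by a direct citation of that coincidence of filtrations.
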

\begin{proof}
It suffices to show that if $R$ is a regular semi-local ring 
containing a field of characteristic $p > 0$, and $m \ge 1$ is an
integer, then the map $_q \W_{m'}\Omega^n_R \to \W_{m}\Omega^n_R$ is zero
for some integer $m' \ge m$, depending only on $m$ and $n$.  

Using \lemref{lem:DWC-inj}, it suffices to prove this assertion
for fields. So we let $k$ be a field
  of characteristic $p > 0$. Write $q = p^rs$, where $p \nmid s$. It is then clear that
  $_q \W_m\Omega^n_k = \ _{p^r}\W_m\Omega^n_k$. We therefore need to show that given
  any integer $m \ge 1$, the map $_{p^r}\W_{m'}\Omega^n_k \to \W_m\Omega^n_k$ is zero for
  all $m' \gg m$.

  Using the $p$-typical decomposition of $\W_m\Omega^n_k$ and the fact that this
  decomposition is finite and is compatible with the restriction maps
  $\W_{m'}\Omega^n_k \to \W_m\Omega^n_k$,
  it suffices to prove the last assertion
for the $p$-typical de Rham-Witt forms $W_m\Omega^n_k$.
  We thus have to show that given $m \ge 1$, the map
  $_{p^r}W_{m'}\Omega^n_k \to W_m\Omega^n_k$ is zero for
  all $m' \gg m$, depending only on $m$ and $n$. 
However, this is an immediate consequence of
  a theorem of Illusie (see \cite[Proposition~I.3.4, p.~569]{Illusie}, 
see also \cite[Lemma~2.3]{R})
  that the canonical and $p$-filtrations (and also the $V$-filtration) of 
$W_r\Omega^n_k$ coincide.
\end{proof}

\begin{lem}\label{lem:Milnor-Quillen-inj}
Let $X$ be a Noetherian regular scheme over a field $k$ of
characteristic $p > 0$. Let $n \ge 0$ be an integer. Then the canonical map
$\left\{\wt{\sK}^M_{n, m, X}\right\}_m 
\to \left\{\wt{\sK}_{n, m, X}\right\}_m$ is injective.
\end{lem}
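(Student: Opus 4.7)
The plan is to apply \lemref{lem:pro-sheaves} together with \remref{remk:bound} to reduce the assertion to a stalkwise statement with uniform bounds. Specifically, setting $\sN_{m,x} := \ker(\wt{K}^M_n(R_m) \to \wt{K}_n(R_m))$ for $R = \sO_{X,x}$, it suffices to find $N(m)$ depending only on $m$ and $n$ (not on the stalk) such that the transition $\sN_{m+N(m),x} \to \sN_{m,x}$ vanishes for every $x \in X$.

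To this end, I would factor $\psi_R$ through the de Rham-Witt comparison via the diagram of pro-morphisms
\begin{equation*}
\{\W_m\Omega^{n-1}_R\}_m \xrightarrow{\theta_R \lambda_R} \{\wt{K}^M_n(R_m)\}_m \xrightarrow{\psi_R} \{\wt{K}_n(R_m)\}_m \xleftarrow[\cong]{\epsilon} \{\W_m\Omega^{n-1}_R\}_m,
\end{equation*}
where the leftmost arrow is the pro-isomorphism of \corref{cor:Milnor-truncated} and $\epsilon$ is the Hesselholt-Madsen pro-isomorphism of \lemref{lem:Hesselholt-T}. Both are natural in $R$ with kernels and cokernels bounded by integers depending only on $m$ and $n$. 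Using Hesselholt's explicit formulas from \cite{Hesselholt-Ac} together with the multiplicativity of the cyclotomic trace, I would verify that the composite $\epsilon^{-1} \circ \psi_R \circ \theta_R \lambda_R$ equals multiplication by some integer $c(n)$ depending only on $n$. Indeed, both $\epsilon$ and $\psi_R \circ \theta_R \lambda_R$ send a generator $w \dlog[a_1] \cdots \dlog[a_{n-1}]$ to the Milnor-type symbol $\{\gamma(w), a_1, \ldots, a_{n-1}\}$ in $\wt{K}_n(R_m)$, differing at most by the combinatorial constant $c(n)$.

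Given this comparison, the image of $\sN_{m,x}$ under the pro-iso $\theta_R \lambda_R$ lies in the $c(n)$-torsion of $\{\W_m\Omega^{n-1}_R\}_m$. By \lemref{lem:Torsion-0} applied with $q = c(n)$, this torsion pro-group vanishes with bounds depending only on $c(n)$ and $n$, not on the stalk. Hence $\{\sN_{m,x}\}_m = 0$ uniformly in $x$, which finishes the proof by \lemref{lem:pro-sheaves}. The main anticipated obstacle is the explicit comparison of $\psi_R \circ \theta_R \lambda_R$ with Hesselholt's map $\epsilon$, which requires unwinding the construction of the de Rham-Witt-to-$K$-theory map in \cite{HM} and \cite{Hesselholt-Ac} and verifying that on $\dlog$-generators it recovers Milnor symbols up to a combinatorial factor $c(n)$; this compatibility follows from the ring structure of the cyclotomic trace and the degree-one identification $\W_m(R) \cong \wt{K}_1(R_m)$ of \eqref{eqn:Witt-vec}.
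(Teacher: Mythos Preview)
Your proposal has a genuine gap at its central step. You assert that the composite $\epsilon^{-1} \circ \psi_R \circ \theta_R \circ \lambda_R$ is multiplication by a constant $c(n)$, but this comparison is not established anywhere and is itself highly nontrivial. Hesselholt's map $\epsilon$ is constructed via the cyclotomic trace $K \to TC$ and the identification of the homotopy groups of $TC$ with de Rham--Witt forms; it is not given by a formula on Milnor-type symbols, and your claim that it sends $w\dlog[a_1]\cdots\dlog[a_{n-1}]$ to $\{\gamma(w), a_1, \ldots, a_{n-1}\}$ (even up to a constant) is precisely the statement that would need proof. Your sketch also ignores the second family of generators $dw\dlog[a_1]\cdots\dlog[a_{n-2}]$ from \lemref{lem:HK-RS}, on which $\lambda_R$ involves the parameter $T$ (see \thmref{thm:RS-main}) and the comparison becomes even less transparent. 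You flag this as ``the main anticipated obstacle'' but then effectively assume it away; as written, the argument is circular, since the comparison you need is at least as deep as the lemma itself.

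The paper's proof avoids this difficulty entirely. It first reduces to the case where $R$ is a field, using \lemref{lem:DWC-inj} and \corref{cor:Milnor-truncated} to control the kernel of the base-change map $\{\wt{K}^M_n(R_m)\}_m \to \{\wt{K}^M_n(F_m)\}_m$ with bounds depending only on $n$. Finite fields are handled trivially since $\{\wt{K}^M_n(k_m)\}_m$ is then bounded by $n$. For infinite fields, the paper invokes the Nesterenko--Suslin theorem (\cite{NS}, \cite[Proposition~10(6)]{Kerz10}): the kernel of $K^M_n \to K_n$ for a local ring with infinite residue field is $(n-1)!$-torsion. This torsion bound is transported to the de Rham--Witt side via \corref{cor:Milnor-truncated} (with uniform bounds by \remref{remk:bound}), and \lemref{lem:Torsion-0} finishes. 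The ingredient you are missing is thus an \emph{a priori} torsion bound on $\ker(\psi_R)$ that does not require comparing $\psi_R$ with $\epsilon$; Nesterenko--Suslin supplies exactly this.
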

\begin{proof}
For $n \le 1$, the lemma is obvious. So we assume $n \ge 2$.
We fix an integer $m \ge 1$.
It suffices to show that if $R$ is regular local ring containing $k$
and $F^n_m$ is the kernel of the map $\wt{K}^M_n(R_m) \to \wt{K}_n(R_m)$,
then there exists an integer $m' \gg m$, depending only on $m$ and $n$,
such that the map $F^n_{m'} \to F^n_m$ is zero.

It follows from \lemref{lem:DWC-inj} and \corref{cor:Milnor-truncated} that the
kernel of the 
  map $\left\{\wt{K}^M_n(R_m)\right\}_m \to \left\{\wt{K}^M_n(F_m)\right\}_m$ 
is bounded by $1$.
  Using the commutativity of this map with the similar map the 
between Quillen $K$-groups,
  it suffices therefore to prove our assertion for a field $k$ 
with ${\rm char}(k) > 0$. 

If $k$ is finite, then $\left\{\wt{K}^M_n(k_m)\right\}_m$ is bounded by $1$,
again
  by \corref{cor:Milnor-truncated}. 
We can therefore assume that $k$ is infinite.
  In this case, we know that $F^n_m$ is a torsion group
  of exponent $(n-1)!$ (see \cite{NS} or \cite[Proposition~10 (6)]{Kerz10}).
On the other hand, it follows from \corref{cor:Milnor-truncated} that
the map $\left\{_q\W_m\Omega^{n-1}_k\right\}_m \to 
\left\{_q\wt{K}^M_n(k_m)\right\}_m$ has kernel and cokernel bounded by
$1$ for all $q$. It suffices therefore to show that for every pair of
integers $m, q \ge 1$, there exists 
an integer $m' \gg m$, depending only on $m$ and $n$,
such that the map $_q\W_{m'}\Omega^{n-1}_k \to _q\W_{m}\Omega^{n-1}_k$ is zero.
But this is shown in the proof of \lemref{lem:Torsion-0}.
\end{proof}

The above result implies the following (see the proof of
\propref{prop:M-Q-pro}).

\begin{cor}\label{cor:M-Q-pro-inj}
Let $R$ be a regular semi-local ring containing a field of 
characteristic $p > 0$.
 Then the canonical map 
$\left\{\wt{K}^M_n(R_m)\right\}_m \to \left\{\wt{K}_n(R_m)\right\}_m$ 
(see \corref{cor:Milnor-Quillen-map})  is injective for all $n \ge 0$.
\end{cor}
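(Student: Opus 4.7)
The plan is to reduce the claim to the sheaf-level injectivity of \lemref{lem:Milnor-Quillen-inj} by passing to global sections on $X := \Spec(R)$, exactly as in the proof of \lemref{lem:M-Q-pro}. First I would consider the strict morphism of sheaves of pro-abelian groups
\[
\psi_X \colon \left\{\wt{\sK}^M_{n,m,X}\right\}_m \to \left\{\wt{\sK}_{n,m,X}\right\}_m
\]
on $X$. By \lemref{lem:Milnor-Quillen-inj} this morphism is injective, i.e.\ its kernel pro-sheaf satisfies the bounded-stalks hypothesis of \lemref{lem:pro-sheaves}. Applying the second part of \lemref{lem:pro-sheaves}, the induced map
\[
\left\{H^0(X, \wt{\sK}^M_{n,m,X})\right\}_m \to \left\{H^0(X, \wt{\sK}_{n,m,X})\right\}_m
\]
of pro-abelian groups is injective as well.

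Next I would identify each side of the latter map with the corresponding group appearing in the statement. For the right-hand side, \corref{cor:Hesselholt-app*} (applied in the positive characteristic case, since $p > 2$) gives a pro-isomorphism $\left\{\wt{K}_n(R_m)\right\}_m \xrightarrow{\cong} \left\{H^0(X, \wt{\sK}_{n,m,X})\right\}_m$. For the left-hand side, the computation carried out in the proof of \corref{cor:Milnor-truncated} shows that $\wt{K}^M_n(R_m) \cong H^0(X, \wt{\sK}^M_{n,m,X})$ (this uses only that $\wt{\sK}^M_{n,m,X}$ is the direct image of $\wh{\sK}^M_{n,X_m}$ under the projection $X_m \to X$ and the definition of improved Milnor $K$-theory in \S\ref{sec:Improved-K}); in particular one has a level-wise isomorphism $\left\{\wt{K}^M_n(R_m)\right\}_m \xrightarrow{\cong} \left\{H^0(X, \wt{\sK}^M_{n,m,X})\right\}_m$.

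Finally, I would assemble these identifications into the commutative square
\[
\xymatrix@C.8pc{
\left\{\wt{K}^M_n(R_m)\right\}_m \ar[r] \ar[d]_{\cong} & \left\{\wt{K}_n(R_m)\right\}_m \ar[d]^{\cong} \\
\left\{H^0(X, \wt{\sK}^M_{n,m,X})\right\}_m \ar@{^{(}->}[r] & \left\{H^0(X, \wt{\sK}_{n,m,X})\right\}_m,
}
\]
whose bottom arrow was just shown to be injective in the pro-category, and whose vertical arrows are isomorphisms of pro-abelian groups. Since an isomorphism precomposed and postcomposed with an injection is an injection, the top arrow is injective, which is the assertion of the corollary. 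The only nontrivial ingredient is the sheaf-level statement \lemref{lem:Milnor-Quillen-inj}, which has already been proven; the rest is a formal passage from sheaves to sections using \lemref{lem:pro-sheaves}, so I do not expect any serious obstacle in this step.
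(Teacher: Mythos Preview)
Your proposal is correct and follows exactly the route indicated by the paper, which simply says ``as we showed in the proof of \lemref{lem:M-Q-pro}'': you unpack precisely that reduction, using \lemref{lem:pro-sheaves}, \lemref{lem:Hesselholt-app} (you cite \corref{cor:Hesselholt-app*}, but the relative version in \lemref{lem:Hesselholt-app} is the one you actually need), and the identification $\wt{K}^M_n(R_m)\cong H^0(X,\wt{\sK}^M_{n,m,X})$ from the proof of \corref{cor:Milnor-truncated}, then feed in the sheaf-level injectivity of \lemref{lem:Milnor-Quillen-inj}. One tiny remark: the last clause of \lemref{lem:pro-sheaves} is stated for isomorphisms, so what you really use is that the kernel pro-sheaf is uniformly bounded (this is exactly what the proof of \lemref{lem:Milnor-Quillen-inj} establishes), hence its level-wise $H^0$ is bounded too, which gives injectivity on global sections.
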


\vskip.3cm

Recall that a ring $R$ containing a field of characteristic $p > 0$ is called $F$-finite, if
it is a finitely generated algebra (equivalently, a finitely generated module) over $R^p$. One knows that
$R$ is $F$-finite if it is essentially of finite type over a perfect field. This is also true
for the Henselization or completion of $R$ along any ideal.
In particular, any field which is a finitely generated over a perfect field is $F$-finite. We say that a scheme is locally $F$-finite if all its local rings are so.

\vskip .3cm

The main result of this section that we shall use later is the following.

\begin{prop}\label{prop:M-Q-pro}
  Let $R$ be an $F$-finite regular semi-local ring containing a field of 
characteristic $p > 0$.
  Then the canonical map $\left\{\wt{K}^M_n(R_m)\right\}_m \to 
\left\{\wt{K}_n(R_m)\right\}_m$ of \corref{cor:Milnor-Quillen-map} is an
  isomorphism for all $n \ge 0$.
\end{prop}
\begin{proof}
  If $R$ is local, this follows from \thmref{thm:Milnor-final} and
  \cite[Theorem~6.1]{Morrow} (which implicitly uses \thmref{thm:Milnor-final}
  or Remark~\ref{remk:BDI}).
  To prove the general case, 
we write $X = \Spec(R)$ and $X_m = \Spec(R_m)$ as before. We then have the 
strict map of sheaves of pro-abelian groups
$\psi_X \colon
\left\{\wt{\sK}^M_{n, m, X}\right\}_m \to \left\{\wt{\sK}_{n, m, X}\right\}_m$
on $X$. 
Using Lemmas~\ref{lem:pro-sheaves}, ~\ref{lem:Hesselholt-app} and 
(the proof of) \corref{cor:Milnor-truncated}, it suffices to show that
$\psi_X$ is an isomorphism. Note that $X$ is locally $F$-finite.
In view of \lemref{lem:Milnor-Quillen-inj},
we only have to show that $\psi_X$ is surjective.

We consider the commutative diagram of exact sequences of sheaves
\begin{equation}\label{eqn:MQ-pro-0}
    \xymatrix@C.8pc{
      0 \ar[r] & \wt{\sK}^M_{n, m, X} \ar[r] \ar[d]_{\psi_X} & \wh{\sK}^M_{n, m, X} 
\ar[r] \ar[d] & \wh{\sK}^M_{n,X} \ar[r] \ar[d]  & 0 \\
      0 \ar[r] & \wt{\sK}_{n, m, X} \ar[r] & {\sK}_{n, m, X} \ar[r] &
{\sK}_{n,X} \ar[r]  & 0.}
    \end{equation}

It follows by \cite[Theorem~8.1]{GL} and the exactness 
of Gersten complexes for $\wh{\sK}^M_{n,X} $ and
$\wh{\sK}_{n,X} $ (see \cite[Proposition~10(8)]{Kerz10}) that these 
sheaves have no
$p$-torsion. In particular, the two rows of ~\eqref{eqn:MQ-pro-0} remain exact 
with ${\Z}/{p^r}$-coefficients.

We first show that $\psi_X$ is surjective with ${\Z}/{p^r}$-coefficients.
Since the right vertical arrow in  ~\eqref{eqn:MQ-pro-0} is
an isomorphism with ${\Z}/{p^r}$-coefficients by \cite[Theorem~8.1]{GL},
we need to show that the middle vertical arrow is surjective 
with ${\Z}/{p^r}$-coefficients. But this follows from 
\cite[Corollary~5.5]{Morrow} (which uses the $F$-finiteness assumption).
Morrow states this corollary in the case when $X$ is the spectrum of
a local ring. However, as he explains in \cite[Remark~5.8]{Morrow}, 
the result holds at the level of sheaves too and the proof is
obtained by repeating the proof of the local ring case verbatim
and observing that the bounds in the pro-systems are controlled
while going from one point to another point of the underlying scheme. 

Indeed, Morrow shows that there are maps
\[
\left\{\wh{\sK}^M_{n, m, X}/{p^r}\right\}_m \to
\left\{{\sK}_{n, m, X}/{p^r}\right\}_m  \xrightarrow{\dlog}
\left\{W_r\Omega^n_{(X_m, X), {\rm log}}\right\}_m
\]
whose composition is level-wise surjective, where the last term
is the relative $p$-typical logarithmic de Rham-Witt complex \cite{Illusie}. 
He then shows that the second map is an isomorphism. The main argument
(one which involves the usages of pro-systems) in the proof of this 
isomorphism is the pro-HKR theorem of Dundas-Morrow \cite{DM}.
And one checks that this pro-HKR theorem holds at the level
of sheaves (see the footnote below \S~5.3 in \cite{Morrow}).  

To prove the surjectivity of $\psi_X$, we now claim
that $p^r\wt{\sK}_{n, m, X} = 0$ for some $r$, depending only on 
$m$ and $n$. 
For this, we first note that $\W_{m+n}\Omega^{n-1}_X$ is a sheaf of 
$\W_{m+n}(\F_p)$-modules. Hence, $p^{m+n}\W_{m+n}\Omega^{n-1}_X = 0$.
We next use the Hesselholt-Madsen exact sequence \cite{HM}:
\[
{\underset{i \ge 0}\oplus} \W_{(m+1)(i+1)}\Omega^{n-1-2i}_X 
\xrightarrow{\epsilon} \wt{\sK}_{n, m, X} \xrightarrow{\partial}
{\underset{i \ge 0}\oplus} \W_{i+1}\Omega^{n-2-2i}_X.
\]
We have seen above that the two end terms are annihilated by
some power of $p$ (depending only on $m$ and $n$). 
It follows that the the middle term has the same
property. This proves the claim.

We now let $\sE^n_m = {\rm Coker}(\wt{\sK}^M_{n, m, X} \to \wt{\sK}_{n, m, X})$. 
We have shown previously 
that $\left\{{\sE^n_m}/{p^r}\right\}_m = 0$ for every $r \ge 1$. 
On the other hand, for a fixed integer $m \ge 0$,  the claim
implies that $p^r\sE^n_m = 0$ for some $r \gg 0$.
It follows that the map $\sE^n_{m'} \to \sE^n_m$ is zero for all $m ' \gg m$.
In particular, $\left\{\sE^n_m\right\}_m = 0$.
This shows that $\psi_X$ is surjective and finishes the 
proof of the proposition.
\end{proof}

\subsection{The cycle class map to Milnor $K$-theory}\label{sec:CCM-MK}
 It was shown by R\"ulling \cite{R} (for fields) and Krishna-Park \cite{KP-2} (for
semi-local rings) that the additive
higher Chow groups $\TCH^n(R,n;m)$ for $m, n \ge 1$ together form the universal restricted
Witt-complex (see \cite[\S~1]{R} for definition) over $R$. In particular, there is an
isomorphism of restricted Witt-complexes
\begin{equation}\label{eqn:Witt-C}
  \tau_R \colon \W_m \Omega^{n-1}_R \xrightarrow{\cong} \TCH^n(R,n;m)
\end{equation}
for every $m,n \ge 1$. This map is given by
\begin{equation}\label{eqn:Witt-C-0}
  \tau_R(w \dlog[a_1] \cdots \dlog[a_{n-1}]) = V(\gamma(w), y_1 - a_1, \ldots , y_{n-1} - a_{n-1}),
\end{equation}
where $a_i \in R^{\times}$, $\gamma(w) \in R[T]$ is the polynomial defined in \eqref{eqn:Witt-vec} and
$V(I)$ denotes the closed subscheme of $\Spec(R) \times \square^{n-1} \cong
\Spec(R[T, y_1, \ldots , y_{n-1}])$, defined by the ideal $I$.
We shall call $\tau_R$ by the name `the de Rham-Witt-Chow isomorphism'.
This is the additive analog of the Milnor-Chow isomorphism of \cite{EM}, \cite{NS} and
\cite{Totaro}.

Using the Chow-Witt isomorphism and \corref{cor:Milnor-truncated}, we define our cycle class map from
the additive higher Chow group of relative 0-cycles to relative Milnor $K$-theory as follows.

\begin{defn}\label{defn:CCM-Milnor-*}
  Let $R$ be as above and $n \ge 1$ an integer. 
We define the cycle class map to Milnor $K$-theory
  to be the composite map of pro-abelian groups
  \begin{equation}\label{eqn:CM-Milnor-*-0}
    cyc^M_R = \theta_R \circ \lambda_R \circ \tau^{-1}_R \colon
    \left\{\TCH^n(R,n;m)\right\}_m \to \left\{\wt{K}^M_n(R_m)\right\}_m.
\end{equation}
\end{defn}
  
It follows from \cite[Theorem~1.1]{KP-2} that $\tau_R$ is functorial with respect to any
$k$-algebra homomorphism
between regular semi-local rings $R \to R'$ essentially of finite type over $k$.
The maps $\lambda_R$ and $\theta_R$
are clearly functorial in $R$ by their construction. It follows that $cyc^M_R$ is functorial in $R$. 
Notice also that $cyc^M_R$ is an isomorphism.

\subsection{Proofs of Theorems~\ref{thm:Main-0}((1)-(3)) and ~\ref{thm:Main-1}}
\label{sec:Proofs**}
We let $k$ and $R$ be as in the these theorems. 
%
We define the cycle class map as the following composition 
\begin{equation}\label{eqn:CM-Quillen-*-0}
    cyc'_R = \psi_R \circ cyc^M_R \colon
    \left\{\TCH^n(R,n;m)\right\}_m \to \left\{\wt{K}_n(R_m)\right\}_m,
\end{equation}
where the map $\psi_R$ is as in \corref{cor:Milnor-Quillen-map}.
The proofs of \thmref{thm:Main-1} and parts (1) to (3) of \thmref{thm:Main-0}
follow immediately.
$\hfill\square$

\section{The cycle class map for semi-local rings}\label{sec:CCM-R*}
The goal of the remaining two sections is to define $cyc_R$ at the level of additive higher
Chow groups and prove the final part of \thmref{thm:Main-0}.
In this section, we shall define $cyc_R$ which generalizes the construction of
~\eqref{eqn:CCM-pro} from fields to regular semi-local rings over a field.
In the next section, we shall show the agreement between $cyc_R$ and $cyc'_R$ under our
assumptions.

We fix a field $k$ of characteristic $p > 0$ and let $R$ be a regular semi-local ring
which is essentially of finite type over $k$. We let $F$ denote the total ring of quotients of $R$.
Let $\Sigma$ denote the set of maximal ideals of $R$.
Recall our function $\lambda \colon \Z_+ \to \Z_+$ given by $\lambda(m) = n(m+1) - 1$ in ~\eqref{eqn:CCM-pro}.

\subsection{A pro-Gersten for $K$-theory}\label{Sec:DWC-K}
  In \cite[Theorem~10.2]{GK} (see its proof),
  it was shown that if $R$ contains $\Q$,
  the base change map $\wt{K}_n(R_m) \to \wt{K}_n(F_m)$ is injective for all $m, n \ge 0$,
 where $F$ is the total ring of quotients of $R$. However, we do not know if this inclusion holds in
  positive characteristic. We shall use a result of Hesselholt-Madsen \cite{HM}
 to prove the following partial result which will imply the validity of this inclusion in the pro-setting. We shall need this result in order to
construct our cycle class map.

 \begin{lem}\label{lem:K-inj}
   Let $n \ge 0$ be any integer and let
$e \ge 1$ be an integer not divisible by $p$.
   Then the base change map $\eta_{R,e} \colon 
\wt{K}_n(R_{e-1}) \to \wt{K}_n(F_{e-1})$ is injective. 
In particular, for every $m \ge 1$, the canonical map
${\rm Ker}(\wt{K}_n(R_{mp}) \to \wt{K}_n(F_{mp}))
\to \wt{K}_n(R_{m})$ is zero.
 \end{lem}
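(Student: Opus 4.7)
The plan is to deduce the result from the Hesselholt--Madsen computation of relative $K$-theory of truncated polynomial algebras, combined with the injectivity of de Rham--Witt under localization (Lemma~\ref{lem:DWC-inj}).

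First, the second assertion follows formally from the first. Taking $e = mp+1$, which is coprime to $p$ since $p \mid mp$, the first part gives injectivity of $\wt{K}_n(R_{mp}) \to \wt{K}_n(F_{mp})$; thus the kernel in question is zero and \emph{a fortiori} maps to zero in $\wt{K}_n(R_m)$.

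For the first assertion, I would invoke the Hesselholt--Madsen exact sequence already used in the proof of Lemma~\ref{lem:M-Q-pro}: for any regular $\F_p$-algebra $A$, functorially in $A$,
\[
\bigoplus_{i \ge 0} \W_{e(i+1)}\Omega^{n-1-2i}_A \xrightarrow{\epsilon_A} \wt{K}_n(A_{e-1}) \xrightarrow{\partial_A} \bigoplus_{i \ge 0} \W_{i+1}\Omega^{n-1-2i}_A.
\]
Comparing this sequence for $A = R$ with that for $A = F$ yields a commutative ladder. If $\alpha \in \wt{K}_n(R_{e-1})$ maps to zero in $\wt{K}_n(F_{e-1})$, then $\partial_R(\alpha)$ maps to zero in $\bigoplus_i \W_{i+1}\Omega^{n-1-2i}_F$; Lemma~\ref{lem:DWC-inj} then forces $\partial_R(\alpha) = 0$, so $\alpha$ lies in the image of $\epsilon_R$, say $\alpha = \epsilon_R(\gamma)$.

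The hypothesis $\gcd(e,p) = 1$ then enters to kill $\alpha$. Under this coprimality, Hesselholt--Madsen's structure theorem describes $\mathrm{Ker}(\epsilon_A)$ as the subgroup generated by natural (in $A$) relations coming from the de Rham--Witt operators $V^e$, $F^e$, and $d$. All of these commute with the flat base change $R \to F$, and hence, combined with Lemma~\ref{lem:DWC-inj}, ensure that the induced map $\mathrm{Im}(\epsilon_R) \to \mathrm{Im}(\epsilon_F)$ is injective; a diagram chase then gives $\alpha = 0$. The main obstacle will be the last step: precisely identifying $\mathrm{Ker}(\epsilon_A)$ and verifying its base-change compatibility. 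This is where the coprimality hypothesis is indispensable, since when $p \mid e$ the $p$-typical $V$-filtration on de Rham--Witt introduces descent obstructions not visible through Lemma~\ref{lem:DWC-inj} alone, and the level-wise injectivity can genuinely fail.
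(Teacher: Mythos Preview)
Your overall strategy coincides with the paper's: reduce to the first assertion, invoke the Hesselholt--Madsen exact sequence, compare it for $R$ and $F$ via Lemma~\ref{lem:DWC-inj}, and then control the image of $\epsilon$. The gap you yourself flag at the end is the only real one, and the paper resolves it cleanly.

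The point you are missing is that the Hesselholt--Madsen sequence extends one step further to the left: there is a natural exact sequence
\[
\bigoplus_{i \ge 0} \W_{i+1}\Omega^{n-2i}_A \xrightarrow{V_e} \bigoplus_{i \ge 0} \W_{e(i+1)}\Omega^{n-2i}_A \xrightarrow{\epsilon} \wt{K}_{n+1}(A_{e-1}) \xrightarrow{\partial} \bigoplus_{i \ge 0} \W_{i+1}\Omega^{n-1-2i}_A,
\]
so $\mathrm{Ker}(\epsilon_A) = \mathrm{Im}(V_e)$. There is no need for a vague appeal to ``relations coming from $V^e$, $F^e$, and $d$''; the kernel is precisely the image of the Verschiebung $V_e$. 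Your diagram chase then reduces injectivity of $\eta_{R,e}$ to the statement that the square
\[
\xymatrix@C1pc{
\W_m\Omega^n_R \ar@{^{(}->}[d] \ar[r]^-{V_e} & \W_{me}\Omega^n_R \ar@{^{(}->}[d] \\
\W_m\Omega^n_F \ar[r]^-{V_e} & \W_{me}\Omega^n_F}
\]
is Cartesian for all $m,n$. This is where the coprimality hypothesis enters, via a single identity: the Frobenius $F_e$ satisfies $F_e \circ V_e = e$ on $\W_m\Omega^n$. Since $p \nmid e$, the integer $e$ is a unit in $\W_m(R)$, so if $\alpha \in \W_{me}\Omega^n_R$ and $\beta \in \W_m\Omega^n_F$ satisfy $\alpha = V_e(\beta)$ in $\W_{me}\Omega^n_F$, then $\beta = e^{-1}F_e(\alpha)$ already lies in $\W_m\Omega^n_R$. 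That is the entire argument; no further structure theory is needed.
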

 \begin{proof}
   We only have to show that $\eta_{R,e}$ is injective as the second assertion of the lemma
   immediately follows from this.
We can assume that $R$ is an integral domain so that $F$ is a
field.
We now fix an integer $n \ge 0$. It was shown by Hesselholt-Madsen \cite{HM} that there is a
   natural exact sequence
   \begin{equation}\label{eqn:K-inj-0}
     {\underset{i \ge 0}\oplus} \W_{i+1}\Omega^{n-2i}_R \xrightarrow{V_e}
     {\underset{i \ge 0}\oplus} \W_{e(i+1)}\Omega^{n-2i}_R \xrightarrow{\epsilon}
     \wt{K}_{n+1}(R_{e-1}) \xrightarrow{\partial} {\underset{i \ge 0}\oplus} \W_{i+1}\Omega^{n-1-2i}_R,
     \end{equation}
     where $V_e$ is the Verschiebung map.
 By comparing this exact sequence with the analogous exact sequence for $F$ and using
 \lemref{lem:DWC-inj}, the proof of the injectivity of $\eta_{R,e}$ reduces to showing
 that for every $n \ge 0$ and $m \ge 1$, the square
 \begin{equation}\label{eqn:K-inj-1} 
   \xymatrix@C.8pc{
     \W_m\Omega^n_R \ar@{^{(}->}[d] \ar[r]^-{V_e} & \W_{me}\Omega^n_R \ar@{^{(}->}[d] \\
     \W_m\Omega^n_F \ar[r]^-{V_e} & \W_{me}\Omega^n_F}
 \end{equation}
 is Cartesian.

 To show this, let $\alpha \in \W_{me}\Omega^n_R$ and
 $\beta \in \W_m\Omega^n_F$ be such that $\alpha = V_e(\beta) \in  \W_{me}\Omega^n_F$. We consider the commutative diagram
 \[
 \xymatrix@C.8pc{
   \W_m\Omega^n_R \ar@{^{(}->}[d] \ar[r]^-{V_e} & \W_{me}\Omega^n_R \ar@{^{(}->}[d]
   \ar[r]^-{F_e} & \W_m\Omega^n_R \ar@{^{(}->}[d] \\
   \W_m\Omega^n_F \ar[r]^-{V_e} & \W_{me}\Omega^n_F \ar[r]^-{F_e} &
 \W_m\Omega^n_F,}
\]
where $F_e$ is the Frobenius map.
Since $F_e \circ V_e(\beta) = e \beta$ (see \cite[Definition~1.4]{R}), we get
$F_e(\alpha) = e\beta$. Since $p \nmid e$, we have that $e \in (\W_m(R))^{\times}$.
We thus get $\beta = e^{-1} F_e(\alpha) \in \W_m\Omega^n_R$.
Since the all vertical arrows in the above diagram are inclusions, it follows that
$\beta \in \W_m\Omega^n_R$ and $V_e(\beta) = \alpha$. This finishes the proof.
\end{proof}

\subsection{The sfs cycles}\label{sec:Rel-cyc}
We need the notion of sfs-cycles in order to generalize the cycle
class map of ~\eqref{eqn:CCM-pro} from fields to semi-local rings. Let $m \ge 0$ and $n \ge 1$ be two integers.
Recall  (see \S~\ref{sec:ACG*}) that $\TCH^n(R,n;m)$ is defined as the middle homology of the complex
$\Tz^n(R,n+1;m) \xrightarrow{\partial} \Tz^n(R,n;m)
\xrightarrow{\partial} \Tz^{n-1}(R,n;m)$.
Note that a cycle in $\Tz^n(R,n;m)$ has relative dimension zero over $R$. 
We shall say that an extension of regular semi-local rings 
$R \subset R'$ is simple
if there is an irreducible monic polynomial 
$f \in R[t]$ such that $R' = {R[t]}/{(f(t))}$.

Let  $X = \Spec(R)$ and  $\Sigma$ the set of all maximal ideals of R.
Let $Z \subset X \times \A^1_k \times \square^{n-1}$ be an irreducible 
admissible relative 0-cycle.
Recall from \cite[Definition~3.4]{KP-2} that 
$Z$ is called an sfs-cycle if the following hold.
\begin{enumerate}
\item
$Z$ intersects $\Sigma \times \A^1_k \times F$ properly for all faces 
$F \subset \square^{n-1}$.
\item
The projection $Z \to X$ is finite and surjective.
\item
$Z$ meets no face of $X \times \A^1_k \times  \square^{n-1}$.
\item
$Z$ is closed in $X \times \A^1_k \times \A^{n-1}_k = 
\Spec(R[t, y_1, \ldots , y_{n-1}])$ 
(by (2) above) and there is a sequence of simple extensions of
regular semi-local rings
\[
R = R_{-1} \subset R_0 \subset \cdots \subset R_{n-1} = k[Z]
\]
such that $R_0 = {R[t]}/{(f_0(t))}$ and $R_i = 
{R_{i-1}[y_i]}/{(f_i(y_i))}$ for $1 \le i \le n-1$.
\end{enumerate}
Note that an sfs-cycle has no boundary by (3) above.
We let $\Tz^n _{\sfs}(R, n;m) \subset \Tz^n(R, n;m)$ be the 
subgroup of cycles whose irreducible components are
sfs-cycles and define
\begin{equation}\label{eqn:sfs}
\TCH^n _{\rm sfs} (R, n; m) = \frac{\Tz^n _{\sfs}(R, n;m)}
{\partial(\Tz^n (R, n+1;m)) \cap \Tz^n _{ \sfs} (R, n;m)}.
\end{equation}

It is clear that the canonical map $\TCH^n _{\rm sfs} (R, n; m) \to \TCH^n(R, n; m)$ is injective.
The following result from \cite[Theorem~1.1]{KP-3} says more.

\begin{thm}\label{thm:sfs-lemma}
The canonical map $\TCH^n _{\rm sfs} (R, n; m) \to \TCH^n(R, n; m)$ 
is an isomorphism if $k$ is infinite and perfect.
\end{thm}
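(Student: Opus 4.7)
My plan is to separate the two directions of the claimed bijection. The injectivity is essentially automatic from the definitions: if an sfs cycle $Z$ maps to zero in $\TCH^n(R,n;m)$, then $Z = \partial W$ for some $W \in \Tz^n(R,n+1;m)$, so $Z$ already lies in the subgroup $\partial(\Tz^n(R,n+1;m)) \cap \Tz^n_{\sfs}(R,n;m)$ by which one quotients to form $\TCH^n_{\sfs}(R,n;m)$. Hence $[Z] = 0$ in the latter group. Thus the real content is the surjectivity: every class in $\TCH^n(R,n;m)$ admits an sfs representative.

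For surjectivity, my plan is a moving argument. Given an integral admissible cycle $Z \in \ker \partial \subset \Tz^n(R,n;m)$, its coordinate ring $k[Z]$ is a finite $R$-algebra generated by the images of $t, y_1, \ldots, y_{n-1}$. The obstruction to $Z$ being sfs is that the intermediate subrings $R \subset R[\bar t] \subset R[\bar t, \bar y_1] \subset \cdots$ may fail to be a tower of simple extensions of regular semi-local rings of the required form. I intend to construct a $\square$-homotopy $W \in \Tz^n(R,n+1;m)$, parameterized by a new cube coordinate $y_n$, such that $\partial W = Z - Z'$ with $Z'$ sfs; the natural candidate is a translation family of the type $y_i \mapsto y_i + y_n \cdot a_i$ for suitably chosen parameters $a_i \in R$, extended projectively in the $y_n$-direction.

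The key technical ingredient will be a Bertini-type genericity argument showing that the locus of ``bad'' translation directions $(a_i) \in \A^{n-1}_R$ is a proper closed subset at each maximal ideal of $R$. Since $R$ is semi-local with only finitely many maximal ideals, the Chinese remainder theorem will let me find a single direction $(a_i) \in R^{n-1}$ avoiding the bad locus at every maximal ideal simultaneously; this yields a $Z'$ whose residue algebra decomposes as the desired tower of simple extensions over $R$. An iteration of this procedure, applied coordinate by coordinate, produces the sfs representative.

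The main obstacle will be preserving admissibility of the family $W$: both the modulus condition $\nu^*(X \times \{0\} \times \ov{\square}^{n}) \le \nu^*(X \times \A^1 \times F^1_{n})$ on the closure and proper intersection with every codimension-one face (including the new face at $y_n = 1$) must hold throughout the family. This severely restricts which substitutions one may perform --- in particular the $\A^1$-coordinate $t$ cannot be translated without violating the modulus, so the entire moving must take place in the $y$-directions only. Carrying out this delicate moving argument in full detail is precisely the content of the sfs-moving lemma of Krishna--Park, which the authors invoke here as \cite[Theorem~1.1]{KP-3}.
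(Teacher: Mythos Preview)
The paper does not prove this theorem at all; it simply records it as a citation of \cite[Theorem~1.1]{KP-3} and moves on. Your proposal ends at the same place --- deferring the substance to the same reference --- so in that sense you match the paper exactly.

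Your added observation that injectivity is tautological from the definition~\eqref{eqn:sfs} is correct and worth stating. Your heuristic sketch of the surjectivity (moving via linear translation families in the $y$-coordinates, avoiding bad loci at finitely many maximal ideals via the Chinese remainder theorem, while keeping the $t$-coordinate fixed to preserve the modulus condition) captures the flavor of an sfs-moving lemma, but you correctly stop short of claiming this as a proof and instead invoke \cite{KP-3}. That is appropriate: the actual argument in \cite{KP-3} is substantially more intricate than a single linear translation, and the admissibility constraints you flag (modulus plus proper face intersection for the homotopy $W$) are genuinely the hard part. So nothing is wrong here, but be aware that your sketch is a plausibility outline rather than an indication of how the proof in \cite{KP-3} actually runs.
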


\subsection{The cycle class map to Quillen $K$-theory}\label{sec:CCM-R-0}
The construction of the map $cyc_R$ for $\TCH^n _{\rm sfs} (R, n; m)$ is obtained by word by 
word repetition of the construction of the cycle class map for fields described in
\S~\ref{sec:CCM}. 
So let $Z \subset X \times \A^1_k \times  \square^{n-1}$ be an 
irreducible sfs-cycle and let $R' = k[Z]$.
Let $f \colon Z \to X \times \A^1_k$ be the projection map.
Let $g_i \colon Z \to \square$ denote the $i$-th projection.
Then the sfs property implies that each $g_i$ defines an element of 
${R'}^{\times}$ and this in turn
gives a unique element $
\{g_1, \ldots , g_{n-1}\} 
\in K^M_{n-1}(R')$.
We let $cyc_{R'}([Z])$ be its image in $K_{n-1}(R')$ under the map
$K^M_{n-1}(R') \to K_{n-1}(R')$.
Since $Z$ does not meet $X \times \{0\}$, we see that the finite map $f$ 
defines a push-forward map of spectra $f_* \colon K(R') \to K(R[t], (t^{m+1}))$.
We let $cyc_R([Z]) = f_*(cyc_{R'}([Z])) \in K_{n-1}(R[t], (t^{m+1}))$.
We extend this definition linearly to get a cycle map $cyc_R
\colon  \Tz^n _{ \sfs} (R, n;m) \to K_{n-1}(R[t], (t^{m+1}))$.

\begin{lem}\label{lem:CCM-R-0}
The assignment $[Z] \mapsto cyc_R([Z])$ defines a cycle class map
\[
cyc_{R} \colon \TCH^n_{\rm sfs}(R, n; \lambda(pm)) \to K_{n-1}(R[t], (t^{m+1}))
\]
which is functorial for the inclusion $R \inj F$.
\end{lem}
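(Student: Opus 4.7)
The plan is to reduce the well-definedness of $cyc_R$ on $\TCH^n(R,n;\lambda(pm))$ to the corresponding, already-known statement over the total ring of quotients $F$, and then pull the vanishing back to $R$ using \lemref{lem:K-inj}. Since $cyc_R$ is already constructed at the cycle level on $\Tz^n_{\sfs}$, what remains is descent to the Chow group together with functoriality along the inclusion $R\hookrightarrow F$. Functoriality is immediate from the construction: for an irreducible sfs-cycle $Z$, the ring $R'=k[Z]$ is built from $R$ as a tower of simple finite extensions, so $R'\otimes_R F$ decomposes as a product of residue fields of the irreducible components of $Z_F$, and both the Milnor symbol $\{g_1,\dots,g_{n-1}\}$, its image in Quillen $K$-theory, and the finite pushforward $f_*$ are compatible with this decomposition.

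For the descent to the Chow group, suppose $\alpha\in\Tz^n(R,n+1;\lambda(pm))$ with $\partial\alpha\in\Tz^n_{\sfs}(R,n;\lambda(pm))$; the goal is to show $cyc_R(\partial\alpha)=0$ in $K_{n-1}(R[t],(t^{m+1}))\cong\wt{K}_n(R_m)$. The construction of $cyc_R([Z])$ only requires that $Z$ miss $X\times\{0\}$ and is thus insensitive to the particular target relative $K$-theory level. Using this flexibility, the same recipe produces an intermediate element $\xi:=cyc_R(\partial\alpha)\in\wt{K}_n(R_{pm})$ whose image under the canonical restriction $\wt{K}_n(R_{pm})\to\wt{K}_n(R_m)$ is precisely the class to be shown zero. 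By functoriality of the cycle map in $R$, the image of $\xi$ in $\wt{K}_n(F_{pm})$ coincides with $cyc_F(\partial\alpha_F)$. The field case \eqref{eqn:CCM-pro}, applied with $m$ replaced by $pm$, factors through $\TCH^n(F,n;\lambda(pm))\to\wt{K}_n(F_{pm})$, so $cyc_F(\partial\alpha_F)=0$.

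Therefore $\xi\in\mathrm{Ker}(\wt{K}_n(R_{pm})\to\wt{K}_n(F_{pm}))$, and the ``in particular'' clause of \lemref{lem:K-inj} says exactly that the image of this kernel in $\wt{K}_n(R_m)$ vanishes; this yields $cyc_R(\partial\alpha)=0$, as required. The subtle step in the plan is verifying the compatibility of $cyc_R$ with the base change $R\hookrightarrow F$, since the construction mixes a finite pushforward along a possibly non-smooth cover with the Milnor-to-Quillen map; however, the sfs-structure presents $R'$ as a tower of simple extensions of $R$, which keeps the pushforward and the Milnor symbol well behaved under localization at the nonzerodivisors of $R$ and reduces this compatibility to a routine check.
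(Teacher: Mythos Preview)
Your proposal is correct and follows essentially the same strategy as the paper: factor through the intermediate level $pm$ (i.e., land first in $\wt{K}_n(R_{pm})\cong K_{n-1}(R[t],(t^{pm+1}))$), use functoriality along $R\hookrightarrow F$ together with the already-established field case at level $\lambda(pm)$ to kill the image over $F$, and then invoke the ``in particular'' clause of \lemref{lem:K-inj} to conclude vanishing over $R$ at level $m$. The only difference is that the paper packages the functoriality of $cyc_R$ under $\pi^*\colon R\hookrightarrow F$ by citing \cite[Theorem~6.2]{GK} for the commutativity of the relevant squares, whereas you sketch it directly from the sfs tower decomposition of $R'$; both are acceptable, though you should be aware that this compatibility is where the nontrivial content (interaction of finite pushforward with flat pullback) lives, and the paper's citation is doing real work there.
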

\begin{proof}
  Let $\pi \colon \Spec(F) \inj \Spec(R)$ be the inclusion.
  We consider the diagram
  \begin{equation}\label{eqn:CCM-R-0-0}
    \xymatrix@C.4pc{
      \partial^{-1}(\Tz^n _{ \sfs} (R, n; \lambda(pm)))
\ar[r]^-{\partial} \ar[d]_-{\pi^*} & 
\Tz^n _{ \sfs} (R, n; \lambda(pm)) \ar[d]^-{\pi^*}
\ar[r]^-{cyc_R} & K_{n-1}(R[t], (t^{pm+1})) \ar[d]^-{\pi^*} \ar[r] &
K_{n-1}(R[t], (t^{m+1})) \ar[d]^-{\pi^*} \\
      \Tz^n(F, n+1;\lambda(pm)) \ar[r]^-{\partial} & 
      \Tz^n(F, n; \lambda(pm)) \ar[r]^-{cyc_F} & K_{n-1}(F[t], (t^{pm+1})) \ar[r] &
      K_{n-1}(F[t], (t^{m+1})),}
  \end{equation}
  where the horizontal arrows in the square on the right are the structure maps of
  the pro-abelian group $\left\{K_{n-1}(R[t], (t^{m}))\right\}_{m \ge 1}$ (and for $F$) because
  $mp \ge m$. In particular, this square is commutative.
It was shown in \cite[Theorem~10.2]{GK} that all the other squares are commutative.
It follows from the case of fields (see ~\eqref{eqn:CCM-add}) that the composite map
$cyc_F \circ \partial \circ \pi^*$ is zero. We deduce from \lemref{lem:K-inj} that
the composite $cyc_R \circ \partial$ is zero. It follows that
the composition of all horizontal arrows in the top row of
~\eqref{eqn:CCM-R-0-0} is zero. This proves the lemma.
\end{proof}

Since the map $cyc_k$ is clearly functorial in $m \ge 1$, using the natural isomorphism
$\partial \colon \wt{K}_n(R_m) \xrightarrow{\cong} K_{n-1}(R[t], (t^{m+1}))$, we get
\begin{thm}\label{thm:Cycle-R}
  For every $n \ge 1$, there is a cycle class map between pro-abelian groups
  \[
    cyc_R \colon \left\{\TCH^n_{\rm sfs}(R,n;m)\right\}_{m} \to \left\{\wt{K}_n(R_m)\right\}_m
  \]
  which is functorial for the inclusion $R \inj F$ and coincides with ~\eqref{eqn:CCM-pro}
  if $R$ is a field.
\end{thm}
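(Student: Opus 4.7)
The theorem is essentially a packaging of \lemref{lem:CCM-R-0} into the language of pro-abelian groups, so the proof is short. The plan is as follows. For each $m \ge 1$, \lemref{lem:CCM-R-0} yields a homomorphism
\[
cyc_R \colon \TCH^n(R, n; \lambda(pm)) \to K_{n-1}(R[t], (t^{m+1})).
\]
I invoke the Bass-Thomason-Trobaugh localization fiber sequence $K(R[t], (t^{m+1})) \to K(R[t]) \to K(R_m)$ together with $\A^1$-homotopy invariance of $K$-theory for the regular scheme $\Spec(R[t])$ to conclude that the connecting homomorphism $\partial \colon \wt{K}_n(R_m) \xrightarrow{\cong} K_{n-1}(R[t], (t^{m+1}))$ is a natural isomorphism. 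Composing with $\partial^{-1}$ I obtain maps $\TCH^n(R, n; \lambda(pm)) \to \wt{K}_n(R_m)$.

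Next I would check that this family, as $m$ varies, defines a morphism in the pro-category in the sense of \eqref{eqn:Pro-1}, with associated function $m \mapsto \lambda(pm)$. Concretely, for $m' \ge m$ I must verify that the square obtained by the restriction $\TCH^n(R, n; \lambda(pm')) \to \TCH^n(R, n; \lambda(pm))$, the two instances of $cyc_R$, and the canonical map $K_{n-1}(R[t], (t^{m'+1})) \to K_{n-1}(R[t], (t^{m+1}))$ commutes. This is immediate from the construction in \S\ref{sec:CCM-R-0}: for an sfs cycle $Z$, the finite push-forward $f_* \colon K(k[Z]) \to K(R[t], (t^{m+1}))$ factors through $K(R[t], (t^{m'+1}))$, the Milnor symbol $\{g_1, \ldots , g_{n-1}\} \in K_{n-1}(k[Z])$ is intrinsic to $Z$ (depending on no modulus level), and $\partial$ is strictly compatible with the structure maps.

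Functoriality for the inclusion $R \inj F$ is inherited from \lemref{lem:CCM-R-0} once I observe that the connecting isomorphism $\partial$ is natural with respect to the flat extension $R \inj F$. For the specialization $R = k$, every irreducible sfs cycle is a closed point $z \in \Spec(k) \times \A^1_k \times \square^{n-1}$ with residue field $k(z)$ finite over $k$, and unwinding the definitions shows that the Milnor symbol $\{g_1(z), \ldots , g_{n-1}(z)\} \in K_{n-1}(k(z))$ pushed forward along $f \colon \Spec(k(z)) \to \Spec(k[t])$ is precisely the recipe used in \S\ref{sec:CCM} to define ~\eqref{eqn:CCM-add}. Applying $\partial^{-1}$ then yields exactly ~\eqref{eqn:CCM-pro}, as required.

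There is no serious obstacle; the only point requiring care is the pro-category bookkeeping and the verification that the reindexing $m \mapsto \lambda(pm) = n(pm+1)-1$, being strictly increasing, plays no essential role in the pro-object obtained on the source. All substantive work, in particular the vanishing needed to make $cyc_R \circ \partial = 0$ at the level of cycles, has already been carried out in \lemref{lem:K-inj} and \lemref{lem:CCM-R-0}.
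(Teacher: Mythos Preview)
Your proposal is correct and matches the paper's approach exactly: the paper derives \thmref{thm:Cycle-R} in a single sentence from \lemref{lem:CCM-R-0} together with the natural isomorphism $\partial \colon \wt{K}_n(R_m) \xrightarrow{\cong} K_{n-1}(R[t], (t^{m+1}))$ and the evident functoriality in $m$. You have simply spelled out the pro-compatibility and the field specialization that the paper leaves implicit.
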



\section{End of the proof of \thmref{thm:Main-0} }\label{sec:Proofs*}
We shall now complete the proof of \thmref{thm:Main-0} by proving its remaining part (4).
After \S~\ref{sec:Proofs**}, the key lemma
that remains to be proven for this purpose is \lemref{lem:Gen-commute}.
We shall prove this in few steps.
We let our ring $R$ and other notations be the same as in \S~\ref{sec:CCM-R*}.
Since $R$ is a product of integral domains and our proofs for the case
of integral domains directly generalize to products of such rings, we shall assume that
$R$ is a regular semi-local integral domain.

Hence the standing assumption of this section is
that $R$ is a regular semi-local integral domain which is
essentially of finite type over a field $k$ of characteristic $p > 0$.
We let $F$ denote the fraction field of $R$.
Recall from \corref{cor:Milnor-Quillen-map} that we have a well-defined map $\psi_R: \{\wh{K}^M_*(R_m) \}_m \to \{K_*(R_m)\}_m$. If $R$ is a field, then this map is induced by 
 the canonical map  $\psi_{R_m} \colon \wh{K}^M_*(R_m) \to K_*(R_m)$ from Milnor to Quillen $K$-theory.

\subsection{The case of fields}\label{sec:Fields*}
We first consider the case when $R$ is a field. So we let $k$ be a field of characteristic $p > 0$.
We fix an integer $n \ge 1$ and consider the diagram
\begin{equation}\label{eqn:Key-diagram}
  \xymatrix@C.8pc{
    \left\{\TCH^n(k,n;m)\right\}_m \ar[r]^-{cyc_k} \ar[d]_-{cyc^M_k} & 
    \left\{K_{n-1}(\A^1_k, (m+1)\{0\})\right\}_m \\
    \left\{\wt{K}^M_n(k_m)\right\}_m \ar[r]^-{\psi_{k_m}} & \left\{\wt{K}_n(k_m)\right\}_m
    \ar[u]_-{\partial},}
\end{equation}
where $cyc_k$ is the map of ~\eqref{eqn:CCM-pro}.
\vskip .3cm

Our goal is to show that this diagram is commutative. We shall use the shortened notation
$\psi_k$ for $\psi_{k_m}$ even if it is meant to be used for $k_m$ for different values of $m \ge 1$ in different parts of the proofs.

\begin{lem}\label{lem:Key-lem-0}
  The diagram ~\eqref{eqn:Key-diagram} is commutative for $n =1$. 
\end{lem}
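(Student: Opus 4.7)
The plan is to verify commutativity of \eqref{eqn:Key-diagram} at each level $m$ on explicit generators, using that all the maps involved are strict when $n=1$ (the shift $\lambda(m) = n(m+1)-1$ collapses to $m$). By the Chow-Witt isomorphism $\tau_k \colon \W_m(k) \xrightarrow{\cong} \TCH^1(k,1;m)$ of \eqref{eqn:Witt-C}, and since $\W_m(k)$ is generated as an abelian group by Witt vectors with a single nonzero entry, $\TCH^1(k,1;m)$ is generated by cycles $Z_{a,i} := V(1-at^i) \subset \A^1_k$ for $a \in k$ and $1 \le i \le m$.

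Tracing the bottom route, I would unwind $cyc^M_k = \theta_k \circ \lambda_k \circ \tau_k^{-1}$: by \thmref{thm:RS-main}, $\lambda_k \circ \tau_k^{-1}([Z_{a,i}])$ equals the Milnor symbol $\{1-at^i\} \in \wh K^M_1(A|I)/\wh K^M_1(A|I^{m+1})$, which $\theta_k$ sends to $\ov{1-at^i} \in \wt K_1^M(k_m) \subset k_m^{\times}$. The map $\psi_k$ is the canonical Milnor-to-Quillen inclusion, so applying $\partial$ gives the bottom-route output $\partial([1-at^i]) \in K_0(\A^1_k,(m+1)\{0\})$.

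For the top route, by the construction of the cycle class map in \secref{sec:CCM} with $n=0$ (so that the only Milnor symbol involved is the trivial $1 \in K_0(k(Z_{a,i}))=\Z$), $cyc_k([Z_{a,i}])$ is the push-forward of $1$ along the finite closed immersion $Z_{a,i} \hookrightarrow \A^1_k$, giving $[\sO_{Z_{a,i}}] = [k[t]/(1-at^i)]$. This lies in the relative $K$-group because $\sO_{Z_{a,i}} \otimes_{k[t]} k_m = 0$, since $1-at^i$ is a unit in $k_m$.

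The proof therefore reduces to the identity $\partial([1-at^i]) = [\sO_{Z_{a,i}}]$ in $K_0(\A^1_k,(m+1)\{0\})$, which is the classical description of the boundary map in the fiber sequence $K(\A^1_k,(m+1)\{0\}) \to K(\A^1_k) \to K(k_m)$: the short exact sequence
\[
0 \to k[t] \xrightarrow{\,\cdot(1-at^i)\,} k[t] \to \sO_{Z_{a,i}} \to 0
\]
of finitely generated projective $k[t]$-modules becomes the automorphism multiplication-by-$(1-at^i)$ of $k_m$ after reduction modulo $t^{m+1}$, and the definition of $\partial$ via mapping cones identifies the unit class $[1-at^i] \in K_1(k_m)$ with the relative class $[\sO_{Z_{a,i}}]$. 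The main obstacle will be careful sign-tracking to confirm that the push-forward convention used to define $cyc_k$ in \cite{GK} and the boundary convention used here produce the same sign on the nose; this can be verified by testing against a single trivial case (e.g., $a=0$, where both sides vanish) or, more robustly, by matching the construction of $cyc_k$ directly with the mapping-cone description of $\partial$.
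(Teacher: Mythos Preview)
Your proof is correct and follows essentially the same approach as the paper's. Both arguments observe that for $n=1$ all maps are strict and reduce the commutativity to the single identity $\partial([f(t)]) = [k[T]/(f(T))]$ in $K_0(\A^1_k,(m+1)\{0\})$; the paper invokes \cite[Lemma~2.1]{GK} for this, while you spell it out via the mapping-cone description of $\partial$ applied to the resolution $0 \to k[t] \xrightarrow{f} k[t] \to k[t]/(f) \to 0$. The only cosmetic difference is that you work with the additive generators $V(1-at^i)$ of $\W_m(k)$ and extend by linearity, whereas the paper takes an arbitrary $f(T)=1+Tp(T)$ representing $\gamma(w)$; both amount to the same verification.
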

\begin{proof}
It follows from ~\eqref{eqn:CCM-pro} and ~\eqref{eqn:Milnor-0-1} that all maps in
  ~\eqref{eqn:Key-diagram} are strict maps of pro-abelian groups, i.e.,
  the associated function $\lambda \colon \Z_+ \to \Z_+$ is identity
  (see \S~\ref{sec:Pro}).
  Furthermore, it was shown in the initial part of the proof of \cite[Proposition~5.1]{GK}
  that $cyc_k$ is a level-wise  isomorphism.
 It follows from ~\eqref{eqn:Witt-vec} and ~\eqref{eqn:Witt-C} that all other maps 
 are also level-wise isomorphisms. Clearly, all these are functorial in $k$.
 
Finally, to show that ~\eqref{eqn:Key-diagram} commutes level-wise for $n =1$, let
 $w \in \W_m(k)$ and let $f(T) = 1 + Tp(T) \in k[T]$ be a polynomial such that
$\gamma(w) = f(T)$ modulo $T^{m+1}$.
   The construction of the cycle class map in \S~\ref{sec:CCM} then shows that
   $cyc_k(\gamma(w))$ is the class of the finitely generated $k[T]$-module
   ${k[T]}/{(f(T))}$ in $K_0(\A^1_k, (m+1)\{0\})$ (see \cite[\S2C]{GK}).
   Since $\tau_k$ is an isomorphism, it suffices to show that this class coincides with
   $\partial(\gamma(w))$. But this follows from \cite[Lemma~2.1]{GK}.
   This proves the lemma and also proves  
  stronger versions of Theorems~\ref{thm:Main-0} and ~\ref{thm:Main-1}
  when $n =1$.
\end{proof}

\enlargethispage{20pt}

Our next goal is to prove the commutativity of ~\eqref{eqn:Key-diagram} when $n \ge 2$.
We let $n \ge 2$ and let
  $\lambda_n \colon \Z_+ \to \Z_+$ be given by $\lambda_n(m) = n(m+1) -1$.
  It is then easy to see using ~\eqref{eqn:CCM-pro} and ~\eqref{eqn:Milnor-0-1}
  that all maps in ~\eqref{eqn:Key-diagram} are morphisms of pro-abelian groups
  all of whose associated functions are same, namely, the function $\lambda_n$ above
  (note that this requires $n \ge 2$).
Moreover, for $m' \ge m$, the diagram ~\eqref{eqn:Pro-1} already commutes when $l = \lambda_n(m')$.
In the proofs below,
we shall write $\lambda_n(m)$ simply as $\lambda(m)$ since $n$ is fixed.

To prove that the diagram ~\eqref{eqn:Key-diagram} is commutative for $k$ and $n \ge 2$,
it suffices therefore to show that for every $m \ge 1$, the square on the right in the diagram
\begin{equation}\label{eqn:Key-lem-0-1}
\xymatrix@C.8pc{
  \W_{\lambda(m)}\Omega^{n-1}_k \ar[r]^-{\tau_k} \ar[dr]_-{\theta_k \circ \lambda_k} &
  \TCH^n(k,n; \lambda(m)) \ar[r]^-{cyc_k} \ar[d]_-{cyc^M_k} & 
    K_{n-1}(\A^1_k, (m+1)\{0\}) \\
    & \wt{K}^M_n(k_m) \ar[r]^-{\psi_{k_m}} & \wt{K}_n(k_m)
    \ar[u]_-{\partial}}
\end{equation}
is commutative.
Since $\tau_k$ is an isomorphism, this is equivalent to showing that the outer
trapezium is commutative.

To show the commutativity of the trapezium, we shall use 
\propref{prop:HK-RS} for fields (due to
Hyodo-Kato \cite{HK} and R{\"u}lling-Saito \cite{RS}). 
Using this, it suffices to show that the above diagram
commutes for the generators of the two groups on the left hand side of
~\eqref{eqn:HK-main-0}.

  We know from ~\eqref{eqn:Witt-vec} that any element $w \in \W_{\lambda(m)}(k)$ is of the form
  $\gamma^{-1}(\ov{1 - Tp(T)})$ with $p(T) \in k[T]$ and $\ov{f(T)} = f(T)$ modulo $T^{\lambda(m) +1}$.
    Since we can write $1 - Tp(T)$ as a product of irreducible
  polynomials of the form $1 - Tq(T)$, we see that $w$ is a sum of elements of the form
  $\gamma^{-1}(\ov{1 - Tp(T)})$ such that $1 - Tp(T)$ is irreducible.
  We can therefore assume that $w = \gamma^{-1}(\ov{f(T)})$, where $f(T) = 1 - Tp(T)$ is irreducible.

  In what follows below, we write $\phi_k = \theta_k \circ \lambda_k$ and $\psi_{k_m} = \psi_k$ to simplify
  the notations, where the value of $m \ge 1$ is allowed to vary.  We also write
  $\ov{f(T)} = f(t)$ in any $k_m$. We let $A = k[T]_{(T)}$.

  \begin{lem}\label{lem:Key-lem-1}
    For $n \ge 2$, we have
    \[
      \partial \circ \psi_k \circ \phi_k(w\dlog[a_1] \cdots \dlog[a_{n-1}]) =
      cyc_k' \circ \tau_k(w \dlog[a_1] \cdots \dlog[a_{n-1}]).
      \]
  \end{lem}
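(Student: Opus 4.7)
The plan is to unwind both sides explicitly and compare. By Theorem~\ref{thm:RS-main} together with the isomorphism $\theta_k$ of \eqref{eqn:Milnor-1}, the left hand side equals
\[
\partial\bigl(\psi_k(\{f(t), a_1, \ldots, a_{n-1}\})\bigr) \in K_{n-1}(\A^1_k, (m+1)\{0\}),
\]
where $f(t) \in k_m^{\times}$ is the reduction of $f(T)$ modulo $(t^{m+1})$ (a unit since $f(0)=1$). On the right hand side, \eqref{eqn:Witt-C-0} gives $\tau_k(w\dlog[a_1]\cdots\dlog[a_{n-1}]) = V(f(T), y_1-a_1, \ldots, y_{n-1}-a_{n-1})$, an irreducible sfs 0-cycle whose coordinate ring is $k(\alpha) := k[T]/(f(T))$ and whose $y_i$-coordinates are the constants $a_i \in k^{\times}$. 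Writing $g \colon \Spec k(\alpha) \to \A^1_k$ for the associated finite morphism, which avoids $\{0\}$ because $f(0) \neq 0$, the construction of $cyc_k$ in \S\ref{sec:CCM-R-0} identifies the right hand side with the pushforward $g_*(\{a_1,\ldots,a_{n-1}\})$.

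The next step is a multiplicative decomposition of the symbol. In $\wh{K}^M_n(k_m)$ we can write $\{f(t), a_1, \ldots, a_{n-1}\} = \{f(t)\} \cdot \beta$, where $\beta$ is the image of $\{a_1,\ldots,a_{n-1}\} \in K^M_{n-1}(k)$ under the splitting $k \hookrightarrow k_m$ of the augmentation. Since $\psi_k$ is a ring homomorphism (see \eqref{eqn:Relation}) and is the identity on $K_1$, we get $\psi_k \circ \phi_k(w\dlog[a_1]\cdots\dlog[a_{n-1}]) = [f(t)] \cdot \psi_k(\beta)$ in $K_n(k_m)$.

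The crux is the module-theoretic compatibility of $\partial$. The homotopy fiber sequence $K(\A^1_k, (m+1)\{0\}) \to K(\A^1_k) \to K(k_m)$ is naturally a sequence of $K(k)$-module spectra, so $\partial$ is $K_*(k)$-linear. Since $\psi_k(\beta)$ lifts to $K_{n-1}(k)$ via $k \hookrightarrow k_m$, this gives
\[
\partial\bigl([f(t)] \cdot \psi_k(\beta)\bigr) = \partial([f(t)]) \cdot \psi_k(\beta).
\]
Lemma~\ref{lem:Key-lem-0} (the $n=1$ case, which rests on \cite[Lemma~2.1]{GK}) identifies $\partial([f(t)]) = [k[T]/(f(T))] = g_*(1) \in K_0(\A^1_k, (m+1)\{0\})$. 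Applying the projection formula for the finite morphism $g$, together with the observation that $g^*(a_i) = a_i$ because each $a_i$ lies in $k$, yields
\[
g_*(1) \cdot \psi_k(\beta) = g_*\bigl(g^* \psi_k(\beta)\bigr) = g_*\bigl(\{a_1, \ldots, a_{n-1}\}\bigr),
\]
which is precisely the right hand side.

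The main obstacle is the rigorous justification of the $K_*(k)$-linearity of $\partial$ and of the projection formula at the level of the relative $K$-theory spectrum. For this I would appeal to \cite[Proposition~3.18]{TT} to promote $K(\A^1_k, (m+1)\{0\})$ to a $K(\A^1_k)$-module spectrum (using that the restriction $K(\A^1_k) \to K(k_m)$ is compatible with pairings against perfect complexes), and then deduce both the linearity of $\partial$ and the projection formula for $g_*$ from the standard spectra-level compatibilities, in the same spirit as the char $0$ arguments of \cite{GK}.
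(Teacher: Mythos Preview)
Your proof is correct and follows essentially the same route as the paper: both unwind $\phi_k$ to the Milnor symbol $\{f(t),a_1,\ldots,a_{n-1}\}$, use the $K_*(k)$-linearity of $\partial$ together with the $n=1$ case (\lemref{lem:Key-lem-0}) to reduce to $\pi_*(1)\cdot\psi_k(\{a_1,\ldots,a_{n-1}\})$, and then invoke the projection formula for the finite push-forward (the paper cites \cite[Lemma~2.2]{GK} for this, which is exactly the spectra-level argument you sketch in your last paragraph).
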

  \begin{proof}
    With the above notations, we have
  \begin{equation}\label{eqn:Key-lem-0-3}
  \begin{array}{lll}
    \partial \circ \psi_k \circ \phi_k(w \dlog[a_1] \cdots \dlog[a_{n-1}])
    & {=}^1 & \partial \circ \psi_k(\{\gamma(w), a_1, \ldots , a_{n-1}\}) \\
    & = & \partial \circ \psi_k(\{1- tp(t), a_1, \ldots , a_{n-1}\}) \\
    & {=}^2 &( \partial \circ \psi_k(\{1 - tp(t)\})) \cdot \psi_k(\{a_1, \ldots , a_{n-1}\}) \\
    & {=}^3 & \pi_*(1) \cdot \psi_k(\{a_1, \cdots , a_{n-1}\}) \\
    & {=}^4 & \pi_*(\{a_1, \cdots , a_{n-1}\}) \\
    & {=}^5 & cyc_k'(V(f(T), y_1 - a_1, \ldots , y_{n-1} -a_{n-1})) \\
    & {=}^6 & cyc_k' \circ \tau_k(w \dlog[a_1] \cdots \dlog[a_{n-1}]), \\
  \end{array}
\end{equation}
where $\pi \colon \Spec({k[T]}/{(f(T))}) \inj \A^1_k$ is the closed immersion.

We explain various equalities. First, $\theta_k$ being the restriction map
$\wh{K}^M_*(A, (T)) \to \wh{K}^M_*(k_m)$ (see ~\eqref{eqn:Milnor-1}), it is clear
that $\theta_k(\{f(T), a_1, \cdots , a_{n-1}\}) =
\{\gamma(w), a_1, \cdots , a_{n-1}\}$, where $f(T)$ is viewed as an element of 
$(1 + (T)) \subset A^{\times}$. The equality ${=}^1$ therefore follows from
the definition of the map $\lambda_k$ in ~\eqref{eqn:RS-main-0}.
The equality ${=}^2$ follows because $\partial$ is a $K^M_*(k)$-linear map.
The equality ${=}^3$ follows from the $n =1$ case shown in 
\lemref{lem:Key-lem-0}
and ${=}^4$ follows because
$\pi_*$ is $K^M_*(k)$-linear (see \cite[Lemma~2.2]{GK}). The equality ${=}^5$ follows
from the definition of the cycle class map in ~\eqref{eqn:CCM-gen} and ${=}^6$ follows from
~\eqref{eqn:Witt-C-0}. This finishes the proof.
\end{proof}

The final step is the following.

\begin{lem}\label{lem:Key-lem-2}
  The diagram~\eqref{eqn:Key-diagram} is commutative for $n \ge 2$.
\end{lem}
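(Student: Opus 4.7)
The plan is to verify the commutativity on the remaining class of generators coming from \lemref{lem:HK-RS} (applied to the field $k$), namely elements of the form $dw\dlog[a_1]\cdots\dlog[a_{n-2}]$ with $w \in \W_{\lambda(m)}(k)$ and $a_i \in k^\times$; the first class has already been handled in \lemref{lem:Key-lem-1}. The verification will mirror the chain of equalities in \lemref{lem:Key-lem-1}.

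By \thmref{thm:RS-main}, the map $\phi_k$ sends $dw\dlog[a_1]\cdots\dlog[a_{n-2}]$ to $(-1)^{n-1}\theta_k(\{\gamma(w), a_1, \ldots, a_{n-2}, T\})$. The key algebraic input is the Steinberg relation $\{1-Tp(T), Tp(T)\} = 0$ in $\wh{K}^M_2(A[T^{-1}])$, valid since $\gamma(w) + Tp(T) = 1$, which gives $\{\gamma(w), T\} = -\{\gamma(w), p(T)\}$. When $p(T) \in A^\times$ (i.e.\ $p(T)$ has invertible constant term), the rewritten symbol $\{\gamma(w), a_1, \ldots, a_{n-2}, p(T)\}$ lies in $\wh{K}^M_n(A, I)$ and is precisely of the form treated in \lemref{lem:Key-lem-1}; that lemma then yields
\[
\partial \circ \psi_k \circ \phi_k(dw\dlog[a_1]\cdots\dlog[a_{n-2}]) = (-1)^n \pi_*(\{a_1, \ldots, a_{n-2}, \overline{p(T)}\}),
\]
where $\pi \colon \Spec(k[T]/(\gamma(w))) \inj \A^1_k$. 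On the cycle-theoretic side, $\tau_k(dw\dlog[a_1]\cdots\dlog[a_{n-2}])$ is to be computed via R\"ulling's explicit formula for the $d$-derivation on $\TCH^\bullet(k, \bullet; m)$ \cite{R}, and the push-forward definition of $\cyc_k$ for sfs-cycles will yield exactly the same $\pi$-push-forward, matching the LHS.

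The main obstacle is the case where $p(T)$ has vanishing constant term, which arises precisely when $w$ lies deeper in the Verschiebung filtration (e.g.\ $w = V_i([a])$ with $i \ge 2$). In this situation we decompose $\gamma(w) = \prod_i(1 - a_i T^i)$ via the Witt-vector expansion and treat each factor separately. The Steinberg relation $\{1 - a_i T^i, a_i T^i\} = 0$ yields $i \cdot \{1 - a_i T^i, T\} = -\{1 - a_i T^i, a_i\}$; for $p \nmid i$ the scalar $i$ becomes invertible modulo the appropriate pro-system truncation, reducing to the previous case. For $p \mid i$ the direct inversion fails, and we must invoke the full de Rham--Witt calculus, using identities such as $F \circ dV = d$ together with the $\W_m(k)$-module structure on $\TCH^\bullet(k, \bullet; m)$ afforded by the Chow--Witt isomorphism \eqref{eqn:Witt-C}, to descend from a Verschiebung level divisible by $p$ to lower levels and conclude by induction. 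The careful bookkeeping of these Witt-vector identities on both the Milnor $K$-theory side and the cycle side, together with the compatibility of all four vertices of the diagram with the Witt-complex structure, constitutes the main technical content of the proof.
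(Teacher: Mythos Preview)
Your reduction to \lemref{lem:Key-lem-1} has a genuine gap. After the Steinberg manipulation you obtain the symbol $\{\gamma(w), a_1, \ldots, a_{n-2}, p(T)\}$ with $p(T) \in A^{\times}$, and you assert this is ``precisely of the form treated in \lemref{lem:Key-lem-1}.'' It is not: that lemma handles only generators $w\dlog[a_1]\cdots\dlog[a_{n-1}]$ with $a_i \in k^{\times}$, and its proof rests on the $K^M_*(k)$-linearity of $\partial$ together with the projection formula, both of which let one pull the \emph{constants} $a_i$ past $\partial$ and $\pi_*$. Your last slot $p(T)$ depends on $T$, so it lies in $k_m^{\times}$, not $k^{\times}$, and cannot be extracted. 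What remains after pulling out $a_1,\ldots,a_{n-2}$ is the computation of $\partial \circ \psi_k(\{f(t), p(t)\}) \in K_1(\A^1_k, (m+1)\{0\})$, which is not covered by the $n=1$ case and is in fact the entire substance of the lemma.

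The paper proceeds differently and avoids this issue. One first identifies $\tau_k(dw\dlog[a_1]\cdots\dlog[a_{n-2}])$ with the cycle $V(f(T), Ty_1 - 1, y_2 - a_1, \ldots)$, which over the field $l = k[T]/(f(T))$ becomes a rational point with $y_1$-coordinate $\alpha^{-1} \in l^{\times}$. Now \lemref{lem:Key-lem-1} \emph{does} apply over $l$, because $\alpha^{-1}$ is an honest constant there. Compatibility of $\partial$ with the transfer $\pi_*$ reduces everything to the single identity
\[
\psi_k \circ \theta_k(\{1 - Tp(T), T\}) \;=\; -\,\pi_* \circ \psi_l(\{1 - \alpha^{-1}T, \alpha^{-1}\})
\]
in $\wt{K}_2(k_m)$, which is then checked by passing to $K_2(k(T))$ (via \lemref{lem:push-ford}) and computing the norm $N_{l(T)/k(T)}(1 - p(\alpha)T) = 1 - p(T)T$. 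This norm computation is the real content; your Steinberg identity $\{1-Tp(T), T\} = -\{1-Tp(T), p(T)\}$ is one step of it, but does not by itself finish the job. Your treatment of the degenerate case $p(T) \notin A^{\times}$ is also too vague to assess, but note that the paper sidesteps it entirely by assuming $f(T)$ irreducible from the outset (so $p(0) \neq 0$ unless $f$ is linear, and the linear case is immediate).
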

\begin{proof}
  Using the above reductions and \lemref{lem:Key-lem-1}, we only have to show that
  \begin{equation}\label{eqn:Key-lem-2-0}
    \partial \circ \psi_k \circ \phi_k(dw\dlog[a_1] \cdots \dlog[a_{n-2}]) =
      cyc_k \circ \tau_k(dw \dlog[a_1] \cdots \dlog[a_{n-2}]).
    \end{equation}
    
 We shall continue to use the above simplified notations and
 make another simplification by setting $\wt{w} = dw\dlog[a_1] \cdots \dlog[a_{n-2}]$.
 It is clear from the definition of the differential for the Witt-complex structure on the
 additive higher Chow groups (see \cite[\S~6.1]{KP-4})
 that if we let $\gamma(w) = f(t) = 1 - tp(t)$, then
 $\tau_k(dw) = d \tau_k(w)$ is the class of the cycle $V(f(T), Ty_1 - 1) \subset
 \A^1_k \times \square$
 in $\TCH^2(k,2; \lambda(m))$. As $f(T)$ is irreducible,
 $V(f(T), y_1T -1, y_2 - a_1, \ldots , y_{n-1} - a_{n-2})$ is a closed point
 $z \in \A^1_k \times \square^{n-1}$ such that $l = k(z) \cong {k[T]}/{(f(T))}$.
 We therefore have an admissible $l$-rational point
 $z_0 = V(1 - \alpha^{-1}T, y_1 - \alpha^{-1}, y_2 - a_1, \ldots  y_{n-1} - a_{n-2})$
 of $\A^1_l \times_l \square^{n-1}_l$ such that
 $[z] = \pi_*([z_0])$, where we let $\alpha = T$ modulo $(f(T))$ and
 $\pi \colon \Spec(l) \to \Spec(k)$  the projection.

 We can now write
 \[
   \begin{array}{lll}
     cyc_k \circ \tau_k(\wt{w}) & = & cyc_k([z]) \\
 & {=}^1 & \pi_* \circ cyc_l([z_0]) \\
                & = & \pi_* \circ cyc_l(V(1 - \alpha^{-1}T, y_1 - \alpha^{-1}, y_2 - a_1,
                      \ldots , y_{n-1} - a_{n-2})) \\
                & {=}^2 & \pi_* \circ \partial_l \circ \psi_l \circ
                          \phi_l(w_l \dlog[\alpha^{-1}]\dlog[a_1] \cdots \dlog[a_{n-2}]) \\
                & {=}^3 & \partial_k \circ \pi_* \circ \psi_l \circ
                          \phi_l(w_l \dlog[\alpha^{-1}]\dlog[a_1] \cdots \dlog[a_{n-2}]), \\
   \end{array}
   \]
   where $w_l = \gamma^{-1}(1 - \alpha^{-1}T) \in \W_{\lambda(m)}(l)$.
  
The equality ${=}^1$ follows from the construction of the cycle class map
   (see \cite[Lemma~4.4]{GK}), ${=}^2$ follows from \lemref{lem:Key-lem-1} for $\Spec(l)$
   and ${=}^3$ follows because the connecting homomorphism $\partial$ commutes with
   the push-forward map $\pi_*$. Note that this push-forward map exists on the
   relative $K$-theory by ~\eqref{eqn:PF-PB-0}. It suffices therefore to show that
   \begin{equation}\label{eqn:Key-lem-2-1}
     \psi_k \circ \phi_k(\wt{w}) =
     \pi_* \circ \psi_l \circ
     \phi_l(w_l \dlog[\alpha^{-1}]\dlog[a_1] \cdots \dlog[a_{n-2}]).
   \end{equation}
   
 However, we have
  \[
    \begin{array}{lll}
      \psi_k \circ \phi_k(\wt{w}) & = & \psi_k \circ \theta_k \circ \lambda_k(\wt{w}) \\
                     & = & (-1)^{n-1} \psi_k \circ \theta_k(\{\gamma(w), a_1, \ldots , a_{n-2}, T\}) \\
                     & {=}^1 & - \psi_k \circ \theta_k(\{\gamma(w), T, a_1, \ldots , a_{n-2}\}) \\
                                  & = & (- \psi_k \circ \theta_k(\{\gamma(w), T\}))
                                        \cdot \psi_k(\{a_1, \ldots , a_{n-2}\}), \\
    \end{array}
  \]
  where ${=}^1$ follows from \cite[Lemma~ 2.2]{Kerz09} as $k(T)$ is infinite.
  On the other hand, letting $\wt{w_l} = w_l \dlog[\alpha^{-1}]\dlog[a_1] \cdots \dlog[a_{n-2}]$,
  we also have
  \[
    \begin{array}{lll}
\pi_* \circ \psi_l \circ
\phi_l(\wt{w_l}) & = &
  \pi_* \circ \psi_l(\{\gamma(w_l), \alpha^{-1}, a_1, \ldots, a_{n-2}\}) \\
  & = & (\pi_* \circ \psi_l(\{\gamma(w_l), \alpha^{-1}\})) \cdot \psi_k(\{a_1, \ldots , a_{n-2}\}),
    \end{array}
  \]
  where the last equality holds by the projection formula.
  Thus, ~\eqref{eqn:Key-lem-2-1} is reduced to showing that for every $m \ge 1$,
  we have
    \begin{equation}\label{eqn:Key-lem-2-2}
      \psi_k \circ \theta_k(\{1 - Tp(T), T\}) = - \pi_* \circ \psi_l(\{\gamma(w), \alpha^{-1}\}) =
      - \pi_* \circ \psi_l(\{1 - \alpha^{-1}T, \alpha^{-1}\})
      \end{equation}
      in $\wt{K}_2(k_m)$ under the composite map
      $\frac{\wh{K}^M_2(A, (T))}{\wh{K}^M_2(A, (T^{m+1}))} \xrightarrow{\theta_k} \wt{K}^M_2(k_m)
      \xrightarrow{\psi_k} \wt{K}_2(k_m)$.
      Here, $\{1 - Tp(T), T\}$ is viewed as an element of $\wh{K}^M_2(A, (T))$
      via the inclusion
      $\wh{K}_2^M(A|(T)) \subset \wh{K}^M_2(A, (T))$ of \lemref{lem:Milnor-0}.

      The commutative diagram
      \begin{equation}\label{eqn:Key-lem-2-3}
        \xymatrix@C.8pc{
          \frac{\wh{K}^M_2(A, (T))}{\wh{K}^M_2(A, (T^{m+1}))} \ar[r]^-{\theta_k}_-{\cong} \ar[d]_-{\psi_A} &
          \wt{K}^M_2(k_m) \ar[d]^-{\psi_k} \\
          \frac{{K}_2(A, (T))}{{K}_2(A, (T^{m+1}))} \ar[r]^-{\theta_k}_-{\cong} & \wt{K}_2(k_m)}
        \end{equation}
        shows that verifying ~\eqref{eqn:Key-lem-2-2} is equivalent to showing that
\begin{equation}\label{eqn:Key-lem-2-4}
      \theta_k(\{1 - Tp(T), T\}) = - \pi_*(\{1 - \alpha^{-1}T, \alpha^{-1}\})
      \end{equation}
      in $\wt{K}_2(k_m)$ under the map
      $\frac{{K}_2(A, (T))}{{K}_2(A, (T^{m+1}))} \xrightarrow{\theta_k} \wt{K}_2(k_m)$,
      if we use the identical notation for $\{1 -Tp(T), T\} \in 
\wh{K}^M_2(A, (T))$ (resp., $\{1 - \alpha^{-1}T, \alpha^{-1}\} \in 
\wt{K}^M_2(l_m)$) and its image in $K_2(A, (T))$ via $\psi_A$ (resp.,
      in $\wt{K}_2(l_m)$ via  $\psi_l$). We shall use this convention 
      in the rest of the proof.

      To show ~\eqref{eqn:Key-lem-2-4}, we let $A' = l[T]_{(T)}$ as in the notations of
      \lemref{lem:push-ford}.
      We showed in \S~\ref{sec:Rel-K} that $A'$ is finite (and flat) over $A$ and $k_m \otimes_A A' \cong
      l_m$. Using ~\eqref{eqn:PF-PB-0}, we get push-forward maps
      $\pi_* \colon {K}_2(A', (T^i)) \to {K}_2(A, (T^i))$ for all $i \ge 0$ and a commutative diagram
 \begin{equation}\label{eqn:Key-lem-2-5}     
   \xymatrix@C.8pc{
 \frac{{K}_2(A', (T))}{{K}_2(A', (T^{m+1}))} \ar[r]^-{\theta_l}_-{\cong} \ar[d]_-{\pi_*} & \wt{K}_2(l_m)    
 \ar[d]^-{\pi_*} \\
 \frac{{K}_2(A, (T))}{{K}_2(A, (T^{m+1}))} \ar[r]^-{\theta_k}_-{\cong} & \wt{K}_2(k_m).}
\end{equation}
It suffices therefore to show that $\pi_*(\{1 - \alpha^{-1}T, \alpha^{-1}\}) = - \{1 - Tp(T), T\}$
holds under the left vertical arrow 
in ~\eqref{eqn:Key-lem-2-5}.

Since $\{1 - Tp(T), T\} \in K_2(A, (T)) \subset K_2(A)$ and  $\{1 - \alpha^{-1}T, \alpha^{-1}\} \in
K_2(A', (T)) \subset K_2(A')$ (note that these inclusions
use the splitting of $A \surj A/{(T)}$ and $A' \surj {A'}/{(T)}$), 
it suffices to show that
$\pi_*(\{1 - \alpha^{-1}T, \alpha^{-1}\}) = - \{1 - Tp(T), T\}$ in $K_2(A)$.
Using \lemref{lem:push-ford}, we further reduce to showing that this equality holds in $K_2(k(T))$ under
the push-forward map $\pi_* \colon K_2(l(T)) \to K_2(k(T))$.

But in $K_2(k(T))$, we have
\[
  \begin{array}{lll}
    - \pi_*(\{1 - \alpha^{-1}T, \alpha^{-1}\}) & = & \pi_*(\{1 - \alpha^{-1}T, T\}) \\
                                               & {=}^1 & \pi_*(\{1 - p(\alpha)T, T\}) \\
                                               & {=}^2 & N_{{l(T)}/{k(T)}}(\{1 - p(\alpha)T, T\}) \\
                                               & {=}^3 & \left\{N_{{l(T)}/{k(T)}}(1 - p(\alpha)T), T\right\} \\
                                               & {=}^4 & \{1 - p(T)T, T\}. \\
  \end{array}
\]
Here, ${=}^1$ follows because $1 - \alpha p(\alpha) = 0$ in $l$, the equality ${=}^2$ follows by the
compatibility between the Norm in Milnor $K$-theory and push-forward in Quillen $K$-theory
(see the proof of \cite[Lemma~4.4]{GK}), ${=}^3$ follows from the projection formula for norm
as $T \in k(T)^{\times}$, and ${=}^4$ is a straightforward calculation of the norm of
$1 -p(\alpha)T \in l(T)^{\times}$.
This finishes the proof of the lemma.
\end{proof}

\subsection{Back to the case of semi-local ring}\label{sec:SLC}
The following is our last key lemma before we prove the main results.
Let $cyc_R$ be the cycle class map of \thmref{thm:Cycle-R}.
Here, $R$ is the semi-local integral domain satisfying the standing
assumptions of this section.

\begin{lem}\label{lem:Gen-commute}
The diagram
  \begin{equation}\label{eqn:Gen-commute-0}
    \xymatrix@C.8pc{
      \left\{\TCH^n_{\sfs}(R,n;m)\right\}_m \ar[dr]^-{cyc_R} \ar[d]_-{cyc^M_R} & \\
      \left\{\wt{K}^M_n(R_m)\right\}_m \ar[r]^-{\psi_{R}} & \left\{\wt{K}_n(R_m)\right\}_m}
  \end{equation}
  is commutative. Equivalently, $cyc_R = cyc'_R$.
\end{lem}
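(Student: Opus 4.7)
The plan is to reduce the commutativity over $R$ to the already established commutativity over the fraction field $F$, using functoriality of all three maps in ~\eqref{eqn:Gen-commute-0} with respect to the inclusion $\pi \colon \Spec(F) \inj \Spec(R)$, together with the fact that the induced base change map $\pi^* \colon \{\wt{K}_n(R_m)\}_m \to \{\wt{K}_n(F_m)\}_m$ is a monomorphism of pro-abelian groups.

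Functoriality is available for each side. The cycle class map $cyc_R$ to Quillen $K$-theory is functorial for $R \inj F$ by \thmref{thm:Cycle-R}. The Milnor cycle class map $cyc^M_R$ is functorial in $R$ since it is defined as $\theta_R \circ \lambda_R \circ \tau^{-1}_R$, and each factor is functorial (the Chow--Witt isomorphism $\tau_R$ by \cite[Corollary~4.3.4, Lemma~8.1.2]{KP-2}, and $\lambda_R$, $\theta_R$ by their constructions in \thmref{thm:RS-main} and ~\eqref{eqn:Milnor-1}). Finally, $\psi_R$ is natural in $R$ by its construction in \corref{cor:Milnor-Quillen-map}. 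Thus I get a commutative cube in which the front face is ~\eqref{eqn:Gen-commute-0} for $R$ and the back face is ~\eqref{eqn:Key-diagram} for $F$, with the vertical arrows being $\pi^*$.

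The key input is \lemref{lem:K-inj}, which asserts that for every $m \ge 1$ the composition
\[
\mathrm{Ker}\bigl(\wt{K}_n(R_{mp}) \to \wt{K}_n(F_{mp})\bigr) \to \wt{K}_n(R_m)
\]
is zero. In particular, the pro-abelian group $\{\mathrm{Ker}(\pi^* \colon \wt{K}_n(R_m) \to \wt{K}_n(F_m))\}_m$ is zero, so $\pi^*$ is a monomorphism in $\mathrm{Pro}(\mathbf{Ab})$. Post-composing $\psi_R \circ cyc^M_R$ and $cyc_R$ with $\pi^*$, naturality gives
\[
\pi^* \circ (\psi_R \circ cyc^M_R) = \psi_F \circ cyc^M_F \circ \pi^*, \qquad \pi^* \circ cyc_R = cyc_F \circ \pi^*,
\]
and the field case (Lemmas~\ref{lem:Key-lem-0} and \ref{lem:Key-lem-2}) yields $\psi_F \circ cyc^M_F = cyc_F$. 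Hence $\pi^* \circ \psi_R \circ cyc^M_R = \pi^* \circ cyc_R$, and the monomorphism property of $\pi^*$ allows cancellation on the left to conclude $\psi_R \circ cyc^M_R = cyc_R$ as morphisms of pro-abelian groups.

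The main obstacle in this plan is really packaged into \lemref{lem:K-inj}: the pro-injectivity of $\pi^*$ is subtle because in positive characteristic one cannot hope for level-wise injectivity of $\wt{K}_n(R_m) \to \wt{K}_n(F_m)$. Fortunately that lemma is already available, and the Frobenius/Verschiebung identity $F_e V_e = e\cdot\mathrm{id}$ used in its proof is exactly what gives control on the $p$-prime-to-$p$ part in the Hesselholt--Madsen exact sequence. Given this, the rest of the argument is a formal chase through the naturality squares of $cyc^M$, $cyc$, and $\psi$.
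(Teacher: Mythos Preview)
Your proposal is correct and follows essentially the same approach as the paper: reduce to the fraction field $F$ via the functoriality of $cyc_R$, $cyc^M_R$, and $\psi_R$ under $\pi^*$, invoke the field case (Lemmas~\ref{lem:Key-lem-0} and~\ref{lem:Key-lem-2}) to get $\pi^* \circ \psi_R \circ cyc^M_R = \pi^* \circ cyc_R$, and then cancel $\pi^*$ using \lemref{lem:K-inj}. The paper phrases the last step as ``a morphism factoring through the zero pro-group is zero,'' while you phrase it as ``$\pi^*$ is a monomorphism in $\mathrm{Pro}(\mathbf{Ab})$''; these are equivalent.
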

\begin{proof}
  When $R$ is a field, the lemma is equivalent to the commutativity of ~\eqref{eqn:Key-diagram}.
  We now prove the general case. 
  Let $\pi \colon \Spec(F) \inj \Spec(R)$ be the inclusion of the generic point.
  We consider the diagram
  \begin{equation}\label{eqn:Gen-commute-1}
    \xymatrix@C.8pc{
      \left\{\TCH^n_{\sfs}(R,n;m)\right\}_m  \ar[rr]^-{\pi^*} \ar[dd]_-{cyc^M_R} \ar[dr]^-{cyc_R} & &
      \left\{\TCH^n(F,n;m)\right\}_m \ar[dd]^->>>>>{cyc^M_F} \ar[dr]^-{cyc_F} & \\
      & \left\{\wt{K}_n(R_m)\right\}_m \ar[rr]^->>>>>>>{\pi^*} & & \left\{\wt{K}_n(F_m)\right\}_m \\
      \left\{\wt{K}^M_n(R_m)\right\}_m \ar[rr]^-{\pi^*} \ar[ur]^-{\psi_R} & &
      \left\{\wt{K}_n(F_m)\right\}_m \ar[ur]_-{\psi_F}. &}
    \end{equation}

    We check the commutativity of various faces of
    ~\eqref{eqn:Gen-commute-1}.
    The front face clearly commutes and the back face commutes by \thmref{thm:Cycle-R}.
    The right (triangular) face commutes because $F$ is a field.
    The commutativity of the bottom face was shown in the construction of $cyc^M_R$ in
    \S~\ref{sec:CCM-MK}. A diagram chase shows that 
    $\pi^* \circ \psi_R \circ cyc^M_R = \pi^* \circ cyc_R$. We can now apply \lemref{lem:K-inj} to
    conclude that ~\eqref{eqn:Gen-commute-0} commutes. We use here an
elementary fact that if a morphism
    between two pro-abelian groups factors through the zero pro-group, then this morphism itself is zero
    (see \S~\ref{sec:Pro}).
\end{proof}

%
%
%

\vskip .4cm

\noindent\emph{Acknowledgments.}
The first author would like to thank TIFR, Mumbai for invitation in August 2019. 
Parts of this manuscript were worked out 
when the second author was visiting 
the Max Planck Institute for
Mathematics at Bonn in 2019. He thanks the institute for invitation and support.
The authors would like to thank the referee for pointing out corrections and
providing many valuable suggestions
to improve the presentation of the manuscript.

\end{document}